\documentclass[12pt,reqno]{amsart}

\usepackage[english]{babel}

\usepackage{caption}


\usepackage[usenames, dvipsnames, svgnames]{xcolor}
\usepackage{amsmath, amssymb,graphicx,amsthm,latexsym, amsfonts, enumitem, mathtools, tensor, MnSymbol}
\usepackage{hyperref,subfigure}
\usepackage[all, color]{xy}
\usepackage{color}
\usepackage{amssymb, wasysym}
\usepackage{float}
\usepackage{tikz}
\usepackage{tikz-cd}
\usepackage{adjustbox}
\usetikzlibrary{arrows,decorations.pathmorphing,decorations.pathreplacing,positioning,shapes.geometric,shapes.misc,decorations.markings,decorations.fractals,calc,patterns,}
\usepackage{circuitikz}

\usepackage{soul}
\usepackage[normalem]{ulem}
\DeclareMathOperator{\cross }{cross \,}
\DeclareMathOperator{\wt }{wt \,}
\DeclareMathOperator{\module}{mod}

\newcommand{\sg}{\mathcal{G}_{\gamma}}

\newcommand{\La}{\Lambda}
\newcommand{\Ltil}{\widetilde{\Lambda}}

\newcommand{\rectanglepath}{-- +(1cm,0cm)  -- +(1cm,1cm)  -- +(0cm,1cm) -- cycle}

\newcommand{\Loop}{\rotatebox[origin=c]{180}{$\circlearrowright$}}

\newcommand{\ou}[2]{\overset{\text{\large ${#1}$}}{#2}}

\theoremstyle{plain}
\newtheorem{theorem}{Theorem}[section]
\newtheorem*{theorem*}{Theorem}

\theoremstyle{definition}
\newtheorem{definition}[theorem]{Definition}

\newtheorem{example}[theorem]{Example} 
\newtheorem{remark}[theorem]{Remark}
\newtheorem{remarks}[theorem]{Remarks}

\newtheorem{lemma}[theorem]{Lemma}
\newtheorem{notation}[theorem]{Notation}
\newtheorem{corollary}[theorem]{Corollary}

\newtheorem{proposition}[theorem]{Proposition}

\setlength{\textwidth}{165mm}
\setlength{\textheight}{233mm}
\addtolength{\oddsidemargin}{-1.7cm}
\addtolength{\evensidemargin}{-1.7cm}
\addtolength{\topmargin}{-12mm}


\usepackage{color}

\raggedbottom
\date{}

\subjclass[2020]{%
Primary 13F60, 17A70, 16G10; 
Secondary 30F60%
}

\AtEndDocument{\bigskip{\footnotesize%

  \textsc{VU Amsterdam, Department of Mathematics, De Boelelaan 1111, 1081 HV Amsterdam, The Netherlands} \par
  \textit{E-mail address: i.canakci@vu.nl} 
  \vspace{8pt}
  
  \textsc{School of Mathematics, University of Leeds, Leeds, LS2 9JT, United Kingdom} \par
  \textit{E-mail address: f.fedele@leeds.ac.uk} 
 \vspace{8pt}
 
 \textsc{CEMIM Fac. de Cs. Exactas y Naturales UNMdP and CONICET, Dean Funes 3350, Mar del Plata, 7600, Argentina} \par
  \textit{E-mail address:} \textit{elsener@mdp.edu.ar} \par
  \vspace{8pt}
  
\textsc{University of Kentucky, Lexington, Department of Mathematics, 951 Patterson Office Tower, Lexington, KY 40506-0027, USA} \par
  \textit{E-mail address: khrystyna.serhiyenko@uky.edu} 
}}

\begin{document}
\setlength{\parindent}{0pt}
\setlength{\parskip}{7pt}

\title{Super Caldero--Chapoton map for type $A$}
\author{\.{I}lke \c{C}anak\c{c}\i, Francesca Fedele, Ana Garcia Elsener, Khrystyna Serhiyenko}

\maketitle

\begin{abstract}
One can explicitly compute the generators of a surface cluster algebra either combinatorially, through dimer covers of snake graphs, or homologically, through the CC-map applied to indecomposable modules over the appropriate algebra.
Recent work by Musiker, Ovenhouse and Zhang used Penner and Zeitlin's  decorated super Teichm{\"u}ller theory to define a super version of the cluster algebra of type $A$ and gave a combinatorial formula to compute the even generators. We extend this theory by giving a homological way of explicitly computing these generators by defining a super CC-map for type $A$.
\end{abstract}

\section{Introduction}

Cluster algebras were defined by Fomin and Zelevinsky in the context of Lie theory \cite{fz}, and their connection with representation theory was established soon after their definition, provoking an intense collaboration on both sides of the respective theories \cite{bmrrt,BMR,CCS,kel13}. The link between representation theory and cluster algebras is given by certain maps called cluster character maps. They associate to each module over a certain associative algebra (or object over a triangulated category) an element in the cluster algebra. The first appearance of these maps occurred in \cite{Cch}, hence they are also widely known as Caldero-Chapoton maps.

Cluster algebras from marked surfaces were defined by Fomin, Shapiro and Thurston in \cite{FST08}. In this setting, elements in the cluster algebra are defined by lambda lengths of certain arcs. The concept of lambda length arises from decorated Teichm{\"u}ller theory, where a marked surface can be endowed with a hyperbolic metric having a cusp at each marked point. After choosing a horocycle at every marked point, each
arc can be assigned a number called a lambda length, see \cite{penner1987}. 
Given an initial triangulation,  iteratively applying the ``generalised Ptolemy relation'', the lambda length of an arc can be expressed as a rational function of the lambda lengths of the arcs in said triangulation. Snake graphs appeared in \cite{MS10},\cite{MSW11} as the key element in combinatorial formulas used to obtain cluster algebra elements, i.e. lambda lengths, associated to arcs in a triangulated surface. 

Hence, in the classical setting for cluster algebras arising from surfaces, cluster algebra elements can either be defined  exploiting snake graph combinatorics or can be obtained from modules using Caldero-Chapoton maps.

Recently Musiker, Ovenhouse and Zhang \cite{musiker2021expansion,musiker21} defined  a super algebra arising from decorated super Teichm{\"u}ller theory \cite{bous13,penner19}. These super algebras are generated by even variables, associated to super lambda lengths, and odd variables which anticommute with each other and commute with the even ones. The geometric model for these algebras consists of an oriented triangulation of a disk. The initial even variables are in bijection with the arcs of the triangulation and the remaining super variables are in bijection with the remaining arcs. Moreover, the initial odd variables are associated to each triangle of the triangulation.
The authors show how, as in the classic case, these super lambda lengths, that occur as rational functions on the even variables and their square roots and odd variables, can be computed combinatorially using double dimer covers of snake graphs.

In this article we give a representation theoretic interpretation for the super algebras of type $A$ studied by Musiker, Ovenhouse and Zhang.

We first define an algebra $\Ltil$ obtained by tensoring a gentle algebra $\Lambda$ with the dual numbers, that is $\Ltil \colon = \La \otimes_K K[\epsilon]/(\epsilon^2)$,  where $K$ is the underlying field. In particular we are interested in  \textit{induced modules}, that is modules in $\module\Ltil$ of the form $\widetilde{M}:= M\otimes_K K[\epsilon]/(\epsilon^2)$ such that $M$ is in $\module\La$. 

 We consider 
 a string module $M_\mathcal{G}$ in $\module\La$  corresponding to a snake graph $\mathcal{G}$, and a double dimer cover of $\mathcal{G}$, i.e. a multiset of edges obtained by superimposing two perfect matchings of $\mathcal{G}$. Using this notation, we prove the following.

\textbf{Theorem (Theorem~\ref{thm_lattice_bijection_double}).} \textit{The lattice of the double dimer covers of $\mathcal{G}$ is in bijection with the submodule lattice of $\widetilde M_\mathcal{G}$.}

We then specialise to a gentle algebra type $A$ to study the super algebra from Musiker, Ovenhouse and Zhang.
We construct a super Caldero-Chapoton map from the induced modules to the set of super lambda lengths.

\textbf{Theorem (Theorem~\ref{thm_superCC} and Definition~\ref{defn_superCC}).}
\textit{Let $\Ltil=\Lambda\otimes_K K[\epsilon]/(\epsilon^2)$ where $\Lambda$ is a Jacobian algebra coming from a triangulation (with no internal triangles) of an $(n+3)$-gon. For an arc $\gamma$ in the polygon, let $M_\gamma$ be the corresponding indecomposable in $\mathrm{mod} \,\Lambda$. Then, the corresponding super lambda length is
      \begin{align*}
         CC(\widetilde{M_\gamma})= x_\gamma  
&= X^{\mathrm{ind}_{\widetilde{\Lambda}} (\widetilde{M}_\gamma)  } \sum\limits_{\mathbf{e}\, \in \mathbb{Z}^n }\chi (\mathrm{Gr}_{\mathbf{e}}( \widetilde{M}_\gamma ) )  \prod\limits_{i=1}^n \sqrt{x_i}^{\langle S_i , \oplus_j S_j^{m_j} \rangle_{\widetilde{\Lambda}}}\mu_{\mathbf{e}},
      \end{align*}
where $\mathbf{e}  = \underline{\mathrm{dim}} ( \bigoplus_j  S_j^{m_j})$, $\langle - , - \rangle_{\Ltil}$ is the antisymmetrized bilinear form from Definition \ref{def:bilinear form}  and $\mu_\mathbf{e}$ is as in Notation~\ref{notation_mu}. Moreover, for $E=\oplus_{i=1}^r E_i$, where each $E_i$ is either an indecomposable induced module in $\module\Ltil$ or a shifted projective of the form $P_j[1]$, we define $CC(E)=\prod_{i=1}^r CC(E_i)$, where $CC(P_j[1]):=x_j$.}

In the above, almost all of the terms resemble the ones appearing in the classic Caldero-Chapoton map. Apart from the appearance of square roots, the only surprising term is $\mu_{\mathbf{e}}$, which is the term associating the correct product of odd variables to a given vector $\mathbf{e}$. Moreover, we show in Remark~\ref{remark_simplified_form} that the above formula can be rewritten to reduce most of the calculations to calculations over the algebra $\La$.
Since the super CC-map recovers the combinatorial formula for the super lambda lengths, we see in Corollary~\ref{corollary_super_CC_ptolemy} that the super CC-map respects the super Ptolemy relations.

The paper is organised as follows. In Section~\ref{sec-cluster algebras from surfaces}, we first recall some theory on cluster algebras from surfaces and the combinatorial approach for computing cluster variables and then give an overview of decorated super Teichm{\"u}ller theory and Musiker, Ovenhouse and Zhang super algebra. In Section~\ref{Sec:RepT-SG}, we recall the homological approach to cluster algebras, the classic CC-map and the Frobenius category. In Section~\ref{Sec:tensoring}, we set the basis for the homological approach to the super algebras, by introducing the algebra $\widetilde{\Lambda}$, induced modules over it and establishing further their properties. In Section~\ref{Sec:latticebij}, we prove the lattice bijection. Finally, in Section~\ref{Sec:superCC}, we prove our main result, constructing a super version of the CC-map for type $A$.

\emph{Acknowledgments.} The authors would like to thank Gregg Musiker, Nick Ovenhouse and Sylvester Zhang, as well as Pierre-guy Plamondon for helpful discussions. 
 This work originates as part of the WINART3 Workshop at the
Banff International Research Station. The authors thank BIRS and the organizers of the workshop.
The authors would like to thank the Isaac Newton Institute for Mathematical Sciences, Cambridge, for support and hospitality during the
programme “Cluster algebras and representation theory” (supported by EPSRC grant no. EP/R014604/1) where the work on this paper
was started. 
F.F. was supported by project REDCOM funded by Fondazione Cariverona - program ``Ricerca Scientifica di Eccellenza 2018'' and the ESPRC Programme Grant EP/W007509/1.
 A.G.E. was partially supported by PICT 2021-I–A-01154 Agencia Nacional de Promoción Científica y Tecnológica and by the EPSRC grant EP/R009325/1.
K.S. was supported by the NSF grant DMS-2054255.


\section{Cluster algebras from surfaces}\label{sec-cluster algebras from surfaces}

In this section we will introduce  cluster algebras from unpunctured surfaces. In this setting, a triangulation corresponds to an initial cluster and each arc to a cluster variable. Moreover, there exists an explicit combinatorial formula which computes the cluster variable associated to each arc that is not in the initial triangulation. The exposition follows  \cite{FST08,MSW11,c13}.

\subsection{Cluster algebras from marked surfaces}\label{subsec-cluster algebras marked surfaces}

Cluster algebras from marked surfaces were introduced in~\cite{FST08,FT18}.  In these articles, the authors establish a one-to-one correspondence between lambda-lengths of arcs and cluster variables. A triangulation represents an initial cluster and mutations can be interpreted as flips of arcs~\cite[Proposition 7.6]{FT18}.  
Fixing a triangulation of a surface, the collection of lambda-lengths corresponding to the
arcs in the triangulation (including boundary segments) forms a system of coordinates
for the decorated Teichm{\"u}ller space \cite{penner1987} such that all boundary segments are set to 1. Choosing another triangulation gives rise to a different coordinate chart, but
all the triangulations for a fixed surface are related by sequences of flips, and the cluster variables in the adjacent clusters are related by Ptolemy relations (see Figure \ref{fig:ptolemy}).

\begin{figure}[h]
\centering
 \begin{tikzpicture}
 \begin{scope}
   \node[draw,minimum size=3cm,regular polygon,regular polygon sides=3] (a) at (0,0) {};
  \draw (a.side 3) node[right]{$b$};
  \draw (a.side 2) node[below]{$e$};
  \draw (a.side 1) node[left]{$a$};
  \node[rotate=180, draw,minimum size=3cm,regular polygon,regular polygon sides=3,anchor= side 2] (b) at (a.side 2) {};
  \draw (b.side 3) node[left]{$d$};
  \draw (b.side 1) node[right]{$c$};
 \end{scope}
 \begin{scope}[xshift=6cm]
 \node[draw=none,minimum size=3cm,regular polygon,regular polygon sides=3] (c) {};
  \draw (c.side 3) node[right]{$b$};
  \draw (c.side 1) node[left]{$a$};
  \node[rotate=180,draw=none,minimum size=3cm,regular polygon,regular polygon sides=3,anchor= side 2] (d) at (c.side 2) {};
  \draw (d.side 3) node[left]{$d$};
  \draw (d.side 1) node[right]{$c$};
  \draw (c.corner 1)--(c.corner 2);
  \draw (c.corner 1)--(c.corner 3);
  \draw (d.corner 1)--(d.corner 2);
  \draw (d.corner 1)--(d.corner 3);
  \draw (c.corner 1)--node[right] {$f$}(d.corner 1);
  \end{scope}
  \draw[thick, ->] (2.3,-0.8) -- (3.8,-0.8);
 \end{tikzpicture}
 \caption{Ptolemy transformation: $ef=ac+bd$.}
 \label{fig:ptolemy}
\end{figure}
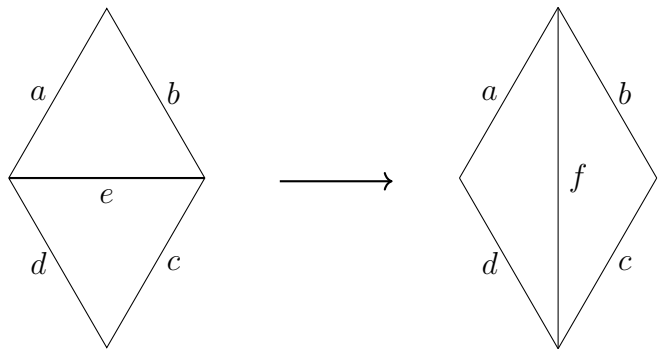

\begin{definition}\label{def:marked surface-arcs-triangulation}  Let $S$ denote an orientable compact surface, with a non-empty boundary denoted by $\partial S$. Let $M  \subset \partial S$ be a finite subset of points such that each boundary component contains at least one point in $M$. The elements of $M$ are called \emph{marked points}, and the pair $(S,M)$ is a \emph{bordered marked surface}.  If $M\subset \partial S$, then $(S,M)$ is called \emph{unpunctured} and it is called \emph{punctured} otherwise. An \emph{arc} in $(S,M)$ is a curve $\gamma$ in $S$ such that its endpoints are marked points, and it is disjoint from $M$ and $\partial S$ otherwise. An arc is considered up to
isotopy relative to its endpoints. We require that arcs do not self-cross, except possibly at the endpoints, and that they are not contractible. Given a marked surface $(S,M)$, a \emph{triangulation} $T$ of $(S,M)$ is a maximal set of non-crossing arcs. 
\end{definition}

In this paper we will  consider cluster algebras (with trivial coefficients) associated to unpunctured marked surfaces. An initial seed $(\mathbf{x}_T, Q_T)$ for the cluster algebra is given by a triangulation $T$ of $(S,M)$ as in Definition~\ref{def:initial seed-cluster algebra}

\begin{definition}\label{def:initial seed-cluster algebra}  Let $T = \{ \gamma_1 , \ldots, \gamma_n \}$ be a triangulation of  a marked surface $(S,M)$. The \emph{adjacency} quiver $Q_T$  associated to the triangulation $T$ is given as follows:
\begin{enumerate}
    \item for each $\gamma_i\in T$, we associate a vertex $i$ in $Q_T$,
    \item for all $\gamma_i, \gamma_j \in T$ that are adjacent in a  triangle, we associate an arrow from $i$ to $j$ if the angle between $\gamma_i$ and $\gamma_j$ in $S$ is clockwise from $\gamma_i$ to $\gamma_j$. 
\end{enumerate}
Furthermore, for each $\gamma_i \in T$, we associate a variable $x_i$ and we let $\mathbf{x}_T=\{x_1,\ldots,x_n\}$.
Then $(\mathbf{x}_T, Q_T)$ is an initial seed for the cluster algebra $\mathcal{A}(S,M)$ associated to the marked surface $(S,M)$.  Moreover, the generating set of $\mathcal{A}(S,M)$  consisting of cluster variables is in one-to-one correspondence with the arcs in $(S,M)$.

Starting from the initial seed $(\mathbf{x}_T, Q_T)$, the expression for the \emph{cluster variable} $x_{\gamma}$ is obtained by recursively applying the Ptolemy relations $x_ex_f=x_ax_c+x_bx_d$ (see Figure~\ref{fig:ptolemy}) and setting all boundary segments to $1$.  Here we will abuse notation by referring to an arc and its corresponding lambda length (see \cite{FT18}) and simply write $ef=ac+bd$ for a Ptolemy relation.

The Ptolemy relations encode the Fomin-Zelevinsky mutations in the surface \cite{FST08} and cluster variables turn out to be Laurent polynomials in the initial seed \cite{fz}. 

The \emph{cluster algebra} $\mathcal{A}(S,M)$ is the subalgebra of $\mathbb{Q}(x_1, \ldots, x_n)$ generated by the cluster variables $x_{\gamma}$ (associated to each arc $\gamma$).   
\end{definition}

\begin{example}\label{example:small example}  Let $T$ be the triangulation of the octagon in Figure~\ref{fig:example-type-A}. Then $Q_T$ is the adjacency quiver of $T$. Each arc $\gamma_i \in T$ corresponds to an initial cluster variable $x_i$. So $T$  gives rise to the initial seed $(\mathbf{x}_T, Q_T)=(\{ x_1, \ldots, x_5\}, \xymatrix{ 1& 2\ar[l]&3\ar[r]\ar[l]& 4&5\ar[l]} )$.

\begin{figure}[h!]
    \centering
\begin{tikzpicture}
\begin{scope}
   \node[draw,minimum size=4.5cm,regular polygon,regular polygon sides=8] (a) at (0,0) {};
  \draw [blue] (a.corner 1)--node(m)[left] {$4$} (a.corner 6);
  \draw [blue] (a.corner 1)--node(n)[right] {$5$} (a.corner 7);
  \draw [blue] (a.corner 2)--node(o)[left] {$3$} (a.corner 6);
  \draw [blue] (a.corner 2)--node(p)[below left] {$2$} (a.corner 5);
  \draw [blue] (a.corner 2)--node(q)[left] {$1$} (a.corner 4);
  \draw (a.side 1) node[above]{$b_7$};
  \draw (a.side 2) node[left=1pt]{$b_8$};
  \draw (a.side 3) node[left]{$b_1$};
  \draw (a.side 4) node[left=1pt]{$b_2$};
  \draw (a.side 5) node[below]{$b_3$};
  \draw (a.side 6) node[right=1pt]{$b_4$};
  \draw (a.side 7) node[right]{$b_5$};
  \draw (a.side 8) node[right=1pt]{$b_6$};
  \draw (a.side 2) node[left=40pt, blue]{$T$};
  \draw [-stealth, red, thick] (n)--(m);
  \draw [-stealth, red, thick] (o)--(m);
  \draw [-stealth, red, thick] (o)--(p);
  \draw [-stealth, red, thick] (p)--(q);
 \end{scope}
 \begin{scope}[xshift=7cm]
\draw (0,0)  node{\xymatrix{Q_T:  1& 2\ar[l]&3\ar[r]\ar[l]& 4&5\ar[l]}};
 \end{scope}
 \end{tikzpicture}
 \caption{A triangulation $T$ of the octagon and its associated quiver $Q_T$, highlighted in red in the figure. The cluster algebra $\mathcal{A}(S,M)$ has initial seed $(\mathbf{x}_T, Q_T)=(\{ x_1, \ldots, x_5 \}, 1 \leftarrow 2\leftarrow 3 \to 4 \leftarrow 5 )$.}
\label{fig:example-type-A}
\end{figure}
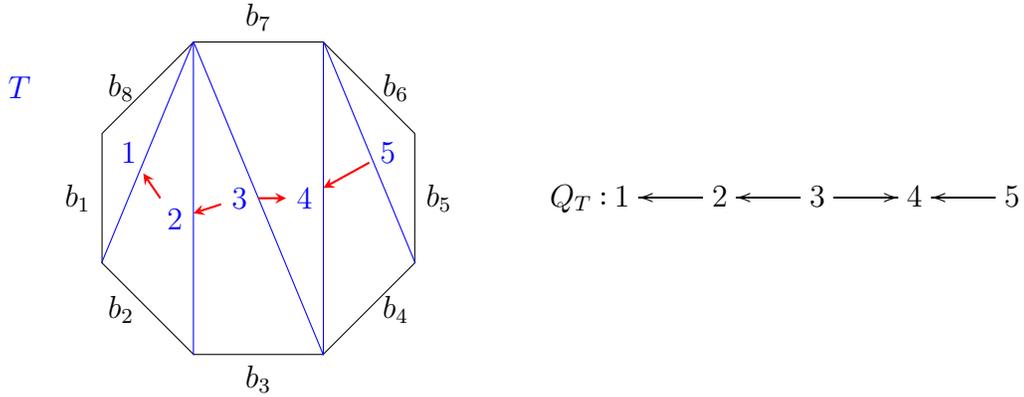
\end{example}

 Since the generating set for the cluster algebra is given by an iterative process, finding explicit formulae for the cluster variables in terms of an initial seed is a difficult question in general. In the surface case, this can be described combinatorially via snake graphs or homologically by the CC-map. We will review these two approaches, the former in this section and the  latter in Section~\ref{Sec:RepT-SG}.

\subsubsection{Snake graph formula}\label{subsec:snake-graph-single-dimer}
Snake graph formulae were introduced by Musiker, Schiffler \cite{MS10} and used by them in \cite{MSW11} to prove  structural properties for cluster algebras from surfaces.

We give an overview of the definition for unpunctured surfaces.

Let $T$ be a triangulation of an unpunctured marked surface $(S,M)$ and let $\gamma$ be an (oriented) arc  not in $T$, with starting point $s$ and ending point $t$. Then $\gamma$ crosses (not necessarily distinct) arcs $\gamma_{1}, \ldots, \gamma_{d} \in T$, in this order. The snake graph $\mathcal{G}_\gamma$ associated to $\gamma$ consists of one tile $G_i$ for each arc $\gamma_i$ and it is constructed as follows. For each $i$, the diagonal $\gamma_i$ is the common side of two triangles of the triangulation, and is thus the diagonal of a quadrilateral. The square tile $G_i$ has four sides labelled the same as the sides of this quadrilateral.
If $i$ is even, then the orientation of $G_i$ matches the one in the triangulation, and if $i$ is odd, then the orientation of $G_i$ is reversed. The edges shared by two adjacent tiles are called \textit{interior edges} and the remaining edges are called \textit{boundary edges}.

A snake graph $\mathcal{G}$ is a planar graph consisting of a finite sequence of square tiles $G_1, \ldots, G_d$ such that each tile is attached to the East or North edge of the previous one.

The reader may find the complete definition of snake graph for cluster algebras associated to punctured surfaces with principal coefficients in \cite[Section 4.3]{MSW11} and may see  the definition and examples in the introductory notes \cite{S18-notes}. We illustrate this construction for the triangulated polygon from Example~\ref{example:small example}.
 
\begin{example}  Consider the arc $\gamma$ in the
the triangulation of the octagon given in Figure \ref{fig:snake-1}. The snake graph $\mathcal{G}_\gamma$ associated to the arc $\gamma$ is shown in Figure \ref{fig:snake-1}, where the face weight $i$ of the tile corresponds to the quadrilateral with diagonal $i$.
\begin{figure}[h!]
\centering
\begin{subfigure}
    \centering
    \begin{tikzpicture}[scale=1]
   \node[draw,minimum size=4.5cm,regular polygon,regular polygon sides=8] (a) at (0,0) {};
  \draw  (a.corner 1)--node(m)[left] {$4$} (a.corner 6);
  \draw  (a.corner 1)--node(n)[right] {$5$} (a.corner 7);
  \draw  (a.corner 2)--node(o)[left] {$3$} (a.corner 6);
  \draw  (a.corner 2)--node(p)[below left] {$2$} (a.corner 5);
  \draw  (a.corner 2)--node(q)[left] {$1$} (a.corner 4);
  \draw [color=red, thick] (a.corner 3)-- node[right=2pt] {$\gamma$} (a.corner 7);
  \draw (a.corner 3) node[left, red]{$s$};
  \draw (a.corner 7) node[right, red]{$t$};
  \draw (a.side 1) node[above]{$b_7$};
  \draw (a.side 2) node[left=1pt]{$b_8$};
  \draw (a.side 3) node[left]{$b_1$};
  \draw (a.side 4) node[left=1pt]{$b_2$};
  \draw (a.side 5) node[below]{$b_3$};
  \draw (a.side 6) node[right=1pt]{$b_4$};
  \draw (a.side 7) node[right]{$b_5$};
  \draw (a.side 8) node[right=1pt]{$b_6$};
    \end{tikzpicture}
\end{subfigure}
\qquad
    \begin{subfigure}
    \centering
\begin{tikzpicture}[scale=1,yshift=4cm]
  \draw (0,0) \rectanglepath;
  \draw (0,1) \rectanglepath;
  \draw (1,1) \rectanglepath;
  \draw (2,1) \rectanglepath;
  \draw (-0.15,0.5) node{$\scriptstyle b_1$};
  \draw (0.5,-0.15) node{$\scriptstyle b_8$};
  \draw (0.5,2.15) node{$\scriptstyle 3$};
  \draw (-0.15,1.5) node{$\scriptstyle 1$};
  \draw (1.15,0.5) node{$\scriptstyle 2$};
  \draw (0.85,1.5) node{$\scriptstyle b_3$};
  \draw (0.5,1.15) node{$\scriptstyle b_2$};
  \draw (1.5,0.85) node{$\scriptstyle 2$};
  \draw (1.5,2.15) node{$\scriptstyle 4$};
  \draw (2.5,2.15) node{$\scriptstyle 5$};
  \draw (2.2,1.5) node{$\scriptstyle b_7$};
  \draw (2.5,0.85) node{$\scriptstyle 3$};
  \draw (3.2,1.5) node{$\scriptstyle b_4$};
  \draw (0.5,0.5) node{$\huge 1$};
  \draw (0.5,1.5) node{$\huge 2$};
  \draw (1.5,1.5) node{$\huge 3$};
  \draw (2.5,1.5) node{$\huge 4$};
\end{tikzpicture}
\end{subfigure}
\begin{subfigure}
\centering
    \begin{tikzpicture}[scale=1]
  \draw (0,0) \rectanglepath;
  \draw (0,1) \rectanglepath;
  \draw (1,1) \rectanglepath;
  \draw (2,1) \rectanglepath;
  \draw (-0.15,0.5) node{$\scriptstyle b_1$};
  \draw (0.5,-0.15) node{$\scriptstyle b_8$};
  \draw (0.5,2.15) node{$\scriptstyle 3$};
  \draw (-0.15,1.5) node{$\scriptstyle 1$};
  \draw (1.15,0.5) node{$\scriptstyle 2$};
  \draw (0.85,1.5) node{$\scriptstyle b_3$};
  \draw (0.5,1.15) node{$\scriptstyle b_2$};
  \draw (1.5,0.85) node{$\scriptstyle 2$};
  \draw (1.5,2.15) node{$\scriptstyle 4$};
  \draw (2.5,2.15) node{$\scriptstyle 5$};
  \draw (2.2,1.5) node{$\scriptstyle b_7$};
  \draw (2.5,0.85) node{$\scriptstyle 3$};
  \draw (3.2,1.5) node{$\scriptstyle b_4$};
  \draw (0.5,0.5) node{$\huge 1$};
  \draw (0.5,1.5) node{$\huge 2$};
  \draw (1.5,1.5) node{$\huge 3$};
  \draw (2.5,1.5) node{$\huge 4$};
  \draw[ultra thick, color=blue] (0,0)--(0,1);
  \draw[ultra thick, color=blue] (0,2)--(1,2);
  \draw[ultra thick, color=blue] (1,1)--(1,0);
  \draw[ultra thick, color=blue] (2,1)--(2,2);
  \draw[ultra thick, color=blue] (3,1)--(3,2);
    \end{tikzpicture}
\end{subfigure}
\caption{On the left a diagonal $\gamma$ on a triangualted octagon, in the middle is its snake graph $\mathcal{G}_\gamma$ and on the right is one of its dimer covers $P$ highlighted in thick blue.}
\label{fig:snake-1}
\end{figure}
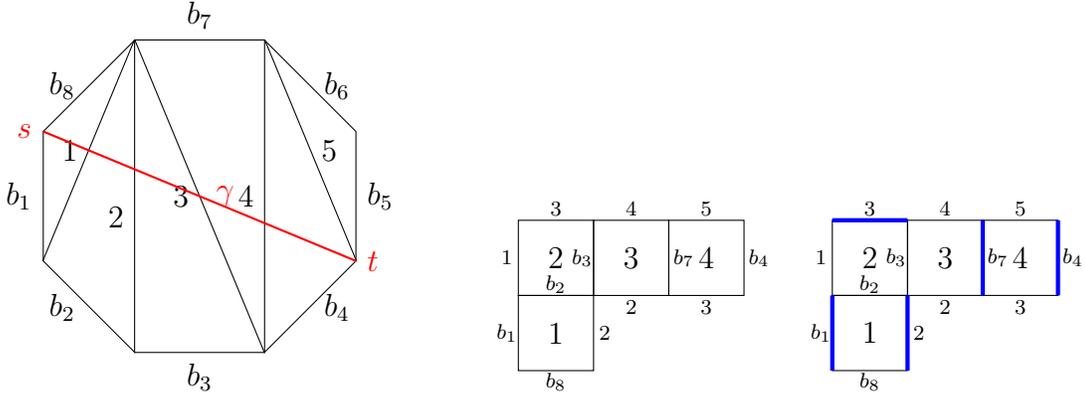
\end{example}

\begin{definition}\label{definition:perfect_matching}
A \textit{perfect matching} or \textit{dimer cover} of a graph $\mathcal{G}$ is a subset $P$ of the edges of $\mathcal{G}$ such that each vertex of $\mathcal{G}$ is incident to exactly one edge in $P$.
\end{definition}

Each snake graph has exactly two special  dimer covers consisting  only of boundary edges called \textit{boundary dimer covers}.

\begin{definition}\label{defn_minimal_matching}
Let $\mathcal{G}$ be a snake graph. Its \textit{minimal dimer cover} $P_{\textup{min}}$ is the boundary dimer cover containing the West edge of the initial tile $G_1$ and its \textit{maximal dimer cover} $P_{\textup{max}}$ is the complementary boundary dimer  cover to $P_{\textup{min}}$. 
\end{definition}

Observe that if a dimer cover $P$ on a snake graph $\mathcal{G}$ has a tile $G_i$ which has two of its edges in $P$ then these edges are either South and North or West and East edges of $G_i$. If we replace South and North edges with West and East edges or vice versa we obtain a dimer cover $P'$ which agrees with $P$ everywhere except for the tile $G_i$. This allows us to define a partial order in the set of  dimer covers $\mathcal{L}(\mathcal{G})$ of a snake graph. Starting with the minimal dimer cover (or maximal dimer cover) we twist either South and North or West and East edges of a tile $G_i$ when permitted iteratively. This gives a lattice structure on $\mathcal{L}(\mathcal{G})$, where cover relations correspond to pairs of dimer covers related by a single twist, see \cite{MSW11}.

\begin{definition}\label{def:cross weight}
Let $T=\{\gamma_1,\ldots, \gamma_n\}$ be a triangulation of a marked surface $(S,M)$. Fix an arc $\gamma$ that is not in $T$ and let $\sg$ be its snake graph.

 \begin{enumerate}
    \item Define $\cross (\gamma)$ to be the product $\displaystyle\prod_{f} x_f$ where the index set is over all face weights of $\mathcal{G}_\gamma$ (considered with multiplicities).
    \item For a dimer cover $P$, define the weight $\wt(P)$  to be the product $\displaystyle\prod_{e\in P} x_e$ where the index set is taken over 
    all edge weights in $P$ (considered with multiplicities).
\end{enumerate} 
\end{definition}

For instance in Figure~\ref{fig:snake-1}, $\cross(\gamma)= x_1 x_2 x_3 x_4$ and the dimer cover $P$ on the right has weight $\wt(P) =x_2 x_3$, where we omitted the boundary  edge  contributions, that is the $b_i$'s, as they are set equal to $1$.

\begin{definition}\label{def:expansion formula}
Let $T=\{\gamma_1,\ldots,\gamma_n\}$ be a triangulation of a marked surface $(S,M)$ and let $\gamma$ be an arc in $(S,M)$. If $\gamma=\gamma_i \in T$, we have $x_\gamma = x_i$ and if  $\gamma \notin T$, we set 
\[ x_\mathcal{G_\gamma} = \dfrac{1}{\cross(\gamma)}  \sum_{P\in\mathcal{D}(\mathcal{G})} \wt(P),\]
 where $\mathcal{D}(\mathcal{G})$ is the set of dimer covers of  the snake graph $\mathcal{G}_\gamma$ associated to $\gamma$.
\end{definition}

\begin{theorem}{\cite[Theorem 10.1]{MSW11}} Suppose $(S,M)$ is a marked surface and $T$ is a triangulation on $(S,M)$. Let $\gamma$ be an arc that is not in $T$, $x_\gamma$ be the corresponding cluster variable in the cluster algebra $\mathcal{A}(S,M)$ and $\mathcal{G}_\gamma$ be its snake graph with respect to $T$.  Then
    $x_\gamma=x_{\mathcal{G_\gamma}}.$
\end{theorem}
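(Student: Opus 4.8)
The plan is to show that the Laurent polynomials $x_{\mathcal{G}_\gamma}$ produced by the snake-graph formula obey the same recursion that generates the cluster variables, and then to conclude by induction on the number $d$ of arcs of $T$ crossed by $\gamma$, proving the statement simultaneously for all triangulations $T$ of $(S,M)$. If $d=0$, then by maximality of $T$ the arc $\gamma$ is isotopic to some $\gamma_i\in T$, and $x_{\mathcal{G}_\gamma}=x_i=x_\gamma$ by definition. If $d=1$, then $\mathcal{G}_\gamma$ is a single tile whose four sides carry the weights $a,b,c,d$ of the quadrilateral with diagonal $\gamma_1$; this graph has exactly two perfect matchings, of weights $x_ax_c$ and $x_bx_d$, while $\cross(\gamma)=x_{\gamma_1}$, so $x_{\mathcal{G}_\gamma}=(x_ax_c+x_bx_d)/x_{\gamma_1}$, which is precisely the exchange relation obtained by flipping $\gamma_1$; hence $x_{\mathcal{G}_\gamma}=x_\gamma$.

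For the inductive step, assume $d\ge 2$ and choose a crossed arc $\gamma_k$ whose flip $\gamma_k\mapsto\gamma_k'$ yields a triangulation $T'$ in which $\gamma$ crosses only $d-1$ arcs (such an arc exists; for instance $\gamma$ can be isotoped across a triangle adjacent to one of its endpoints). By the inductive hypothesis applied to $T'$ we get $x_\gamma=x_{\mathcal{G}'_\gamma}$, where $\mathcal{G}'_\gamma$ is the snake graph of $\gamma$ with respect to $T'$, so it suffices to prove the purely combinatorial identity $x_{\mathcal{G}'_\gamma}=x_{\mathcal{G}_\gamma}$. Here one rewrites $x_{\gamma_k'}$ via the Ptolemy relation around $\gamma_k$, namely $x_{\gamma_k}x_{\gamma_k'}=x_ax_c+x_bx_d$ for the sides $a,b,c,d$ of the quadrilateral with diagonal $\gamma_k$, and one must show that $\mathcal{G}_\gamma$ is obtained from $\mathcal{G}'_\gamma$ by an explicit \emph{local surgery} on the tiles coming from $\gamma_k$ which, at the level of perfect matchings and their weights, realizes exactly the substitution $x_{\gamma_k'}\mapsto (x_ax_c+x_bx_d)/x_{\gamma_k}$; equivalently, one exhibits a weight-preserving bijection between $\mathcal{D}(\mathcal{G}_\gamma)$ and a suitably ``resolved'' collection of matchings of $\mathcal{G}'_\gamma$, compatible with the lattice structure on $\mathcal{L}(\mathcal{G})$. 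This matching-by-matching bookkeeping --- the substance of \emph{snake-graph calculus} (grafting and resolution of snake graphs) --- is the main obstacle: the argument branches according to whether $\gamma_k$ sits at an end of or inside the crossing sequence and according to the parity of the tile index governing orientations, and in each case one must track which tiles are created, destroyed, or relabelled.

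An alternative route avoids the flip induction altogether: starting from the $T$-path expansion of $x_\gamma$ (Schiffler, Schiffler--Thomas), one constructs a single weight-preserving bijection between complete $T$-paths from $s$ to $t$ and perfect matchings of $\mathcal{G}_\gamma$. A $T$-path alternates between arcs it follows and arcs of the crossing sequence $\gamma_1,\dots,\gamma_d$ that it crosses, and recording the followed segments tile by tile produces a matching, with the factors of $\cross(\gamma)$ corresponding exactly on both sides. I would regard the flip induction as the conceptual argument and this $T$-path bijection as the hands-on one; in either case, the nontrivial content is combinatorial rather than algebraic or geometric.
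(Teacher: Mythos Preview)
The paper does not prove this theorem; it is quoted as background from \cite[Theorem~10.1]{MSW11} and no argument is given. There is therefore nothing in the paper to compare your attempt against.

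As for your sketch itself: both strategies you describe are the ones actually used in the literature. The original proof in \cite{MSW11} proceeds along your second route, reducing to the $T$-path expansion formula of Schiffler and Schiffler--Thomas and exhibiting a weight-preserving bijection between complete $(T,\gamma)$-paths and perfect matchings of $\mathcal{G}_\gamma$. Your first route, flip induction plus a local surgery on snake graphs realizing the substitution $x_{\gamma_k'}\mapsto (x_ax_c+x_bx_d)/x_{\gamma_k}$, is essentially the content of \c{C}anak\c{c}{\i}--Schiffler's snake graph calculus; it works, but as you yourself note, the case analysis tracking tile positions and orientations is where all the labour sits, and you have not carried it out. What you have written is an accurate road map of two known proofs rather than a proof; to turn either paragraph into an argument you would need to execute the bijection (or the surgery) explicitly.
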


\begin{figure}[h!]\scalebox{0.5}{
    \centering
\begin{tikzpicture}[scale=1]
\begin{scope}
\draw (0,0) \rectanglepath;
  \draw (0,1) \rectanglepath;
  \draw (1,1) \rectanglepath;
  \draw (2,1) \rectanglepath;
  \draw (0.5,0.5) node{$\huge 1$};
  \draw (0.5,1.5) node{$\huge 2$};
  \draw (1.5,1.5) node{$\huge 3$};
  \draw (2.5,1.5) node{$\huge 4$};
  \draw[ultra thick, color=blue] (0,0)--(1,0);
  \draw[ultra thick, color=blue] (0,2)--(0,1);
  \draw[ultra thick, color=blue] (1,1)--(2,1);
  \draw[ultra thick, color=blue] (1,2)--(2,2);
  \draw[ultra thick, color=blue] (3,1)--(3,2);
  \draw (4,1.5) node{};
  \draw (1.5,2.5) node[font = {\Large\bfseries\sffamily}]{$x_1 x_2 x_4$};
  \draw [ultra thick] (2,0)--(2,-0.8);
  \end{scope}
  \begin{scope}[yshift=-3cm]
\draw (0,0) \rectanglepath;
  \draw (0,1) \rectanglepath;
  \draw (1,1) \rectanglepath;
  \draw (2,1) \rectanglepath;
  \draw (0.5,0.5) node{$\huge 1$};
  \draw (0.5,1.5) node{$\huge 2$};
  \draw (1.5,1.5) node{$\huge 3$};
  \draw (2.5,1.5) node{$\huge 4$};
  \draw[ultra thick, color=blue] (0,0)--(1,0);
  \draw[ultra thick, color=blue] (0,2)--(0,1);
  \draw[ultra thick, color=blue] (1,1)--(1,2);
  \draw[ultra thick, color=blue] (2,1)--(2,2);
  \draw[ultra thick, color=blue] (3,1)--(3,2);
  \draw (3.8,1.5) node[font = {\Large\bfseries\sffamily}]{$x_1$};
  \draw [ultra thick] (-0.2,-0.2)--(-1.8,-0.8);
  \draw [ultra thick] (3.2,-0.2)--(4.8,-0.8);
  \end{scope}
  \begin{scope}[xshift=-5cm, yshift=-6cm]
  \draw (0,0) \rectanglepath;
  \draw (0,1) \rectanglepath;
  \draw (1,1) \rectanglepath;
  \draw (2,1) \rectanglepath;
  \draw (0.5,0.5) node{$\huge 1$};
  \draw (0.5,1.5) node{$\huge 2$};
  \draw (1.5,1.5) node{$\huge 3$};
  \draw (2.5,1.5) node{$\huge 4$};
  \draw[ultra thick, color=blue] (0,0)--(1,0);
  \draw[ultra thick, color=blue] (0,2)--(0,1);
  \draw[ultra thick, color=blue] (1,1)--(1,2);
 \draw[ultra thick, color=blue] (2,1)--(3,1);
  \draw[ultra thick, color=blue] (3,2)--(2,2);
  \draw (1.5,2.5) node[font = {\Large\bfseries\sffamily}]{$x_1 x_3 x_5$};
    \draw [ultra thick] (3.2,-0.2)--(4.8,-0.8);
  \end{scope}
   \begin{scope}[xshift=5cm, yshift=-6cm]
\draw (0,0) \rectanglepath;
  \draw (0,1) \rectanglepath;
  \draw (1,1) \rectanglepath;
  \draw (2,1) \rectanglepath;
  \draw (0.5,0.5) node{$\huge 1$};
  \draw (0.5,1.5) node{$\huge 2$};
  \draw (1.5,1.5) node{$\huge 3$};
  \draw (2.5,1.5) node{$\huge 4$};
  \draw[ultra thick, color=blue] (0,0)--(1,0);
  \draw[ultra thick, color=blue] (0,2)--(1,2);
  \draw[ultra thick, color=blue] (1,1)--(0,1);
  \draw[ultra thick, color=blue] (2,1)--(2,2);
  \draw[ultra thick, color=blue] (3,1)--(3,2);
  \draw (1.5,2.5) node[font = {\Large\bfseries\sffamily}]{$x_3$};
  \draw [ultra thick] (-0.2,-0.2)--(-1.8,-0.8);
  \draw [ultra thick] (3.2,-0.2)--(4.8,-0.8);
  \end{scope}
  \begin{scope}[yshift=-9cm]
  \draw (0,0) \rectanglepath;
  \draw (0,1) \rectanglepath;
  \draw (1,1) \rectanglepath;
  \draw (2,1) \rectanglepath;
  \draw (0.5,0.5) node{$\huge 1$};
  \draw (0.5,1.5) node{$\huge 2$};
  \draw (1.5,1.5) node{$\huge 3$};
  \draw (2.5,1.5) node{$\huge 4$};
  \draw[ultra thick, color=blue] (0,0)--(1,0);
  \draw[ultra thick, color=blue] (0,2)--(1,2);
  \draw[ultra thick, color=blue] (1,1)--(0,1);
  \draw[ultra thick, color=blue] (2,1)--(3,1);
  \draw[ultra thick, color=blue] (3,2)--(2,2);
  \draw (1.5,2.5) node[font = {\Large\bfseries\sffamily}]{$x_3^2 x_5$};
    \draw [ultra thick] (3.2,-0.2)--(4.8,-0.8);
  \end{scope}
  \begin{scope}[xshift=10cm, yshift=-9cm]
       \draw (0,0) \rectanglepath;
  \draw (0,1) \rectanglepath;
  \draw (1,1) \rectanglepath;
  \draw (2,1) \rectanglepath;
  \draw (0.5,0.5) node{$\huge 1$};
  \draw (0.5,1.5) node{$\huge 2$};
  \draw (1.5,1.5) node{$\huge 3$};
  \draw (2.5,1.5) node{$\huge 4$};
  \draw[ultra thick, color=blue] (0,0)--(0,1);
  \draw[ultra thick, color=blue] (0,2)--(1,2);
  \draw[ultra thick, color=blue] (1,1)--(1,0);
  \draw[ultra thick, color=blue] (2,1)--(2,2);
  \draw[ultra thick, color=blue] (3,1)--(3,2);
  \draw (3.8,1.5) node[font = {\Large\bfseries\sffamily}]{$x_2 x_3$};
  \draw [ultra thick] (-0.2,-0.2)--(-1.8,-0.8);
  \end{scope}
  \begin{scope}[xshift=5cm, yshift=-12cm]
       \draw (0,0) \rectanglepath;
  \draw (0,1) \rectanglepath;
  \draw (1,1) \rectanglepath;
  \draw (2,1) \rectanglepath;
  \draw (0.5,0.5) node{$\huge 1$};
  \draw (0.5,1.5) node{$\huge 2$};
  \draw (1.5,1.5) node{$\huge 3$};
  \draw (2.5,1.5) node{$\huge 4$};
  \draw[ultra thick, color=blue] (0,0)--(0,1);
  \draw[ultra thick, color=blue] (0,2)--(1,2);
  \draw[ultra thick, color=blue] (1,1)--(1,0);
  \draw[ultra thick, color=blue] (2,1)--(3,1);
  \draw[ultra thick, color=blue] (3,2)--(2,2);
  \draw (1.5,2.6) node[font = {\Large\bfseries\sffamily}]{$x_2 x_3^2 x_5$};
  \end{scope}
    \end{tikzpicture}
    }
    \caption{The lattice $\mathcal{L(\mathcal{G})}$ of the snake graph $\mathcal{G}_{\gamma}$ for the previous example. In the figure, $\mathrm{wt}(P)$ is indicated for each dimer cover $P$.}
\label{fig:lattice-1}
\end{figure}
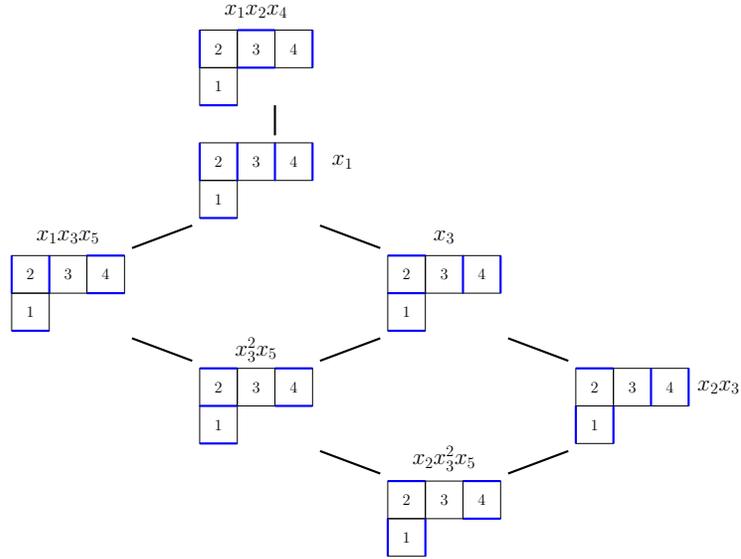

\begin{example}\label{ex: example expansion formula} Let $x_\gamma$ be the cluster variable associated to the arc $\gamma$ in Figure~\ref{fig:snake-1}. Then the terms in the sum $\displaystyle\sum_{P\in\mathcal{D}(\mathcal{G})} \wt(P)$ are given in  Figure~\ref{fig:lattice-1}.
Hence
\[ x_\gamma  = x_\mathcal{G{\gamma}} = \dfrac{1}{x_1 x_2 x_3 x_4} (x_1 x_2 x_4 + x_2 x_3 + x_1 + x_3 + x_1 x_3 x_5 + x_3^2 x_5 + x_2 x_3^2 x_5).\]
\end{example}

\subsection{Super lambda lengths}\label{subsec-super-L-len}

In \cite{penner19}, Penner and Zeitlin studied a supersymmetric analogue of decorated Teichm\"uller spaces by introducing the \textit{decorated super Teichm\"uller space} associated to a bordered marked surface (with punctures) $S$. They define a system of coordinates that splits into
two classes: the even coordinates, called the \emph{super lambda lengths}, and the odd coordinates called \emph{$\mu$-invariants}.

Similarly to the classic case described in the previous section, the super lambda lengths correspond to the arcs (including the boundary segments which we set to $1$). Moreover, the odd coordinates correspond to the triangles.

As in the classic case, the choice of a different oriented triangulation gives a different coordinate chart. In analogy to the expansion formula in Definition \ref{def:expansion formula}, Musiker, Ovenhouse and Zhang \cite{musiker2021expansion,musiker21}
 established a method to compute any super  lambda length $x_\gamma$ in terms of an initial system of coordinates defined by a triangulation when $S$ is a disk with marked points on the boundary.

Consider a triangulation $T$ (with no internal triangles) of the disk with $n+3$ marked points on the boundary. Set $x_1, \ldots, x_n$ as the super lambda lengths in $T$ (each one with extra data given by an orientation for each arc), and set $\theta_1, \ldots , \theta_{n+1}$ as the $\mu$-invariants corresponding to each triangle. Then the super lambda length  associated to an arc will be an element in (a quotient of) the $\mathbb{Z}_2$-graded algebra 
\[\mathcal{A} = A_0 \oplus A_1 = \mathbb{R}[x_1^{\pm 1/2}, \ldots, x_n^{\pm 1/2}, \theta_1, \ldots , \theta_{n+1}]\]
where any algebraic combination of $x_1^{\pm 1/2},\dots, x_n^{\pm 1/2}$ is in $A_0$, each one of $\theta_1,\dots, \theta_{n+1}$ is in $A_1$, and the $\theta$-variables are subject to relations: $\theta_i\theta_j=-\theta_j\theta_i$ for all $i,j$. The even part $A_0$ is spanned by monomials with an even number of $\theta$'s and the odd part $A_1$ is spanned by monomials with an odd number of $\theta$'s. Hence we refer to the $\theta$'s as the \emph{odd variables} and the $x_i$'s as the \emph{even variables}.


In the general surface case, the orientation of the arcs plays a role. Not all the oriented triangulations are related by sequences of flips due to the definition of flip for a general $S$. Because of this, Penner and Zeitlin introduce spin structures and an equivalence relation between them. The set of \textit{spin structures on $S$} is defined to be the set of equivalence classes of orientations on triangulations of $S$ with respect to the equivalence relation shown in Figure \ref{fig:spin_structure}, where $\epsilon_i$'s are orientations of the edges and $\theta$ is the $\mu$-invariant associated to the triangle. For a given $i$, $-\epsilon_i$ indicates the reverse of the edge $\epsilon_i$.
\begin{figure}[h]
    \centering
 \begin{tikzpicture}
   \node[draw,minimum size=3cm,regular polygon,regular polygon sides=3] (a) at (0,0) {};
  \draw (a.side 3) node[right]{$\epsilon_b$};
  \draw (a.side 2) node[below]{$\epsilon_a$};
  \draw (a.side 1) node[left]{$\epsilon_c$};
  \draw (a.center) node{$\theta$};
  \node[draw,minimum size=3cm,regular polygon,regular polygon sides=3] (b) at (5,0) {};
  \draw (b.side 3) node[right]{$-\epsilon_b$};
  \draw (b.side 2) node[below]{$-\epsilon_a$};
  \draw (b.side 1) node[left]{$-\epsilon_c$};
  \draw (b.center) node{$-\theta$};
  \draw (2.5,0.5) node{\huge $\sim$};
 \end{tikzpicture}
 \caption{The equivalence on orientations determining the spin structures on $S$.}
 \label{fig:spin_structure}
\end{figure}
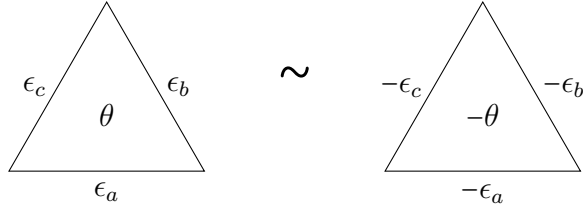

Because of the existence of these spin structures, now the Ptolemy transformation is an operation described as follows. A \textit{flip} of an oriented diagonal of $T$ is shown in Figure \ref{fig:super_ptolemy}: the flip of $e$ into $f$ is obtained by an anticlockwise rotation of $\pi/2$. In the figure, $\epsilon_i$'s are orientations of the edges and for a given $i$, $-\epsilon_i$ indicates the reverse of the edge $\epsilon_i$.

Then, the \emph{super Ptolemy relations} corresponding to the flip are given as follows:
\begin{align*}
    ef &=ac+bd + \sqrt{acbd}\, \sigma\theta\\
    \sigma'&=\frac{\sigma\sqrt{bd}-\theta \sqrt{ac}}{\sqrt{ac+bd}} \\
    \theta'&=\frac{\theta\sqrt{bd}+\sigma \sqrt{ac}}{\sqrt{ac+bd}}.
\end{align*}
 We write $\sigma > \theta$ to mean that $\sigma\theta=-\theta\sigma$ is the positive product between the two $\mu$-invariants, and similarly $\sigma' > \theta'$ (see Subsection~\ref{subsection-default-orientation} for the ordering of $\mu$-invariants in the special case of the disk).

Note that, abusing notation, we denote both an arc and its super lambda length by the same letter.

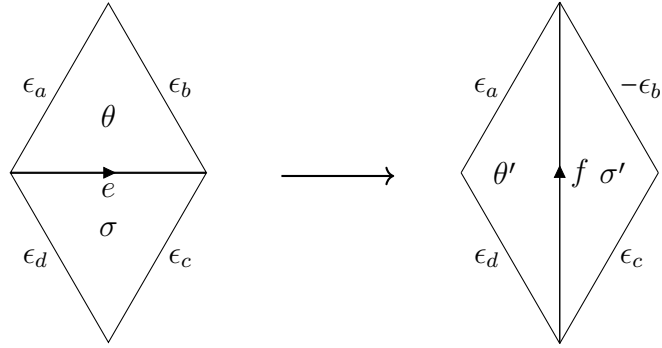
\begin{figure}[h]
    \centering
 \begin{tikzpicture}
 \begin{scope}
   \node[draw,minimum size=3cm,regular polygon,regular polygon sides=3] (a) at (0,0) {};
  \draw (a.side 3) node[right]{$\epsilon_b$};
  \draw (a.side 2) node[below]{$e$};
  \draw (a.side 1) node[left]{$\epsilon_a$};
  \draw (a.center) node{$\theta$};
  \node[rotate=180, draw,minimum size=3cm,regular polygon,regular polygon sides=3,anchor= side 2] (b) at (a.side 2) {};
  \draw (b.side 3) node[left]{$\epsilon_d$};
  \draw (b.side 1) node[right]{$\epsilon_c$};
  \draw (b.center) node{$\sigma$};
  \draw (a.corner 3)--(a.corner 2) node[currarrow,
    pos=0.5, 
    xscale=1,
    sloped]{};
 \end{scope}
 \begin{scope}[xshift=6cm]
 \node[draw=none,minimum size=3cm,regular polygon,regular polygon sides=3] (c) {};
  \draw (c.side 3) node[right]{$-\epsilon_b$};
  \draw (c.side 1) node[left]{$\epsilon_a$};
  \node[rotate=180,draw=none,minimum size=3cm,regular polygon,regular polygon sides=3,anchor= side 2] (d) at (c.side 2) {};
  \draw (d.side 3) node[left]{$\epsilon_d$};
  \draw (d.side 1) node[right]{$\epsilon_c$};
  \draw (c.corner 1)--(c.corner 2);
  \draw (c.corner 1)--(c.corner 3);
  \draw (d.corner 1)--(d.corner 2);
  \draw (d.corner 1)--(d.corner 3);
  \draw (c.corner 1)--node[right] {$f$}(d.corner 1);
  \draw (c.corner 2) node[right=8pt]{$\theta'$};
  \draw (c.corner 3) node[left=8pt]{$\sigma'$};
  \draw (c.corner 1)--(d.corner 1) node[currarrow,
    pos=0.5, 
    xscale=-1,
    sloped]{};
  \end{scope}
  \draw[thick, ->] (2.3,-0.8) -- (3.8,-0.8);
 \end{tikzpicture}
 \caption{A flip of an oriented diagonal of the triangulation $T$.}\label{fig:super_ptolemy}
\end{figure}

\begin{remarks}
\begin{enumerate}
\item Since multiplication of two  $\mu$-invariants is anticommutative, for any $\mu$-invariant $\theta$, we have that $\theta^2=0$ and in the situation of Figure \ref{fig:super_ptolemy}, $ \sigma\theta=\sigma'\theta'$.

\item The super Ptolemy transformation also affects the orientations  as illustrated in Figure \ref{fig:super_ptolemy}. The orientation of the edges $a,\, c,\, d$ are unchanged, while the one of $b$ is reversed. 
\item Note that we can ignore the orientations of the boundary segments since they do not contribute to the calculation of super lambda lengths.
\item Because of the orientation, the super Ptolemy relation is not an involution, and it needs to be applied $8$ times to go back to the initial situation. 
\item Unlike most surface cases, when $S$ is a disk with marked points and $T$ a triangulation on $S$ which doesn't contain any internal triangles, there is a unique spin structure \cite[Proposition 4.1]{musiker2021expansion}. 
\end{enumerate}
\end{remarks}

\begin{remark}
Note that performing a flip twice, we obtain the initial triangulation but the orientations of all the diagonals bounding one of the triangles are reversed, see Figure \ref{fig:super_ptolemy_twice}. Moreover, using the super Ptolemy relations, it is easy to see that the $\mu$-invariants are as indicated in the figure.
In particular, note that even if the two orientations give the same spin structure, the $\mu$-invariants are the same only up to sign. So the specific $\mu$-invariants do not only depend on a chosen triangulation and spin structure, but also the choice of an orientation. This is why we restrict to triangulations without internal triangles, because there we are able to determine a default orientation.
\end{remark}

\begin{figure}[h]
    \centering
 \begin{tikzpicture}[scale=0.8]
 \begin{scope}
   \node[draw,minimum size=3cm,regular polygon,regular polygon sides=3] (a) at (0,0) {};
  \draw (a.side 3) node[right]{$\epsilon_b$};
  \draw (a.side 2) node[below]{$e$};
  \draw (a.side 1) node[left]{$\epsilon_a$};
  \draw (a.center) node{$\theta$};
  \node[rotate=180, draw,minimum size=3cm,regular polygon,regular polygon sides=3,anchor= side 2] (b) at (a.side 2) {};
  \draw (b.side 3) node[left]{$\epsilon_d$};
  \draw (b.side 1) node[right]{$\epsilon_c$};
  \draw (b.center) node{$\sigma$};
  \draw (a.corner 3)--(a.corner 2) node[currarrow,
    pos=0.5, 
    xscale=1,
    sloped]{};
 \end{scope}
 \begin{scope}[xshift=6cm]
 \node[draw=none,minimum size=3cm,regular polygon,regular polygon sides=3] (c) {};
  \draw (c.side 3) node[right]{$-\epsilon_b$};
  \draw (c.side 1) node[left]{$\epsilon_a$};
  \node[rotate=180,draw=none,minimum size=3cm,regular polygon,regular polygon sides=3,anchor= side 2] (d) at (c.side 2) {};
  \draw (d.side 3) node[left]{$\epsilon_d$};
  \draw (d.side 1) node[right]{$\epsilon_c$};
  \draw (c.corner 1)--(c.corner 2);
  \draw (c.corner 1)--(c.corner 3);
  \draw (d.corner 1)--(d.corner 2);
  \draw (d.corner 1)--(d.corner 3);
  \draw (c.corner 1)--node[right] {$f$}(d.corner 1);
  \draw (c.corner 2) node[right=8pt]{$\theta'$};
  \draw (c.corner 3) node[left=8pt]{$\sigma'$};
  \draw (c.corner 1)--(d.corner 1) node[currarrow,
    pos=0.5, 
    xscale=-1,
    sloped]{};
  \end{scope}
  \begin{scope}[xshift=12cm]
   \node[draw,minimum size=3cm,regular polygon,regular polygon sides=3] (a) at (0,0) {};
  \draw (a.side 3) node[right]{$-\epsilon_b$};
  \draw (a.side 2) node[below]{$e$};
  \draw (a.side 1) node[left]{$-\epsilon_a$};
  \draw (a.center) node{$-\theta$};
  \node[rotate=180, draw,minimum size=3cm,regular polygon,regular polygon sides=3,anchor= side 2] (b) at (a.side 2) {};
  \draw (b.side 3) node[left]{$\epsilon_d$};
  \draw (b.side 1) node[right]{$\epsilon_c$};
  \draw (b.center) node{$\sigma$};
  \draw (a.corner 3)--(a.corner 2) node[currarrow,
    pos=0.5, 
    xscale=-1,
    sloped]{};
 \end{scope}
  \draw[thick, ->] (2.3,-0.8) -- (3.8,-0.8);
  \draw[thick, ->] (8.3,-0.8) -- (9.8,-0.8);
 \end{tikzpicture}
 \caption{Performing a flip twice.}\label{fig:super_ptolemy_twice}
\end{figure}
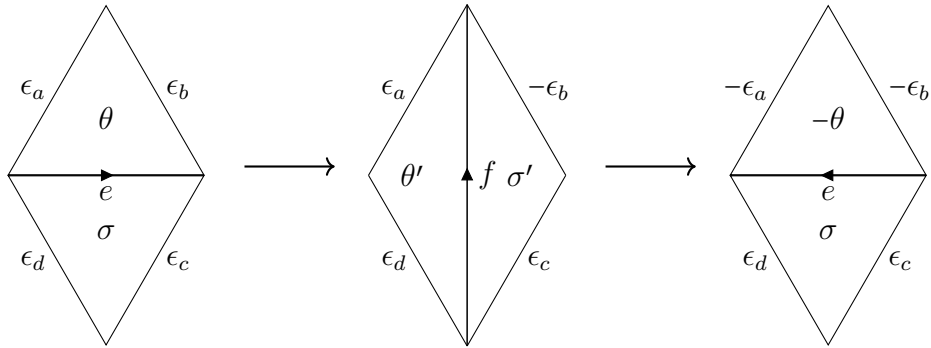

\subsubsection{The default orientation and the positive order}\label{subsection-default-orientation}
Let us fix $S$ to be a marked disk and assume $T$ is a triangulation on $S$ such that every triangle in $T$ has a boundary edge, that is there are no internal triangles in the triangulation or equivalently there is a longest arc $\gamma=(s,t)$ crossing all the arcs in $T$.

We will introduce an orientation of  $T$, which we call the \textit{default orientation} and this will determine a total order on the set of the $\mu$-invariants. We call this order the \textit{positive order}.

\begin{definition}\label{defn_fan_centre}
A triangulation $T$ is called a \textit{fan} if all the internal diagonals share a common vertex. If $T$ is not a fan, we define a canonical \textit{fan decomposition of $T$} as follows.
Fix an orientation on the longest arc $\gamma=(s,t)$.
The intersections of $\gamma$ with the internal diagonals of $T$ create smaller triangles. The vertices on $\partial S$ of these triangles are called \textit{fan centres} and are denoted by $c_1,\dots, c_N$ with the intersection of the arc $(c_i,c_{i+1})$ and $\gamma$ closer to the source of $\gamma$ than the one of $(c_{i+1},c_{i+2})$ and $\gamma$. Moreover, we set the source of $\gamma$ to be $c_0$ and its target to be $c_{N+1}$, see Figure~\ref{fig:fan_decomposition}.
\end{definition}

\begin{definition}{\label{defn_default_orientation}}
The \textit{default orientation of $T$} is defined as follows. If $T$ is a single fan, then all the interior edges are directed away from the only fan centre. Otherwise, consider the $N>1$ fan centres $c_1,\dots,c_N$ labelled as in Definition \ref{defn_fan_centre}. The interior edges inside each fan segment are directed away from its centre. Moreover, the edges where two fans meet are directed
\begin{align*}
    c_1\rightarrow c_2\rightarrow\cdots\rightarrow c_{N-1}\rightarrow c_N.
\end{align*}
\end{definition}

See Figure \ref{fig:fan_decomposition} for an example of a fan decomposition with default orientation.
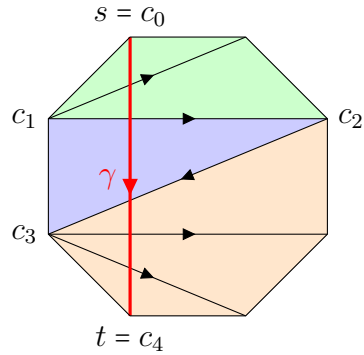
\begin{figure}
    \centering
 \begin{tikzpicture}
   \node[draw,minimum size=4cm,regular polygon,regular polygon sides=8] (a) {};
   \fill[green!20!white] (a.corner 3) -- (a.corner 8) --(a.corner 1) --(a.corner 2) --(a.corner 3);
   \fill[blue!20!white] (a.corner 8) -- (a.corner 4) --(a.corner 3) --(a.corner 8);
   \fill[orange!20!white] (a.corner 4) -- (a.corner 8) --(a.corner 7)--(a.corner 6) --(a.corner 5) --(a.corner 4);
  \draw  (a.corner 3)-- (a.corner 1) node[currarrow,pos=0.5, xscale=1, sloped]{};
  \draw  (a.corner 3)-- (a.corner 8) node[currarrow,pos=0.5, xscale=1, sloped]{};
  \draw  (a.corner 4)-- (a.corner 8) node[currarrow,pos=0.5, xscale=-1, sloped]{};
  \draw  (a.corner 4)-- (a.corner 7) node[currarrow,pos=0.5, xscale=1, sloped]{};
1  \draw  (a.corner 4)-- (a.corner 6) node[currarrow,pos=0.5, xscale=1, sloped]{};
   \draw [color=red, very thick] (a.corner 2)-- node[right=2pt, currarrow,pos=0.5, xscale=1, sloped] {} (a.corner 5);
   \node [below right=15pt and 15pt of a.corner 3, red]{$\gamma$};
  \draw (a.corner 2) node[above]{$s=c_0$};
  \draw (a.corner 5) node[below]{$t=c_4$};
  \draw (a.corner 3) node[left]{$c_1$};
  \draw (a.corner 8) node[right]{$c_2$};
  \draw (a.corner 4) node[left]{$c_3$};
  \draw (a.corner 1) -- (a.corner 2) -- (a.corner 3) -- (a.corner 4) -- (a.corner 5) -- (a.corner 6) -- (a.corner 7) -- (a.corner 8) -- (a.corner 1);
  \end{tikzpicture}
 \caption{A fan decomposition with default orientation. The different fans, with fan centres $c_1,\, c_2$ and $c_3$ respectively, are highlighted in different colours.}\label{fig:fan_decomposition}
\end{figure}

 The positive ordering can be described in different ways, the following was described in \cite[Remark 5.7]{musiker2021expansion}. Moreover, in Remark \ref{remark_positive_ordering_thetas}, we will give a different description, useful for some of our arguments.

\begin{definition}
    \label{defn:positive_ordering}
     We define the \emph{positive ordering} on the $\mu$-invariants inductively as follows. Denote the $\mu$-invariants $\theta_1,\dots,\theta_{n+1}$ ordered by proximity to $s$, that is $\theta_1$ has $s$ as a vertex and the triangle $\theta_i$ is closer to $s$ than $\theta_{i+1}$. 
    Then, if the edge between $\theta_i$ and $\theta_{i+1}$ is oriented so that  $\theta_i$ is to the right, we declare $\theta_i >\theta_j$ for all $j>i$.
    Otherwise, we declare $\theta_i<\theta_j$ for all $j>i$.
\end{definition}

Alternatively, the positive ordering on the $\mu$-invariants is induced by the ordering on fans and the positive ordering within each fan.

Recall that we denote by $\gamma = (s,t)$ the longest arc. By abuse of notation we denote by $\theta_i$ the $\mu$-invariant corresponding to a triangle and the triangle itself.

\begin{example}
Let the triangles in Figure \ref{fig:fan_decomposition} be labelled $\theta_1, \theta_2,\dots,\theta_6$ in order from $s$ to $t$. Then, the positive ordering is
\begin{align*}
    \theta_3 >\theta_6 > \theta_5 >\theta_4 > \theta_2 > \theta_1.
\end{align*}
\end{example}

\subsubsection{Snake graphs and double dimer covers}

Throughout this section, we assume $S$ is a disk with marked points on the boundary $\partial S$ and $T$ is a triangulation on $S$ that does not contain any internal triangles. We fix an orientation of the longest arc and consider the positive ordering on $(S,T)$.

Let $\gamma$ be a diagonal on $(S,T)$ and $\mathcal{G}_\gamma$ its snake graph.
Recall that each tile of a snake graph $\mathcal{G}_\gamma$ is obtained by gluing together two adjacent triangles along one arc of the triangulation.  In the presence of a spin structure on a disk, the two adjacent triangles correspond to two $\mu$-invariants and we write these in the associated tile of the snake graph at the bottom-left and the top-right corners.

\begin{example}\label{exmp_1} An example of snake graph is indicated on the right of Figure~\ref{fig:triang_polygon_eg} for the longest arc $\gamma$ depicted on the left of the figure.
\begin{figure}[h]
    \centering
 \begin{tikzpicture}
 \begin{scope}
   \node[draw,minimum size=4cm,regular polygon,regular polygon sides=5] (a) {};
  \draw  (a.corner 2)-- node[above] {$1$}  (a.corner 5)
  node[currarrow,pos=0.5, xscale=1, sloped]{};
  \draw  (a.corner 2)-- node[below=2pt] {$2$} (a.corner 4) node[currarrow,pos=0.5, xscale=1, sloped]{};
   \draw [color=red, very thick] (a.corner 1)-- node[right=.1pt,scale=1.2] {$\gamma$} (a.corner 3);
  \draw (a.corner 2) node[left=4pt]{$1$};
  \draw (a.corner 4) node[below]{$3$};
  \draw (a.corner 5) node[right]{$4$};
  \draw (a.corner 1) node[above,red]{$s=0$};
  \draw (a.corner 3) node[below,red]{$t=2$};
  \draw (a.side 3) node[below]{$e$};
  \draw (a.side 2) node[left]{$a$};
  \draw (a.side 1) node[above]{$b$};
  \draw (a.side 4) node[right]{$d$};
  \draw (a.side 5) node[above]{$c$};
  \draw (a.side 3) node[above=5pt]{\tiny $\theta_3$};
  \draw (a.side 5) node[left=8pt]{\tiny $\theta_1$};
  \draw (a.side 4) node[left=8pt]{\tiny $\theta_2$};
  \end{scope}
 \begin{scope}[scale=1.3, xshift=3.8cm, yshift=-1cm]
  \draw (0,0) \rectanglepath;
  \draw (1,0) \rectanglepath;
  \draw (-0.2,0.5) node{$b$};
  \draw (0.5,-0.2) node{$c$};
  \draw (1.5,-0.2) node{$1$};
  \draw (2.2,0.5) node{$a$};
  \draw (1.2,0.5) node{$d$};
  \draw (0.5,1.2) node{$2$};
  \draw (1.5,1.2) node{$e$};
  \draw (0.2,0.2) node{\tiny $\theta_1$};
  \draw (0.8,0.8) node{\tiny $\theta_2$};
  \draw (1.2,0.2) node{\tiny $\theta_2$};
  \draw (1.8,0.8) node{\tiny $\theta_3$};
   \draw (0.5,0.5) node{\Large $1$};
  \draw (1.5,0.5) node{\Large $2$};
 \end{scope}
 \end{tikzpicture}
 \caption{A triangulation of the pentagon with default orientation, and the snake graph of the longest arc $\gamma$ with source $s$ and target $t$ on the right.}\label{fig:triang_polygon_eg}
\end{figure}
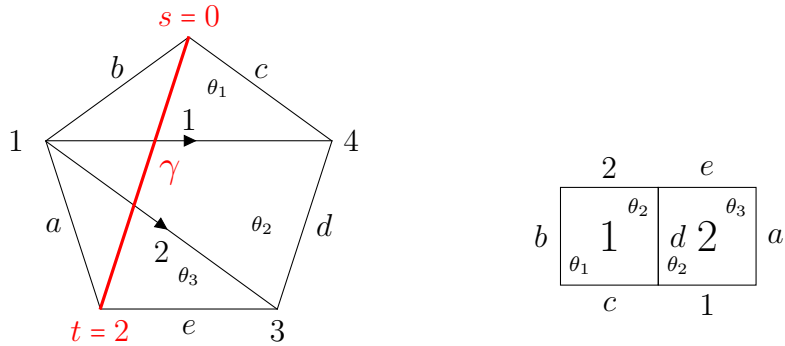
\end{example}

\begin{definition}{\cite[Definition 4.1]{musiker21}}
A \textit{double dimer cover of} a planar bipartite graph $G$ is a multiset $D$ 
of edges of $G$ such that each vertex of $G$ is incident to exactly two edges from $D$. Each element of $D$ is called a \textit{dimer}. If $D$ contains two copies of the same edge, these are called a \textit{double dimer}. The set of all double dimer covers of $G$ is denoted by $\mathcal{DD}(G)$.
\end{definition}

\begin{remark}\label{remark_superimpose_single_dimers}
    Note that any double dimer cover $D$ of a snake graph $\mathcal{G}$ can be obtained (non uniquely) by superimposing two dimer covers $P$ and $P'$ of $\mathcal{G}$, that can be constructed as follows. Whenever $D$ has two copies of the same edge $e$, include a copy of $e$ in both $P$ and $P'$. The remaining edges of $D$ are single, and by construction they lie on a cycle enclosing some tiles of $\mathcal{G}$. Such a cycle is the union of the maximal and minimal dimer covers of the smaller snake graph consisting of the enclosed tiles. Add the minimal one to $P$ and the maximal one to $P'$. It is easy to check that doing this for each cycle in $D$ creates two dimer covers $P$ and $P'$ that superimpose to give $D$ back.
\end{remark}

Each snake graph has two double dimer covers consisting only of boundary edges: the \textit{minimal} and the \textit{maximal} one.

\begin{definition}
 Following the same convention as in Definition \ref{defn_minimal_matching}, the \emph{minimal double dimer cover} is the one containing two copies of the West edge of the initial tile $G_1$ and only boundary edges. The \emph{maximal double dimer cover} is given complementary to the minimal one and contains only boundary edges. Notice that both maximal and minimal double dimer covers consist only of double edges.
\end{definition}

\begin{notation}
We illustrate double dimers as double blue lines and dimers as single blue lines.
\end{notation}

\begin{definition}{\cite[Defn 4.4]{musiker21}}\label{defn_weight_MOZ}
Let $\mathcal{G}$ be a snake graph and $D$ a double dimer cover of $\mathcal{G}$.
\begin{itemize}
\item The \textit{weight of a dimer} in $D$ is the square root of the weight of the corresponding edge of $\mathcal{G}$.
\item Let $c(D)$ be the set of cycles formed by edges of $D$. For $C\in c(D)$, let $\theta_i$ and $\theta_j$ be the odd variables corresponding to the triangles in the bottom-left and top-right corner of $C$ respectively. Then the \textit{weight of the cycle $C$}, denoted by $\wt(C)$, is the product of $\theta_i$ and $\theta_j$ multiplied according to the positive order.
\item The weight of $D$ is defined to be
\begin{align*}
    \textup{wt}_2(D):= \prod_{e\in D} \textup{wt}_2(e) \prod_{C\in c(D)} \wt(C).
\end{align*}
\end{itemize}
\end{definition}

\begin{example}
Consider the following double dimer cover $D$ of a snake graph.
\begin{figure}[h]
\centering
\begin{tikzpicture}[scale=1.3]
  \draw (0,0) \rectanglepath;
  \draw (1,0) \rectanglepath;
  \draw (-0.2,0.5) node{$b$};
  \draw (0.5,-0.2) node{$c$};
  \draw (1.5,-0.2) node{$1$};
  \draw (2.2,0.5) node{$a$};
  \draw (1.2,0.5) node{$d$};
  \draw (0.5,1.2) node{$2$};
  \draw (1.5,1.2) node{$e$};
  \draw (0.2,0.2) node{\tiny $\theta_1$};
  \draw (0.8,0.8) node{\tiny $\theta_2$};
  \draw (1.2,0.2) node{\tiny $\theta_2$};
  \draw (1.8,0.8) node{\tiny $\theta_3$};
  \draw (0.5,0.5) node{\Large $1$};
  \draw (1.5,0.5) node{\Large $2$};
  \draw[very thick, double, double distance=1.3pt, blue](2,1)--(2,0);
  \draw[very thick, blue]  (0,1)--(1,1);
 \draw[very thick, blue]  (0,0)--(1,0);
  \draw[very thick, blue] (1,0)--(1,1);
  \draw[very thick, blue] (0,0)--(0,1);
\end{tikzpicture}
\end{figure}

The weight of $D$ is
\begin{align*}
    \textup{wt}_2(D)= \sqrt{ x_2}\, \theta_2 \theta_1.
\end{align*}
Note that in the formula for the weight of $D$, the $\mu$-invariants appear following the positive ordering $\theta_3>\theta_2>\theta_1$ which, like in this example, can be different from the order they appear in the snake graph.
\end{example}

In \cite{musiker21}, a super analogue of the snake graph formula is shown to  give expansions for super lambda lengths.
\begin{theorem}{\cite[Theorem 6.2]{musiker21}}\label{thm_MOZ_formula}
Consider a triangulated polygon with no internal triangles. Let $\mathcal{G}$ be the snake graph corresponding to an arc $\gamma \notin T$. Then  the super lambda length $x_\gamma$ is given by
\begin{align*}
    x_\gamma =\frac{1}{\cross(\gamma)} \sum_{D\in \mathcal{DD}(\mathcal{G})} \textup{wt}_2(D).
\end{align*}
\end{theorem}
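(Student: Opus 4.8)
The plan is to prove the identity by induction on $d$, the number of diagonals of $T$ crossed by $\gamma$, which equals the number of tiles of $\mathcal{G}=\sg$. This follows the template of the proof of the classical expansion formula in \cite[Theorem~10.1]{MSW11}: one shows that the right-hand side $\frac{1}{\cross(\gamma)}\sum_{D}\textup{wt}_2(D)$ satisfies the same recursion as the super lambda length $x_\gamma$, namely the super Ptolemy relations, and agrees with it in the base case. For the base case $d=1$, the arc $\gamma$ is the flip of a single diagonal $e=\gamma_1$ sitting in a quadrilateral with sides $a,b,c,d$, so $\mathcal{G}$ is a single tile with $\cross(\gamma)=x_e$. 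Its three double dimer covers are the two boundary covers (doubling the opposite pair $\{a,c\}$, respectively $\{b,d\}$) and the single $4$-cycle $C$; by Definition~\ref{defn_weight_MOZ} their weights are $ac$, $bd$ and $\sqrt{abcd}\,\wt(C)=\sqrt{acbd}\,\sigma\theta$ with $\sigma,\theta$ the two $\mu$-invariants taken in the positive order, and dividing by $x_e$ reproduces exactly $ef=ac+bd+\sqrt{acbd}\,\sigma\theta$, i.e.\ $x_\gamma=f$.

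For the inductive step I would peel off the last tile $G_d$. Geometrically, the quadrilateral of $T$ with diagonal $\gamma_d$ carries a super Ptolemy relation of the form $x_\gamma\cdot y=x_\alpha x_{\alpha'}+x_\beta x_{\beta'}+\sqrt{x_\alpha x_{\alpha'}x_\beta x_{\beta'}}\,\sigma\theta_d$, where $y$ and $\alpha,\alpha',\beta,\beta'$ are (lambda lengths of) arcs that, after uncrossing the last crossing of $\gamma$, lie in a sub-polygon and cross at most $d-1$ diagonals of $T$, $\theta_d$ is the $\mu$-invariant of one of the two triangles meeting $\gamma_d$, and $\sigma$ is the odd variable produced by the recursion applied to the shorter arcs. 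On the combinatorial side, I would partition $\mathcal{DD}(\mathcal{G})$ into three families according to the local configuration at $G_d$: two families in which $D$ restricts to a double dimer cover of the snake graph obtained by deleting $G_d$ and doubling one of the two freed boundary edges, and a third family in which $D$ contains a cycle passing through $G_d$ (and possibly extending into $G_{d-1},G_{d-2},\dots$), so that deleting that cycle leaves a double dimer cover of a smaller snake graph. Summing $\textup{wt}_2$ over the first two families and invoking the induction hypothesis recovers $x_\alpha x_{\alpha'}$ and $x_\beta x_{\beta'}$ after dividing by $\cross(\gamma)$, while summing over the third recovers the super-correction term $\sqrt{x_\alpha x_{\alpha'}x_\beta x_{\beta'}}\,\sigma\theta_d$. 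As a variant, one could instead use Remark~\ref{remark_superimpose_single_dimers} to write each double dimer as a superposition of two ordinary dimer covers, thereby reducing the monomial part of the sum to the classical snake graph formula and isolating the cycle contributions; the cycle weights still require the analysis below.

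The hard part will be two compatibility checks. First, the super Ptolemy relations for the transformed $\mu$-invariants,
\[\sigma'=\frac{\sigma\sqrt{bd}-\theta\sqrt{ac}}{\sqrt{ac+bd}},\qquad \theta'=\frac{\theta\sqrt{bd}+\sigma\sqrt{ac}}{\sqrt{ac+bd}},\]
introduce denominators $\sqrt{ac+bd}$ and factors such as $\sqrt{acbd}$ that are not Laurent monomials in the $\sqrt{x_i}$; one must check that, as the tiles are peeled off, these non-monomial factors telescope so that the final expression lies in the algebra $\mathcal{A}$ and matches the face weights $\cross(\gamma)$ of $\mathcal{G}$. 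Here the hypothesis that $T$ has no internal triangles is essential, since it supplies the fan decomposition of Definition~\ref{defn_fan_centre} and hence control over which boundary edges of $\mathcal{G}$ carry which labels. Second, and most delicate, one must verify that the product of $\mu$-invariants attached to a cycle in the third family agrees, \emph{including sign}, with the odd term produced by the recursion: a cycle in $\mathcal{G}$ contributes $\theta_i\theta_j$ with $\theta_i,\theta_j$ the corner triangles multiplied in the positive order of Definition~\ref{defn:positive_ordering}, and one has to see this equals the $\sigma\theta_d$ obtained from the super Ptolemy relations, using the identity $\sigma\theta=\sigma'\theta'$ (the remark after Figure~\ref{fig:super_ptolemy_twice}) together with the fact that the positive order is compatible with the fan decomposition and with the default edge orientations. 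Once both compatibilities are in place, both sides obey the same recursion with the same initial data, and the theorem follows.
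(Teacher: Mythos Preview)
The paper does not prove this theorem. It is quoted verbatim from \cite[Theorem~6.2]{musiker21} as a background result and used later (in Theorem~\ref{thm_superCC}) as a black box to identify the combinatorial double-dimer expansion with the super lambda length. There is therefore no ``paper's own proof'' to compare against.

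That said, your sketch is a reasonable outline of how the cited theorem is actually proved in \cite{musiker21}: induction on the number of tiles, peeling off the last crossing and matching the three local contributions at $G_d$ with the three terms of the super Ptolemy relation. You have also correctly located the two genuinely delicate points --- the telescoping of the irrational factors coming from the odd Ptolemy relations, and the sign/order bookkeeping ensuring that cycle weights $\theta_i\theta_j$ agree with the recursively produced $\sigma\theta$ under the positive order of Definition~\ref{defn:positive_ordering}. If you intend to present an actual proof rather than cite \cite{musiker21}, those two points need to be carried out in full; they are exactly where the ``no internal triangles'' hypothesis and the default orientation of Section~\ref{subsection-default-orientation} are used.
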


\begin{example}\label{eg_double_dim}
Continuing Example \ref{exmp_1}, the set of double dimer covers for the snake graph of $\gamma$, consists of the following six elements.
\begin{figure}[H]
    \centering
    \begin{subfigure}
    \centering
    \begin{tikzpicture}[scale=1.3]
  \draw (0,0) \rectanglepath;
  \draw (1,0) \rectanglepath;
  \draw (-0.2,0.5) node{$b$};
  \draw (0.5,-0.2) node{$c$};
  \draw (1.5,-0.2) node{$1$};
  \draw (2.2,0.5) node{$a$};
  \draw (1.2,0.5) node{$d$};
  \draw (0.5,1.2) node{$2$};
  \draw (1.5,1.2) node{$e$};
  \draw (0.2,0.2) node{\tiny $\theta_1$};
  \draw (0.8,0.8) node{\tiny $\theta_2$};
  \draw (1.2,0.2) node{\tiny $\theta_2$};
  \draw (1.8,0.8) node{\tiny $\theta_3$};
  \draw (0.5,0.5) node{\Large $1$};
  \draw (1.5,0.5) node{\Large $2$};
  \draw[very thick, double, double distance=1.3pt, blue] (2,1)--(2,0);
  \draw[very thick, double, double distance=1.3pt, blue] (1,1)--(0,1);
  \draw[very thick, double, double distance=1.3pt, blue] (0,0)--(1,0);
\end{tikzpicture}
    \end{subfigure}
    \begin{subfigure}
    \centering
    \begin{tikzpicture}[scale=1.3]
  \draw (0,0) \rectanglepath;
  \draw (1,0) \rectanglepath;
  \draw (-0.2,0.5) node{$b$};
  \draw (0.5,-0.2) node{$c$};
  \draw (1.5,-0.2) node{$1$};
  \draw (2.2,0.5) node{$a$};
  \draw (1.2,0.5) node{$d$};
  \draw (0.5,1.2) node{$2$};
  \draw (1.5,1.2) node{$e$};
  \draw (0.2,0.2) node{\tiny $\theta_1$};
  \draw (0.8,0.8) node{\tiny $\theta_2$};
  \draw (1.2,0.2) node{\tiny $\theta_2$};
  \draw (1.8,0.8) node{\tiny $\theta_3$};
 \draw (0.5,0.5) node{\Large $1$};
  \draw (1.5,0.5) node{\Large $2$};
  \draw[very thick, double, double distance=1.3pt, blue] (2,1)--(2,0);
  \draw[very thick, blue]  (0,1)--(1,1);
 \draw[very thick, blue]  (0,0)--(1,0);
  \draw[very thick, blue] (1,0)--(1,1);
  \draw[very thick, blue] (0,0)--(0,1);
\end{tikzpicture}
    \end{subfigure}
    \begin{subfigure}
    \centering
    \begin{tikzpicture}[scale=1.3]
  \draw (0,0) \rectanglepath;
  \draw (1,0) \rectanglepath;
  \draw (-0.2,0.5) node{$b$};
  \draw (0.5,-0.2) node{$c$};
  \draw (1.5,-0.2) node{$1$};
  \draw (2.2,0.5) node{$a$};
  \draw (1.2,0.5) node{$d$};
  \draw (0.5,1.2) node{$2$};
  \draw (1.5,1.2) node{$e$};
  \draw (0.2,0.2) node{\tiny $\theta_1$};
  \draw (0.8,0.8) node{\tiny $\theta_2$};
  \draw (1.2,0.2) node{\tiny $\theta_2$};
  \draw (1.8,0.8) node{\tiny $\theta_3$};
   \draw (0.5,0.5) node{\Large $1$};
  \draw (1.5,0.5) node{\Large $2$};
  \draw[very thick, double, double distance=1.3pt, blue] (2,1)--(2,0);
  \draw[very thick, double, double distance=1.3pt, blue] (0,0)--(0,1);
  \draw[very thick, double, double distance=1.3pt, blue] (1,0)--(1,1);
\end{tikzpicture}
\end{subfigure}

\begin{subfigure}
    \centering
    \begin{tikzpicture}[scale=1.3]
  \draw (0,0) \rectanglepath;
  \draw (1,0) \rectanglepath;
  \draw (-0.2,0.5) node{$b$};
  \draw (0.5,-0.2) node{$c$};
  \draw (1.5,-0.2) node{$1$};
  \draw (2.2,0.5) node{$a$};
  \draw (1.2,0.5) node{$d$};
  \draw (0.5,1.2) node{$2$};
  \draw (1.5,1.2) node{$e$};
  \draw (0.2,0.2) node{\tiny $\theta_1$};
  \draw (0.8,0.8) node{\tiny $\theta_2$};
  \draw (1.2,0.2) node{\tiny $\theta_2$};
  \draw (1.8,0.8) node{\tiny $\theta_3$};
   \draw (0.5,0.5) node{\Large $1$};
  \draw (1.5,0.5) node{\Large $2$};
  \draw[very thick, blue]  (0,1)--(1,1);
 \draw[very thick, blue] (0,0)--(1,0);
  \draw[very thick, blue] (0,0)--(0,1);
    \draw[very thick, blue] (1,0)--(2,0);
  \draw[very thick, blue] (2,0)--(2,1);
    \draw[very thick, blue] (2,1)--(1,1);
\end{tikzpicture}
    \end{subfigure}
\begin{subfigure}
    \centering
    \begin{tikzpicture}[scale=1.3]
  \draw (0,0) \rectanglepath;
  \draw (1,0) \rectanglepath;
 \draw (-0.2,0.5) node{$b$};
  \draw (0.5,-0.2) node{$c$};
  \draw (1.5,-0.2) node{$1$};
  \draw (2.2,0.5) node{$a$};
  \draw (1.2,0.5) node{$d$};
  \draw (0.5,1.2) node{$2$};
  \draw (1.5,1.2) node{$e$};
  \draw (0.2,0.2) node{\tiny $\theta_1$};
  \draw (0.8,0.8) node{\tiny $\theta_2$};
  \draw (1.2,0.2) node{\tiny $\theta_2$};
  \draw (1.8,0.8) node{\tiny $\theta_3$};
  \draw (0.5,0.5) node{\Large $1$};
  \draw (1.5,0.5) node{\Large $2$};
  \draw[very thick, double, double distance=1.3pt, blue] (0,0)--(0,1);
   \draw[very thick, blue] (1,0)--(1,1);
    \draw[very thick, blue] (1,0)--(2,0);
  \draw[very thick, blue] (2,0)--(2,1);
    \draw[very thick, blue] (2,1)--(1,1);
\end{tikzpicture}
    \end{subfigure}
\begin{subfigure}
    \centering
    \begin{tikzpicture}[scale=1.3]
  \draw (0,0) \rectanglepath;
  \draw (1,0) \rectanglepath;
  \draw (-0.2,0.5) node{$b$};
  \draw (0.5,-0.2) node{$c$};
  \draw (1.5,-0.2) node{$1$};
  \draw (2.2,0.5) node{$a$};
  \draw (1.2,0.5) node{$d$};
  \draw (0.5,1.2) node{$2$};
  \draw (1.5,1.2) node{$e$};
  \draw (0.2,0.2) node{\tiny $\theta_1$};
  \draw (0.8,0.8) node{\tiny $\theta_2$};
  \draw (1.2,0.2) node{\tiny $\theta_2$};
  \draw (1.8,0.8) node{\tiny $\theta_3$};
   \draw (0.5,0.5) node{\Large $1$};
  \draw (1.5,0.5) node{\Large $2$};
  \draw[very thick, double, double distance=1.3pt, blue] (0,0)--(0,1);
  \draw[very thick, double, double distance=1.3pt, blue] (1,0)--(2,0);
  \draw[very thick, double, double distance=1.3pt, blue] (1,1)--(2,1);
\end{tikzpicture}
    \end{subfigure}
\end{figure}
The corresponding weights are respectively
\begin{align*}
     x_2,\,\, \sqrt{ x_2}\, \theta_2 \theta_1, \,\, 1,\\
    \sqrt{ x_1 x_2}\, \theta_3 \theta_1,\,\, \sqrt{ x_1}\, \theta_3 \theta_2, \,\,  x_1.
\end{align*}

Since cross$(\gamma)=x_1 x_2$, we have that
\begin{align*}
    x_\gamma =\frac{1}{x_1x_2} ( x_2 + \sqrt{ x_2}\, \theta_2 \theta_1 + \,\, 1 +
    \sqrt{x_1 x_2}\, \theta_3 \theta_1 + \sqrt{ x_1}\, \theta_3 \theta_2+ x_1).
\end{align*}
\end{example}

The second part of \cite[Theorem 6.2]{musiker21} expresses the $\mu$-invariant associated to the triangle, having $\gamma$ and a boundary segment as sides, in terms of the initial triangulation. 

\subsubsection{Lattice structure in double dimer covers}

Similar to the lattice structure of dimer covers of a snake graph, we may introduce a lattice structure on double dimer covers of a snake graph by twisting either South and North or West and East edges of a tile $G_i$, when permitted, iteratively.
 Note that when twisting a tile containing double dimers, i.e. a tile with double blue edges, one only rotates one copy of these edges. Starting with the minimal double dimer cover, we proceed by twisting tiles until the maximal double dimer cover is reached.

See Figure \ref{fig:lattice_eg} for the lattice of double dimer covers corresponding to the running example of this section. The labels on the lattice edges indicate the face weight of the tile being rotated at each step.

\begin{figure}[h!]\scalebox{0.8}{
    \centering
\xymatrix{
&{\begin{tikzpicture}
  \draw (0,0) \rectanglepath;
  \draw (1,0) \rectanglepath;
  \draw[very thick, double, double distance=1.3pt, blue] (2,1)--(2,0);
  \draw[very thick, double, double distance=1.3pt, blue] (1,1)--(0,1);
  \draw[very thick, double, double distance=1.3pt, blue] (0,0)--(1,0);
 \draw (0.5,0.5) node{\Large $1$};
  \draw (1.5,0.5) node{\Large $2$};
\end{tikzpicture}}\ar@[red]@{-}[d]^-{\color{red}1}
\\
&{\begin{tikzpicture}
  \draw (0,0) \rectanglepath;
  \draw (1,0) \rectanglepath;
  \draw[very thick, double, double distance=1.3pt, blue] (2,1)--(2,0);
  \draw[very thick, blue]  (0,1)--(1,1);
 \draw[very thick, blue]   (0,0)--(1,0);
  \draw[very thick, blue]  (1,0)--(1,1);
  \draw[very thick, blue]  (0,0)--(0,1);
 \draw (0.5,0.5) node{\Large $1$};
  \draw (1.5,0.5) node{\Large $2$};
\end{tikzpicture}}
\ar@[red]@{-}[ld]_-{\color{red}1}\ar@[red]@{-}[rd]^-{\color{red}2}
\\
{\begin{tikzpicture}
  \draw (0,0) \rectanglepath;
  \draw (1,0) \rectanglepath;
  \draw[very thick, double, double distance=1.3pt, blue] (2,1)--(2,0);
  \draw[very thick, double, double distance=1.3pt, blue] (0,0)--(0,1);
  \draw[very thick, double, double distance=1.3pt, blue] (1,0)--(1,1);
 \draw (0.5,0.5) node{\Large $1$};
  \draw (1.5,0.5) node{\Large $2$};
\end{tikzpicture}}
\ar@[red]@{-}[rd]_-{\color{red}2}
&&
{\begin{tikzpicture}
  \draw (0,0) \rectanglepath;
  \draw (1,0) \rectanglepath;
  \draw[very thick, blue]   (0,1)--(1,1);
 \draw[very thick, blue]  (0,0)--(1,0);
  \draw[very thick, blue]  (0,0)--(0,1);
    \draw[very thick, blue]  (1,0)--(2,0);
  \draw[very thick, blue]  (2,0)--(2,1);
    \draw[very thick, blue]  (2,1)--(1,1); 
 \draw (0.5,0.5) node{\Large $1$};
  \draw (1.5,0.5) node{\Large $2$};
\end{tikzpicture}}
\ar@[red]@{-}[ld]^-{\color{red}1}
\\
&{\begin{tikzpicture}
  \draw (0,0) \rectanglepath;
  \draw (1,0) \rectanglepath;
  \draw[very thick, double, double distance=1.3pt, blue] (0,0)--(0,1);
   \draw[very thick, blue]  (1,0)--(1,1);
    \draw[very thick, blue]  (1,0)--(2,0);
  \draw[very thick, blue]  (2,0)--(2,1);
    \draw[very thick, blue] (2,1)--(1,1);
 \draw (0.5,0.5) node{\Large $1$};
  \draw (1.5,0.5) node{\Large $2$};
\end{tikzpicture}}
\ar@[red]@{-}[d]^-{\color{red}2}
\\
&{\begin{tikzpicture}
  \draw (0,0) \rectanglepath;
  \draw (1,0) \rectanglepath;
 \draw[very thick, double, double distance=1.3pt, blue] (0,0)--(0,1);
  \draw[very thick, double, double distance=1.3pt, blue] (1,0)--(2,0);
  \draw[very thick, double, double distance=1.3pt, blue] (1,1)--(2,1);
 \draw (0.5,0.5) node{\Large $1$};
  \draw (1.5,0.5) node{\Large $2$};
\end{tikzpicture}}
}}
    \caption{The lattice of double dimer covers of Example~\ref{eg_double_dim}.}
    \label{fig:lattice_eg}
\end{figure}
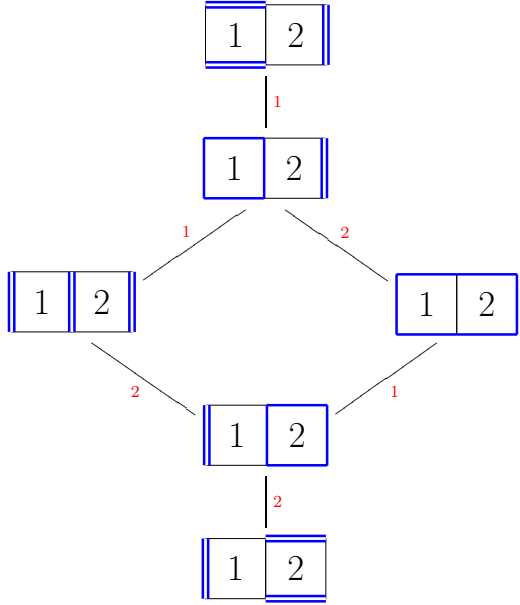

\begin{remark}
    The lattice structure on the double dimer covers of a snake graph does not depend on the face weights of the snake graph. Furthermore, one may introduce the lattice edge weights by simply indicating the position of a tile for which the two vertical edges are replaced with the two horizontal edges. Thus, we may also consider snake graphs coming from (unpunctured) marked surfaces or even abstract snake graphs with formal face (and edge) weights or with no weights at all.
\end{remark}

\section{Representation theoretic interpretation of the snake graph formula}
\label{Sec:RepT-SG}

Throughout we assume $K$ to be an algebraically closed field.  Moreover, whenever we utilize the Euler Poincar\'e characteristic $\chi(V)$ of an algebraic variety $V$, we assume $K=\mathbb{C}$. In this section, we will consider basic finite dimensional associative algebras over $K$. Such algebras can be defined by the path algebra $KQ$ of a quiver $Q$ modulo an admissible ideal $I$. For an algebra $A =KQ/I$ we denote by $P_j, I_j, S_j$ the indecomposable projective, injective, and simple $A$-modules at the vertex $j$, respectively. Note that we identify $A$-modules with $(Q,I)$-representations. We denote by $\{ e_i \}$ the usual canonical basis for the abelian group $\mathbb{Z}^{ |Q_0| }$. 
 Let $\mathrm{mod}\,A$ denote the category of finitely generated right $A$-modules. Given $M \in \mathrm{mod} \, A$, we denote by $\underline{\mathrm{dim}} (M)$ its dimension vector  and by $\left |M\right|$ the number of nonisomorphic indecomposable direct summands of $M$. As an introduction on finite dimensional algebras, we recommend \cite{assem06,s14}.

 \begin{definition}\label{def:bilinear form}
     Let $M$ and $N$ be $A$-modules where  $A = K Q/I$ is a basic finite dimensional $K$-algebra. We define an \emph{antisymmetrized bilinear form}  by
\[ \langle M, N\rangle_{A} = \mathrm{dim}_K \mathrm{Hom}_{A} (M,N) - \mathrm{dim}_K \mathrm{Hom}_{A} (N,M) - \mathrm{dim}_K  \mathrm{Ext}_{A}^1 (M,N) + \mathrm{dim}_K  \mathrm{Ext}_{A}^1 (N,M).\]
 \end{definition}

\begin{definition}\label{def:index-module} Let $A = KQ/I$ be a finite dimensional algebra and let $M$ be an $A$-module. We define the \emph{index} of $M$ as the $\mathbb{Z}^{ |Q_0| }$ vector
\[ \mathrm{ind}_A (M) =  [N^1] - [N^0], \] 
where $N^0$ and $N^1$ arise from a minimal injective resolution $ 0 \to M \to N^0 \to N^1$ of $M$ in $\mathrm{mod} \, A$, and $[N] = \sum_i  m_i e_i$ if we have $N \simeq \bigoplus_i I_i^{m_i}$.
\end{definition}

Given a triangulation $T$ of a marked surface, we can define the associated Jacobian algebra $A_T = KQ_T / I_T $. In the case of unpunctured surfaces this definition was given in \cite{ass10}. Notice that when the triangulation $T$ has no internal triangles, that is there are no triangles where the three edges belong to $T$, we have $I_T=0$. The algebra $A_T$ is a gentle algebra, hence each string defines a quiver representation. The reader can find the details in \cite{BuR87}. Each (generalized) arc $\gamma$ defines an indecomposable string $A_T$-module $M_\gamma$. 

 \begin{remark}\label{rem: g vector - notation}
    The \emph{$g$-vector} of a module $M\in\text{mod}\,A$ is defined as $[P^0]-[P^1]$ where $P^1\to P^0\to M\to 0$ is the minimal projective presentatiton of $M$.   The index and the $g$-vector are closely related. By definition of the Auslander-Reiten translation $\tau$, if $M$ is nonprojective then its $g$-vector equals $\mathrm{ind}_{A} (\tau M)$. 

 \end{remark}

\subsection{The CC-map} \label{subsect: CC map}

In addition to the combinatorial formula given in Definition~\ref{def:expansion formula}, cluster variables may also be expressed homologically by the CC-map.  This is due to Caldero and Chapoton \cite{Cch} in type $A$ and to Palu \cite{palu08} in the general setting of 2-CY categories admitting cluster-tilting objects. 

Following \cite{pla18}, we will first recall the submodule Grassmannians. 
Let $Q$ be a finite quiver and $I$ an admissible ideal. Let $V= (V_i,V_a)$, where $i \in Q_0$ and $a \in Q_1$ is an arrow $a:s(a)\to t(a)$, be a representation of $(Q,I)$. A \emph{subrepresentation} $W$ of $V$ is a tuple $(W_i)_{i \in Q_0}$ such that
\begin{enumerate}
    \item $W_i$ is a $K$-subspace of $V_i$,
    \item for each $a\in Q_1$ we have that $V_a (W_{s(a)})$ is a $K$-subspace of $W_{t(s)}$.
\end{enumerate}
Let $\mathbf{e} \in \mathbb{N}^{|Q_0|}$ be a dimension vector. The \emph{submodule Grassmannian of $V$ of dimension $\mathbf{e}$} is the subset  $\mathrm{Gr}_{\mathbf{e}} (V)$ of $\prod_{i \in Q_0} \mathrm{Gr}_{e_i} (V_i)$ given by all points defining a subrepresentation of $V$.

For $(S,M)$ a marked surface, $T$ a triangulation and $\gamma$ an arc that is not in $T$ and $M_\gamma$  the indecomposable $A_T$-module associated to $\gamma$ in the Jacobian algebra $A_T$, set
\[ \textit{CC }(M_\gamma) 
= X^{\mathrm{ind}_{A_T} (M_\gamma)  } \sum\limits_{\mathbf{e}\, \in \mathbb{Z}^n }\chi (\mathrm{Gr}_{\mathbf{e}}( M_\gamma ) )  \prod\limits_{i=1}^n x_i^{\langle S_i , \mathbf{e} \rangle} 
\]
where $\mathbf{e}  = \underline{\mathrm{dim}} ( \bigoplus_j  S_j^{m_j})$, $\vert (Q_T)_0 \vert =n $, $\langle - , - \rangle = \langle - , - \rangle_{A_T}$ is the antisymmetrized bilinear form, and  we denote $\langle S_i , \mathbf{e} \rangle = \langle S_i , \oplus_j S_j^{m_j} \rangle$.

\begin{theorem}\label{thm:cc}
Let  (S,M) be an unpunctured surface, $T$ a triangulation and $\gamma$ an arc that is not in $T$. Let $x_\gamma$ be the cluster variable associated to $\gamma$ in the cluster algebra $\mathcal{A}(S,M)$ and $M_\gamma$ be the indecomposable module associated to $\gamma$ over the Jacobian algebra $A_T$. Then $\textit{CC }(M_\gamma)=x_\gamma$.
\end{theorem}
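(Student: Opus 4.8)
\textbf{Proof plan for Theorem~\ref{thm:cc}.}
The statement asserts that the (classical) Caldero--Chapoton map on the Jacobian algebra $A_T$ of an unpunctured surface recovers the snake-graph expansion formula of Definition~\ref{def:expansion formula}. The plan is to prove this by matching the CC-map term by term with the weighted sum over dimer covers, using the lattice bijection between dimer covers of $\mathcal{G}_\gamma$ and submodules of $M_\gamma$. First I would recall that $M_\gamma$ is the string module over the gentle algebra $A_T$ associated to the string read off from the crossing sequence $\gamma_1,\dots,\gamma_d$ of $\gamma$ with $T$, so that $\underline{\dim}(M_\gamma)$ records, for each $i$, the number of tiles of $\mathcal{G}_\gamma$ with face weight $i$; hence $\prod_i x_i^{(\underline{\dim} M_\gamma)_i}=\cross(\gamma)$.

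The core of the argument is the known combinatorial fact (see \cite{MSW11}, and for the representation-theoretic version \cite{ass10} and the lattice statement proved later in this paper) that the perfect matchings $P$ of $\mathcal{G}_\gamma$ are in bijection with the submodules (equivalently quotient modules) of $M_\gamma$, in such a way that if $P$ corresponds to the submodule $N$ with $\underline{\dim}(M_\gamma/N)=\mathbf{e}$, then the edge weight $\wt(P)$ equals $\cross(\gamma)$ times the monomial $\prod_i x_i^{\langle S_i,\mathbf{e}\rangle}$ up to the fixed shift $X^{\mathrm{ind}_{A_T}(M_\gamma)}$. Concretely, I would (i) identify, via the string combinatorics, which submodules of a string module exist — these are exactly the ``connected substring'' pieces, and a general submodule is a direct sum of such — and check that the submodule Grassmannian $\mathrm{Gr}_{\mathbf{e}}(M_\gamma)$ is a point or empty for each $\mathbf{e}$ (string modules over gentle algebras are multiplicity-free along the string in type $A$ / unpunctured, so $\chi(\mathrm{Gr}_{\mathbf{e}})\in\{0,1\}$), matching the fact that the dimer covers with a prescribed ``twist region'' are uniquely determined; (ii) compute $\mathrm{ind}_{A_T}(M_\gamma)$ from the minimal injective resolution of the string module and verify that $X^{\mathrm{ind}_{A_T}(M_\gamma)}$ is exactly $\tfrac{1}{\cross(\gamma)}\cdot(\text{monomial for the minimal dimer cover }P_{\min})$, i.e.\ the shift accounts for the ``denominator'' $1/\cross(\gamma)$ together with the boundary-edge contribution of $P_{\min}$ (which is trivial once boundary variables are set to $1$); (iii) check that the bilinear-form exponent $\langle S_i,\mathbf{e}\rangle_{A_T}$ records precisely the change in edge weights when passing from $P_{\min}$ to the dimer cover $P$ associated with a submodule of dimension complement $\mathbf{e}$, i.e.\ each elementary twist of a tile $G_i$ corresponds to adding a simple composition factor $S_i$ to the quotient and multiplies the weight by the appropriate ratio of face/edge variables.

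Assembling these, the CC-sum $\sum_{\mathbf{e}}\chi(\mathrm{Gr}_{\mathbf{e}}(M_\gamma))\prod_i x_i^{\langle S_i,\mathbf{e}\rangle}$ becomes a sum over submodules $N\subseteq M_\gamma$, which by the bijection is a sum over dimer covers $P$, and term by term $X^{\mathrm{ind}_{A_T}(M_\gamma)}\prod_i x_i^{\langle S_i,\mathbf{e}(N)\rangle}=\tfrac{1}{\cross(\gamma)}\wt(P)$; summing gives $\tfrac{1}{\cross(\gamma)}\sum_{P\in\mathcal{D}(\mathcal{G}_\gamma)}\wt(P)=x_{\mathcal{G}_\gamma}$, which equals $x_\gamma$ by \cite[Theorem 10.1]{MSW11}. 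I would also separately dispose of the trivial case $\gamma=\gamma_i\in T$, where $M_\gamma=0$ and the CC-map returns $X^{\mathrm{ind}_{A_T}(0)}=x_i$ (the index of the zero module is designed to give the initial variable, matching $P_i[1]$ in the shifted-projective convention used later).

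The main obstacle is step (iii): establishing the precise numerical identity $\wt(P)/\wt(P_{\min}) = \prod_i x_i^{\langle S_i,\mathbf{e}\rangle}\cdot\cross(\gamma)^{\text{correction}}$, i.e.\ that the antisymmetrized Euler form on the Jacobian algebra exactly reproduces the edge-weight bookkeeping of the snake graph. This requires a careful analysis of $\mathrm{Hom}$ and $\mathrm{Ext}^1$ between the simples $S_i$ and the string submodules/quotients — using that over a gentle algebra these dimensions are computed by counting (graph) maps and arrow/relation incidences along strings — and then checking that the combination $\dim\mathrm{Hom}(S_i,-)-\dim\mathrm{Hom}(-,S_i)-\dim\mathrm{Ext}^1(S_i,-)+\dim\mathrm{Ext}^1(-,S_i)$ collapses to the single integer by which the $x_i$-exponent in $\wt(P)$ differs from that in $\wt(P_{\min})$. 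This is exactly the computation carried out in the unpunctured case in the literature; here it is the technical heart that we must either cite or reprove in the form needed for the subsequent super-generalisation.
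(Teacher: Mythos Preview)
The paper does not give its own proof of Theorem~\ref{thm:cc}; it is stated as a known result, attributed in the preceding paragraph to Caldero--Chapoton \cite{Cch} in type~$A$ and to Palu \cite{palu08} in the general 2-CY setting. The term-by-term comparison you outline is precisely the content of Theorem~\ref{lemma-brustle-zhang} and Remark~\ref{remark_BZ_hidden}, which the paper again cites from \cite[Section~5.3]{bz13} and \cite[Theorem~3.18]{CS21} rather than proving. So your plan is the right \emph{mechanism}, and it matches what the paper uses downstream.

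There is, however, a genuine gap in your argument as written. You assert that $\chi(\mathrm{Gr}_{\mathbf{e}}(M_\gamma))\in\{0,1\}$ because string modules are ``multiplicity-free along the string in type~$A$ / unpunctured''. This is false for a general unpunctured surface: on an annulus or higher-genus surface, an arc $\gamma$ may cross the same arc of $T$ several times, so $M_\gamma$ can have dimension $\geq 2$ at some vertex, and several canonical substring submodules can share the same dimension vector. In that case $\chi(\mathrm{Gr}_{\mathbf{e}}(M_\gamma))$ is the \emph{number} of such canonical submodules, not $0$ or $1$, and it is exactly this count that matches the number of dimer covers of $\mathcal{G}_\gamma$ inducing the given height/face function. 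Your steps (ii) and (iii) then still go through per dimer cover, but the bijection is between dimer covers and \emph{canonical submodules}, with the Euler characteristic absorbing the multiplicity; the argument does not reduce to a bijection with dimension vectors. (The $\{0,1\}$ claim is valid only in the polygon case, which is indeed where the paper's later Lemma~\ref{lemma2:grassmannians computation equal 1} uses it.)

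A minor aside: in your trivial case, $\mathrm{ind}_{A_T}(0)=0$ gives $X^{0}=1$, not $x_i$; the initial variable is recovered via the shifted projective $P_i[1]$ convention, not via the zero module. Since the theorem explicitly assumes $\gamma\notin T$, this case is irrelevant anyway.
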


In \cite[Section 5.3]{bz13}, the authors compare the snake graph formula and the CC-map for cluster algebras coming from unpunctured surfaces. Combining this with \cite[Theorem 3.18]{CS21}, we obtain the following result.

\begin{theorem}\label{lemma-brustle-zhang} In the setting of Theorem~\ref{thm:cc}, let $\mathcal{G_\gamma}$ be the snake graph associated to $\gamma$. Then  there is a 1-1 correspondence between the terms in the two formulae for $CC(M_\gamma)$ and $x_\mathcal{G_\gamma}$.
\end{theorem}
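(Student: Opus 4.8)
The plan is to set up an explicit bijection between the dimer covers of the snake graph $\mathcal{G}_\gamma$ and the pairs $(\mathbf{e}, \text{point of } \mathrm{Gr}_{\mathbf{e}}(M_\gamma))$ that contribute to the Euler characteristic sum in $CC(M_\gamma)$, and to check that under this bijection the monomial $\frac{1}{\cross(\gamma)}\wt(P)$ equals $X^{\mathrm{ind}_{A_T}(M_\gamma)}\prod_i x_i^{\langle S_i, \mathbf{e}\rangle}$. The two key inputs are already available: \cite[Section 5.3]{bz13}, which performs exactly this comparison of the two formulae for cluster algebras from unpunctured surfaces, and \cite[Theorem 3.18]{CS21}, which (over $K=\mathbb{C}$) identifies each submodule Grassmannian $\mathrm{Gr}_{\mathbf{e}}(M_\gamma)$ of the string module $M_\gamma$ with a single point or the empty set, so that $\chi(\mathrm{Gr}_{\mathbf{e}}(M_\gamma)) \in \{0,1\}$ and the CC-sum is literally a sum over the nonempty submodule Grassmannians, i.e.\ over the submodule lattice of $M_\gamma$.

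First I would recall that, since $A_T$ is gentle with $I_T = 0$ (no internal triangles), $M_\gamma$ is a string module whose string reads off the tiles $G_1, \dots, G_d$ of $\mathcal{G}_\gamma$; its submodules are the ``closed under successor'' subsets of the string basis, and these are classically in bijection with the dimer covers $P \in \mathcal{D}(\mathcal{G}_\gamma)$ (this is the correspondence underlying the lattice isomorphism $\mathcal{L}(\mathcal{G}_\gamma) \cong$ submodule lattice of $M_\gamma$ that \cite{CS21} makes precise). Under this identification, the minimal dimer cover $P_{\min}$ corresponds to the zero submodule and $P_{\max}$ to $M_\gamma$ itself. Next I would translate weights: for a dimer cover $P$ corresponding to a submodule $N \le M_\gamma$ with quotient of dimension vector $\mathbf{e} = \underline{\dim}(M_\gamma/N)$ decomposed into simples, one shows $\frac{\wt(P)}{\cross(\gamma)} = X^{\mathrm{ind}_{A_T}(M_\gamma)} \prod_{i=1}^n x_i^{\langle S_i, \mathbf{e}\rangle}$ by a direct bookkeeping argument — this is precisely the content of the comparison in \cite[Section 5.3]{bz13}, where the interior/boundary edge contributions of $P$ are matched against the exponents coming from the index and the bilinear form. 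Combining these two pieces term-by-term gives the claimed 1--1 correspondence.

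Concretely, the argument reduces to: (i) invoke \cite[Theorem 3.18]{CS21} to rewrite $CC(M_\gamma) = X^{\mathrm{ind}_{A_T}(M_\gamma)} \sum_{N \le M_\gamma} \prod_i x_i^{\langle S_i, \underline{\dim}(\oplus_j S_j^{m_j})\rangle}$, a sum over the finite submodule lattice; (ii) use the classical lattice isomorphism between submodules of $M_\gamma$ and dimer covers of $\mathcal{G}_\gamma$ to reindex this sum over $\mathcal{D}(\mathcal{G}_\gamma)$; (iii) invoke \cite[Section 5.3]{bz13} to see that the corresponding summands agree as monomials. Each step is a citation plus a short verification, and the result follows.

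The main obstacle I anticipate is bookkeeping rather than conceptual: one must be careful that the conventions in \cite{bz13} (which side of the string is ``closed'', orientation of tiles, the shift between $g$-vectors and indices noted in Remark~\ref{rem: g vector - notation}, and the normalization setting boundary segments $b_i = 1$) match the conventions used here for $\mathrm{ind}_{A_T}$, for the antisymmetrized form $\langle -, - \rangle_{A_T}$, and for the snake graph tiles as constructed in Subsection~\ref{subsec:snake-graph-single-dimer}. Verifying that these conventions line up — in particular that the exponent $\langle S_i, \mathbf{e}\rangle$ of $x_i$ produced by the homological formula equals (number of times edge $i$ appears in $P$) minus (multiplicity of $i$ among the face weights of $\mathcal{G}_\gamma$), after clearing $\cross(\gamma)$ — is the only place where genuine care is needed; everything else is a direct appeal to the two cited results.
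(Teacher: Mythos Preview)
Your proposal is correct and follows exactly the paper's approach: the paper simply states that the result is obtained by combining \cite[Section 5.3]{bz13} with \cite[Theorem 3.18]{CS21}, and your write-up is a faithful elaboration of how those two citations fit together. Two small slips worth fixing: the restriction to $I_T=0$ is unnecessary (the cited results handle general unpunctured surfaces and gentle $A_T$), and in the paper's conventions $\mathbf{e}=\underline{\dim}(N)$ is the dimension vector of the \emph{submodule}, not the quotient $M_\gamma/N$.
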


\begin{remark}\label{remark_BZ_hidden}
    The correspondence mentioned in the above theorem works as follows.  Recall there is a $1$-$1$ correspondence between dimer covers $P_N$ of $\mathcal{G}_\gamma$ and elements $N$ of the canonical submodule lattice of $M_\gamma$. In general the two terms $1/\textup{cross}(\gamma)$ and $X^{\textup{ind}_{A_T}(M_\gamma)}$ are not equal. However, we have the following equality
    \begin{align*}
    \dfrac{1}{\textup{cross}(\gamma)}\dfrac{\textup{wt}(P_{\textup{min}})}{\textup{wt}(P_{\textup{min}})}\textup{wt}(P_N)=X^{\textup{ind}_{A_T}(M_\gamma)}\prod\limits_{i=1}^n x_i^{\langle S_i , \oplus_j S_j^{m_j}\rangle},
    \end{align*}
    where $\underline{\textup{dim}} (N) = \underline{\textup{dim}}(\oplus_j S_j^{m_j})$.
    Note that, since $P_{\textup{min}}$ corresponds to $N=0$, we have that
    \begin{align*}
        \dfrac{\textup{wt}(P_{\textup{min}})}{\textup{cross}(\gamma)}=X^{\textup{ind}_{A_T}(M_\gamma)},
    \end{align*}
and hence  \begin{align*}
        \dfrac{\textup{wt}(P_{N})}{\textup{wt}(P_{\textup{min}})}= \prod\limits_{i=1}^n x_i^{\langle S_i , \oplus_j S_j^{m_j}\rangle}.
    \end{align*}
For example, consider the left-most dimer cover $P_N$ in the lattice in Figure \ref{fig:lattice-1}. Then the corresponding left hand side term is
    \begin{align*}
        \dfrac{1}{x_1x_2x_3x_4} x_1 x_3 x_5.
    \end{align*}
    Then $M_\gamma=\begin{smallmatrix}&3\\2&&4\\1\end{smallmatrix}$ and the submodule corresponding to $P_N$ is $N=\begin{smallmatrix}2\\1\end{smallmatrix}$. The right hand side is then
    \begin{align*}
        \dfrac{x_3 x_5}{x_1 x_4} \dfrac{x_1}{x_2x_3}.
    \end{align*}
\end{remark}

\subsection{Cluster category of type $A$}\label{subsec-cluster category}

The cluster category was defined by \cite{bmrrt}. Simultaneously the geometric cluster category $\mathcal{C}$ of Dynkin type $A_n$ was introduced in \cite{CCS}, where the realization is given in terms of the internal diagonals of the $(n+3)$-gon. Moreover, the categorical definition can be extended and include the edges of the polygon. This realization would correspond to the Frobenius cluster category $\mathcal{C}_{\mathcal{F}}$ of type $A_n$, associated to the Grassmannian of type $G_{2,n+3}$ from \cite{JKS16}. The stable category $\underline{\mathcal{C}}_{\mathcal{F}}$ is triangle equivalent to the usual cluster category $\mathcal{C}$.

We briefly describe the geometric realization for the \emph{Auslander-Reiten quiver} of $\mathcal{C}_{\mathcal{F}}$, the cluster category $\mathcal{C}$, and for the module category $\mathrm{mod} \, A_T$.  

\subsubsection{The Frobenius category $\mathcal{C}_{\mathcal{F}}$}\label{sub-frobenuis-category} Take the regular $(n+3)$-gon and label the vertices from 0 to $n+2$ counter-clockwise. Then, there is an indecomposable object $X_{i,j}$ for each diagonal $(i,j)$, where $(i,j)$ and $(j,i)$ represent the same diagonal since they are not considered with orientation, and the indices $i,j$ are considered modulo $n+3$. We include the objects that are represented by edges of the polygon, labeled $(i,i+1)$, these are the \emph{projective-injective objects} for $\mathcal{C}_{\mathcal{F}}$.

 There is an arrow $(i,j) \to (i+1,j)$ and  an arrow $(i,j) \to (i,j+1)$ for each $i,j$ representing an irreducible morphism in the category, as depicted in Figure~\ref{fig:ARquiver}. The mesh category is completed by the following data: the morphims $f$ and $g$ given by the compositions
\[ f = X_{i,j} \to X_{i,j+1} \to X_{i+1,j+1} \ \ \ \mathrm{and} \ \ \ g = X_{i,j} \to X_{i+1,j} \to X_{i+1,j+1}, \]
are linearly dependent. Hence, the dimension of $\mathrm{Hom}(X_{i,j},X_{k,l}) $ as a $K$-vector space can be computed easily observing the possible linearly independent paths on the mesh. In particular, whenever there is a straight diagonal path between two objects, there is  a one-dimensional space of morphisms between them.

\begin{figure}
  \centering
    \begin{tikzpicture}[scale=1]
\draw (0,0)  node{$\scriptstyle (0,2)$};
\draw (-1,-1)  node{$\scriptstyle (0,1)$};
\draw (1,-1)  node{$\scriptstyle (1,2)$};
\draw (1,1)  node{$\scriptstyle (0,3)$};
\draw (2,2)  node{$\scriptstyle \bullet$};
\draw (5,-1)  node{$\scriptstyle \bullet$};
\draw (7,-1)  node{$\scriptstyle (n,n+1)$};
\draw (9,-1)  node{$\scriptstyle (n+1,n+2)$};
\draw (3,3)  node{$\scriptstyle (0,n+1)$};
\draw (11,-1)  node{$\scriptstyle (n+2,0)$};
\draw (4,4)  node{$\scriptstyle (0,n+2)$};

\draw (2,0)  node{$\scriptstyle (1,3)$};
\draw (3,1)  node{$\scriptstyle (1,4)$};
\draw (5,1)  node{$\scriptstyle \bullet$};
\draw (6,0)  node{$\scriptstyle \bullet$};

\draw (5,3)  node{$\scriptstyle (1,n+2)$};
\draw (6,2)  node{$\scriptstyle \bullet$};
\draw (7,1)  node{$\scriptstyle (n-1,n+2)$};
\draw (8,0)  node{$\scriptstyle (n,n+2)$};

\draw [dashed] (0.4,0)--(1.6,0);
\draw [dashed] (2.4,0)--(3.4,0);
\draw [dashed] (4.6,0)--(5.7,0);
\draw [dashed] (6.4,0)--(7.4,0);
\draw [dashed] (8.6,0)--(9.4,0);
\draw [dashed] (10.6,0)--(11.4,0);

\draw [dashed] (1.4,1)--(2.6,1);
\draw [dashed] (5.4,1)--(6.3,1);
\draw [dashed] (7.7,1)--(8.6,1);
\draw [dashed] (9.4,1)--(10.5,1);

\draw [dashed] (2.3,2)--(3.2,2);
\draw [dashed] (4.8,2)--(5.8,2);
\draw [dashed] (6.3,2)--(7.8,2);
\draw [dashed] (8.3,2)--(9.8,2);

\draw [dashed] (3.6,3)--(4.4,3);
\draw [dashed] (5.6,3)--(6.6,3);
\draw [dashed] (7.6,3)--(8.6,3);

\draw [->] (0.2,0.2)--(0.8,0.8);
\draw [->] (1.2,1.2)--(1.8,1.8);
\draw [dotted] (2.2,2.2)--(2.8,2.8);
\draw [->] (3.2,3.2)--(3.8,3.8);
\draw [->] (-0.8,-0.8)--(-0.2,-0.2);

\draw [<-] (0.8,-0.8)--(0.2,-0.2);
\draw [<-] (6.8,-0.8)--(6.2,-0.2);
\draw [<-] (8.8,-0.8)--(8.2,-0.2);
\draw [<-] (10.8,-0.8)--(10.2,-0.2);
\draw [<-] (12.8,-0.8)--(12.2,-0.2);
\draw [->] (1.2,-0.8)--(1.8,-0.2);
\draw [->] (5.2,-0.8)--(5.8,-0.2);
\draw [->] (7.2,-0.8)--(7.8,-0.2);
\draw [->] (9.2,-0.8)--(9.8,-0.2);
\draw [->] (11.2,-0.8)--(11.8,-0.2);
\draw [->](4.2,3.8)--(4.8,3.2);
\draw[dotted](5.2,2.8)--(5.8,2.2);
\draw [->](6.2,1.8)--(6.8,1.2);
\draw [->](7.2,0.8)--(7.8,0.2);

\draw[very thick, dotted](3.7,2)--(4.3,2);
\draw[very thick, dotted](3.7,1)--(4.3,1);
\draw[very thick, dotted](3.7,0)--(4.3,0);
\draw[very thick, dotted](3.7,-1)--(4.3,-1);

\draw [->] (1.2,0.8)--(1.8,0.2);
\draw [->] (2.2,0.2)--(2.8,0.8);
\draw [->] (2.2,1.8)--(2.8,1.2);
\draw [->] (5.2,1.2)--(5.8,1.8);
\draw [->] (5.2,0.8)--(5.8,0.2);
\draw [->] (6.2,0.2)--(6.8,0.8);

\draw (6,4)  node{$\scriptstyle (1,0)$};
\draw (7,3)  node{$\scriptstyle (2,0)$};
\draw (8,4)  node{$\scriptstyle (2,1)$};
\draw (9,3)  node{$\scriptstyle (3,1)$};
\draw (10,2)  node{$\scriptstyle \bullet$};
\draw (8,2)  node{$\scriptstyle \bullet$};
\draw (9,1)  node{$\scriptstyle (n,0)$};
\draw (10,0)  node{$\scriptstyle (n+1,0)$};
\draw (11,1)  node{$\scriptstyle (n+1,1)$};
\draw (12,0)  node{$\scriptstyle (n+2,1)$};
\draw (13,-1)  node{$\scriptstyle (0,1)$};

\draw [->] (5.2,3.2)--(5.8,3.8);
\draw [->] (6.2,3.8)--(6.8,3.2);
\draw [->] (7.2,3.2)--(7.8,3.8);
\draw [dotted] (7.2,2.8)--(7.8,2.2);
\draw[->] (8.2,1.8)--(8.8,1.2);
\draw[->] (9.2,0.8)--(9.8,0.2);
\draw[->] (7.2,1.2)--(7.8,1.8);
\draw[->] (8.2,0.2)--(8.8,0.8);

\draw[->] (10.2,0.2)--(10.8,0.8);

\draw[->](8.2,3.8)--(8.8,3.2);
\draw[dotted](9.2,2.8)--(9.8,2.2);
\draw[->](10.2,1.8)--(10.8,1.2);
\draw[->](9.2,1.2)--(9.8,1.8);
\draw[->] (11.2,0.8)--(11.8,0.2);

\end{tikzpicture}
\caption{Auslander-Reiten quiver of the (Frobenius) cluster category $C_{\mathcal{F}}$ of type $A$.}
    \label{fig:ARquiver}
\end{figure}
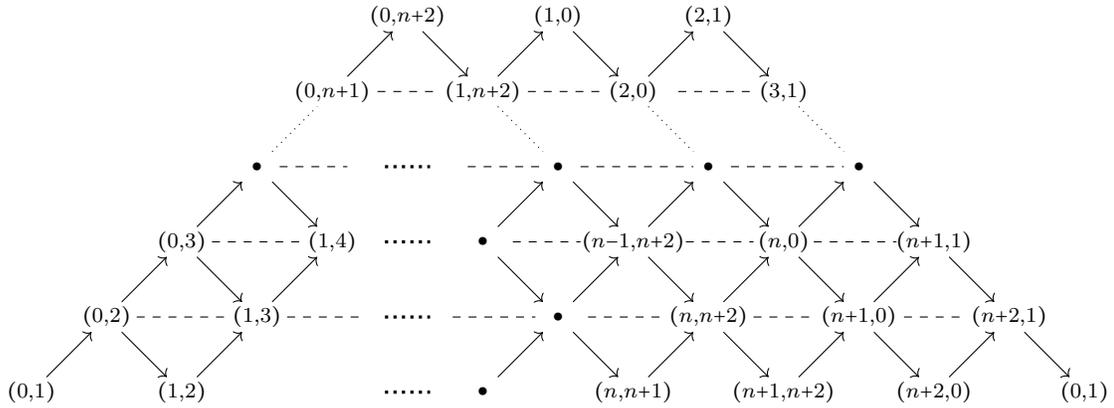

The \emph{Auslander-Reiten} functor $\tau$ is given by $\tau X_{i,j} = X_{i-1,j-1}$, reciprocally $\tau^{-1} X_{i,j} = X_{i+1,j+1}$, whenever $X_{i,j}$ is not projective-injective.

A path $X_1\to \dots \to X_t$ in the Auslander-Reiten quiver is \emph{sectional} if $\tau X_{i+1}\not=X_{i-1}$ for all $i=2, \dots, t-1$. For our category of interest all sectional paths are straight diagonal paths.

\subsubsection{The module category $\mathrm{mod} \, A_T$}\label{subsection_modA} 
Given a triangulation $T$ of the polygon, we can define a Jacobian algebra $A_T$. The Auslander-Reiten quiver for the module category $\mathrm{mod} \, A_T$ is obtained by simply removing the objects $X_{i,j}$ associated to the diagonals in $T$ and the edge objects $X_{i,i+1}$. There is a correspondence 
\[ \mathrm{diagonals \ not \ in} \ T \leftrightarrow \   \mathrm{indecomposable} \ A_T- \mathrm{modules}.\] 
This correspondence follows from a classical result in cluster theory \cite[Theorem 2.2]{BMR} and the above mentioned geometric cluster category \cite{CCS}. For a given diagonal $\gamma \notin T$, the dimension vector $\underline{\mathrm{dim}} (M_\gamma)$ is given by the crossings between $\gamma$ and $T$. In particular, the indecomposable projective module $P_a$ associated to the diagonal $a=(i+1,j+1)$ appears as $\tau^{-1} X_{i,j} = X_{i+1,j+1}$. In this case the object $X_{i,j}$, associated to the diagonal $(i,j)\in T$, is called the \emph{shifted projective} $P_a [1]$. 


\section{Tensoring with the algebra of dual numbers}
\label{Sec:tensoring}

Let $\La$ be a finite dimensional algebra over a field $K=\overline{K}$. Also, let $K[\epsilon]/(\epsilon^2)$ denote the 2-dimensional local algebra called the algebra of dual numbers, obtained by taking the quotient of the polynomial ring $K[\epsilon]$. We consider the tensor product of $\La$ with the dual numbers
\[\Ltil \colon = \La \otimes_K K[\epsilon]/(\epsilon^2).\]
Ringel and Zhang studied homological properties of $\Ltil$ in \cite{ringel17}. Later, a generalization of this algebra appeared in \cite{geiss17}, where instead of the dual numbers the authors considered $K[\epsilon]/(\epsilon^m)$ and proposed a way to associate quiver representations to Cartan matrices of Dynkin type with nontrivial symmetrizers.  

Note that $\Ltil$ is isomorphic to $\La\oplus \La$ as a $\La$-module.  Moreover, it is easy to see that if $\La=KQ/I$ then the quiver of $\Ltil$ is obtained from $Q$ by adding a loop $\epsilon_i$ for every vertex $i$ and the relations of $\Ltil$ are the same as in $\La$ together with $\epsilon_i^2=0$ and $\epsilon_i\alpha=\alpha \epsilon_j$ for every arrow $\alpha$ in $Q$ starting at $i$ and ending in $j$. See Example \ref{example_FN}. Thus, we obtain the following result. 

\begin{lemma}\label{lemma_bilinear_equal}
    The antisymmetrized bilinear form applied to simple modules over the algebras $\La$ and $\Ltil$ coincide, that is $\langle S_i, S_j\rangle_{\La}=\langle S_i, S_j\rangle_{\Ltil}$ for all $i,j$.
\end{lemma}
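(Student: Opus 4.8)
The plan is to reduce the bilinear form on $\Ltil$ to that on $\La$ by exploiting the explicit description of $\Ltil$ as $\La\oplus\La$ as a $\La$-module, together with the fact that a simple $\Ltil$-module $S_i$ is annihilated by all the loops $\epsilon_i$ and hence is simply the inflation of the simple $\La$-module $S_i$ along the canonical surjection $\Ltil\twoheadrightarrow\La$. First I would observe that for two such simples, any $\Ltil$-linear map between them is in particular $K$-linear and commutes with the action of $Q_1$, and since both are annihilated by the loops, being $\Ltil$-linear imposes no extra condition beyond being $\La$-linear; therefore $\mathrm{Hom}_{\Ltil}(S_i,S_j)\cong\mathrm{Hom}_{\La}(S_i,S_j)$, which is $K$ if $i=j$ and $0$ otherwise. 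This already handles the first two terms in the definition of $\langle-,-\rangle$.

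The substantive point is the comparison of the $\mathrm{Ext}^1$ terms. Here I would use the standard description of $\mathrm{Ext}^1_{\Ltil}(S_i,S_j)$ in terms of the quiver with relations of $\Ltil$: namely $\dim_K\mathrm{Ext}^1_A(S_i,S_j)$ equals the number of arrows from $i$ to $j$ in the quiver of $A$ that are not "used up" by relations, more precisely $\dim_K\mathrm{Ext}^1_A(S_i,S_j) = \dim_K(e_j(\mathrm{rad}\,A/\mathrm{rad}^2 A)e_i)$ for the relevant convention. By the explicit presentation of $\Ltil$ recalled just before the lemma, its quiver is that of $\La$ with one added loop $\epsilon_i$ at each vertex, and its relations are those of $\La$ together with $\epsilon_i^2=0$ and the commutation relations $\epsilon_i\alpha=\alpha\epsilon_j$. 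The key algebraic observation is that $\mathrm{rad}^2$ is unaffected in the "arrow directions" that matter: for $i\neq j$, the arrows $i\to j$ in $\Ltil$ are exactly the arrows $i\to j$ in $\La$ (no loop contributes), and the commutation relations $\epsilon_i\alpha=\alpha\epsilon_j$ lie in $\mathrm{rad}^2$ and so do not change the count of arrows modulo $\mathrm{rad}^2$; hence $\mathrm{Ext}^1_{\Ltil}(S_i,S_j)\cong\mathrm{Ext}^1_{\La}(S_i,S_j)$ for $i\neq j$. For $i=j$ there is the additional loop $\epsilon_i$, so $\dim_K\mathrm{Ext}^1_{\Ltil}(S_i,S_i)=\dim_K\mathrm{Ext}^1_{\La}(S_i,S_i)+1$.

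Now I would assemble the four terms. Since the $\mathrm{Hom}$-terms agree on the nose and the $\mathrm{Ext}^1$-terms agree except on the diagonal, we get $\langle S_i,S_j\rangle_{\Ltil}=\langle S_i,S_j\rangle_{\La}$ for $i\neq j$ immediately; and for $i=j$, the two extra copies of $\mathbb{C}$ in $\mathrm{Ext}^1_{\Ltil}(S_i,S_i)$ — one appearing as $+\mathrm{dim}\,\mathrm{Ext}^1(S_i,S_i)$ with a minus sign and one as $+\mathrm{dim}\,\mathrm{Ext}^1(S_i,S_i)$ with a plus sign in the antisymmetrized form — cancel, so $\langle S_i,S_i\rangle_{\Ltil}=\langle S_i,S_i\rangle_{\La}$ as well. (In fact both sides of the diagonal are just $0$ by antisymmetry, so this case is essentially automatic, but the cancellation makes the bookkeeping transparent.) The main obstacle, such as it is, is being careful that the added relations of $\Ltil$ genuinely contribute nothing to $\mathrm{rad}/\mathrm{rad}^2$ in the off-diagonal directions and that the single new loop contributes exactly one dimension to each diagonal $\mathrm{Ext}^1$; once the presentation of $\Ltil$ is in hand this is a short check. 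Alternatively, one can bypass the quiver computation entirely: using $\Ltil\cong\La\otimes_K K[\epsilon]/(\epsilon^2)$ and flatness of $K[\epsilon]/(\epsilon^2)$ over $K$, a Künneth-type argument gives $\mathrm{Ext}^i_{\Ltil}(S_m\otimes K,S_n\otimes K)\cong\mathrm{Ext}^i_{\La}(S_m,S_n)\otimes_K\mathrm{Ext}^0_{K[\epsilon]}(K,K)\ \oplus\ \mathrm{Ext}^{i-1}_{\La}(S_m,S_n)\otimes_K\mathrm{Ext}^1_{K[\epsilon]}(K,K)$, and since $S_i$ as a $\Ltil$-module is precisely $S_i\otimes_K K$ (the residue field of $K[\epsilon]/(\epsilon^2)$), plugging into the antisymmetrized form and using that the "shifted" contribution enters with both signs yields the cancellation and hence the claim. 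I would present whichever of the two arguments is shortest given what the paper has already set up, most likely the quiver-with-relations one since the presentation of $\Ltil$ is stated immediately above.
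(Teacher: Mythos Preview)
Your proposal is correct and follows essentially the same route as the paper: dispose of the diagonal case by antisymmetry, observe that $\mathrm{Hom}$ between distinct simples vanishes over both algebras, and for $i\neq j$ compare $\mathrm{Ext}^1$ by counting arrows from $i$ to $j$ in the two quivers, which agree since passing to $\Ltil$ only adds loops. The paper's version is terser (it simply asserts that $\dim\mathrm{Ext}^1(S_i,S_j)$ equals the number of arrows $i\to j$ without invoking $\mathrm{rad}/\mathrm{rad}^2$ or worrying about the new relations), and it does not mention the K\"unneth alternative, but the substance is the same.
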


\begin{proof}
By definition of the bilinear form the statement clearly holds if $i=j$.  Now, suppose that $i\not=j$.  In this case, $\text{Hom}_{\La} (S_i, S_j)=\text{Hom}_{\Ltil} (S_i, S_j)=0$, and it suffices to show that $\text{dim Ext}^1_{\La}(S_i,S_j)=\text{dim Ext}^1_{\Ltil}(S_i,S_j)$.  The dimension of $\text{Ext}^1(S_i,S_j)$ equals the number of arrows from $i$ to $j$. By the discussion above, the quiver of $\Ltil$ is obtained from that of $\La$ by adding a loop at each vertex, hence the two quivers have the same number of arrows between any pair of distinct vertices.  This shows the desired claim that the two extension spaces are isomorphic. 
\end{proof}

Since $\La\cong \La\otimes_K 1$ is a subalgebra of $\Ltil=\La\otimes_K K[\epsilon]/(\epsilon^2)$, and the two algebras share the same identity, there is a general construction of induction and restriction functors between their module categories.  The \emph{induction functor} is defined as follows.
\[ -\otimes_{\La} \Ltil\colon \,\, \textup{mod}\,\La \to \textup{mod}\,\Ltil\]
Since $\Ltil$ considered as a $\La$-module is projective, then the induction functor is exact.  Moreover, it takes an indecomposable projective (resp. injective) $\La$-module at vertex $i$, to an indecomposable projective (resp. injective) $\Ltil$-module at that same vertex.  If a $\Ltil$-module is in the image of the induction functor, i.e. it is of the form $M\otimes_{\La} \Ltil$, then we say it is an \emph{induced module} and we denote it by $\widetilde{M}$. In the language of \cite{geiss17}, the induced modules are also known as \emph{locally free} modules.

The restriction functor $\text{mod}\,\Ltil\to \text{mod}\,\La$ is defined by taking a $\Ltil$-module $M_{\Ltil}$ and making it a $\La$-module $M_{\La}$ by restricting the scalars from $\Ltil$ to $\La$.  It is easy to see that the restriction of the induced module $\widetilde{M_{\La}}$ is isomorphic to $M_{\La}\oplus M_{\La}$.  The induction and restriction functors form an adjoint pair. 

The following lemma says that the index behaves well under induction. 

\begin{lemma}\label{lem:index}
The module $M\in\textup{mod}\,\La$ and its induced module have the same index, that is $\mathrm{ind}_{\La} (M)=\mathrm{ind}_{\Ltil} (\widetilde{M})$.
\end{lemma}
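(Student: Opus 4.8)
The plan is to compute both indices directly from minimal injective resolutions and compare them. Recall from Definition~\ref{def:index-module} that $\mathrm{ind}_A(M) = [N^1] - [N^0]$, where $0 \to M \to N^0 \to N^1$ is a minimal injective copresentation of $M$, and $[N]$ records the multiplicities of the indecomposable injectives occurring as summands. So it suffices to show that applying the induction functor to a minimal injective copresentation of $M$ over $\La$ yields a minimal injective copresentation of $\widetilde{M}$ over $\Ltil$, with the \emph{same} multiplicities of indecomposable injectives indexed by the vertices of $Q$.

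First I would record the two properties of the induction functor $(-)\otimes_\La \Ltil$ already isolated in the paragraph preceding the lemma: it is exact (since $\Ltil$ is projective as a $\La$-module), and it sends the indecomposable injective $\La$-module $I_i$ at vertex $i$ to the indecomposable injective $\Ltil$-module $\widetilde{I_i}$ at that same vertex $i$. Consequently, if $0 \to M \to N^0 \to N^1$ with $N^0 = \bigoplus_i I_i^{a_i}$ and $N^1 = \bigoplus_i I_i^{b_i}$ is an injective copresentation over $\La$, then applying the functor gives an exact sequence $0 \to \widetilde{M} \to \widetilde{N^0} \to \widetilde{N^1}$ over $\Ltil$ with $\widetilde{N^0} = \bigoplus_i \widetilde{I_i}^{\,a_i}$ and $\widetilde{N^1} = \bigoplus_i \widetilde{I_i}^{\,b_i}$. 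Since $[\widetilde{N^0}] = \sum_i a_i e_i = [N^0]$ and likewise $[\widetilde{N^1}] = [N^1]$ (the vertex sets of $Q$ and of the quiver of $\Ltil$ being identified), the equality $\mathrm{ind}_{\La}(M) = \mathrm{ind}_{\Ltil}(\widetilde{M})$ follows \emph{provided} the induced copresentation is still minimal.

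The main obstacle is therefore minimality: a priori the functor could turn a minimal injective copresentation into a non-minimal one, or the minimal one over $\Ltil$ could have strictly smaller multiplicities. To handle this I would argue that minimality of the injective envelope is preserved. The injective envelope $M \hookrightarrow N^0$ over $\La$ is an essential extension; I would check that $\widetilde{M} \hookrightarrow \widetilde{N^0}$ remains essential over $\Ltil$ — using that restriction of scalars sends $\widetilde{N}$ to $N \oplus N$, that a $\Ltil$-submodule of $\widetilde{N^0}$ is in particular a $\La$-submodule, and that soc and essentiality can be detected after restriction since $\La \subseteq \Ltil$ share the same identity. Equivalently, one can compare socles: $\operatorname{soc}_{\Ltil}(\widetilde{M})$ and $\operatorname{soc}_{\La}(M)$ have the same composition factors counted by vertex (the socle of $\widetilde{M}$ as a $\Ltil$-module corresponds to the socle of $M$ as a $\La$-module, since the extra loops $\epsilon_i$ act as zero on simples), which pins down $N^0$. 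For the minimality of $\widetilde{N^0} \to \widetilde{N^1}$, i.e.\ that the cokernel map $\widetilde{N^0} \to N^1$ has no injective summand of its source splitting off, I would again pass to the restriction $\La$-module, where minimality over $\La$ is known, or alternatively invoke that the induction functor, being a left adjoint to an exact restriction functor that preserves and reflects injectives appropriately, preserves injective envelopes. Once minimality is secured, the multiplicity computation above closes the argument.
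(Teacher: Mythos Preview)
Your approach is essentially the same as the paper's: apply the induction functor to a minimal injective copresentation, use exactness and the fact that induction sends the indecomposable injective at vertex $i$ to the indecomposable injective at vertex $i$, and read off the multiplicities. The paper's proof in fact stops there and simply asserts that the induced sequence is again a \emph{minimal} injective presentation, without the additional justification you attempt.

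Your extra concern about minimality is legitimate given how the index is defined, but your sketched argument for preservation of essentiality is a bit heavy for what is needed. A cleaner way to close the gap is to observe that the quantity $[N^1]-[N^0]$ is independent of the choice of injective copresentation: any two differ by a common injective summand $I$ added to both terms, and $[N^1\oplus I]-[N^0\oplus I]=[N^1]-[N^0]$. Hence once you know the induced sequence is \emph{some} injective copresentation of $\widetilde{M}$ with the stated multiplicities, the equality of indices follows without ever checking minimality.
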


\begin{proof}
Let $M\in\textup{mod}\,{\La}$ and consider $0\to M \to I^0\to I^1$ the minimal injective presentation of $M$ in $\text{mod}\,\La$. Applying the induction functor to this exact sequence yields  
\[0\to M\otimes_{\La} \Ltil \to I^0\otimes_{\La}\Ltil \to I^1\otimes_{\La} \Ltil\] 
in $\text{mod}\,\Lambda$. This is a minimal injective presentation of  $\widetilde{M} = M\otimes_{\La}\Ltil$, since the induction functor is exact and it maps indecomposable injective $\La$-modules to indecomposable injective $\Ltil$-modules at the same vertex.  Thus, by definition of the index we conclude that $\mathrm{ind}_{\La} (M)=\mathrm{ind}_{\Ltil} (\widetilde{M})$.
\end{proof}

 The $\tau$-tilting theory introduced and studied in \cite{AIR} is a generalization of the classical tilting theory of Brenner and Butler. In the case of Jacobian algebras, $\tau$-tilting theory captures the structure of the associated cluster algebra. We review some of the relevant definitions below.

A module $M\in \text{mod}\, \La$ is called \emph{$\tau$-rigid} if $\text{Hom}_{\La}(M, \tau M)=0$. A pair of objects $M\oplus P[1]$, where $M, P\in \text{mod}\,\La$ and $P$ is projective, is called \emph{support $\tau$-tilting} if $M$ is $\tau$-rigid, $\text{Hom}_{\La}(P,M)=0$, and $\left| M\oplus P\right|$ equals the rank of $\La$. The term $P[1]$ called the shifted projective and $M$ is called a support $\tau$-tilting module. In general, the support $\tau$-tilting pair is uniquely determined by $M$.

Next, we observe that the two algebras $\La$ and $\Ltil$ have the same $\tau$-tilting structure.  First, we recall the main result of \cite{EJR}.

\begin{theorem}\cite[Theorem 1]{EJR}\label{thm:ejr}
For an ideal $I$ which is generated by central elements and contained in the Jacobson radical of $\La$, the $g$-vectors of indecomposable $\tau$-rigid (respectively support $\tau$-tilting) modules over $\La$ coincide with the ones for $\La / I$.
\end{theorem}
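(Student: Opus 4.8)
Since this is \cite[Theorem 1]{EJR}, it is only quoted here; but one would prove it as follows. \textbf{Reduction.} Because $I\subseteq J(\La)$ is generated by central elements, it is nilpotent, and, writing $I=(z_1,\dots,z_r)$ with each $z_i$ central, the projection $\La\to\La/I$ factors as $\La\to\La/(z_1)\to\La/(z_1,z_2)\to\cdots\to\La/I$, where at every stage the kernel is generated by the image of a central element of the radical. Filtering a principal central ideal $(z)$ by its powers $(z)\supseteq(z^2)\supseteq\cdots$ (the kernel at step $j$ being generated by the image of the central element $z^j$, with $z^{2j}\in(z^{j+1})$ once $j\ge 1$) reduces us further, by transitivity of the assertion ``same $g$-vectors'', to the case $I=(z)$ with $z\in Z(\La)\cap J(\La)$ and $z^2=0$.

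\textbf{Transport to silting complexes.} By Adachi--Iyama--Reiten, basic support $\tau$-tilting $\La$-modules correspond bijectively to basic $2$-term silting complexes in $K^{b}(\mathrm{proj}\,\La)$, the $g$-vector of a module being the class of the complex in $K_0(\mathrm{proj}\,\La)\cong\mathbb{Z}^{|Q_0|}$; the same holds over $\La/I$, and since $I\subseteq J(\La)$ the algebras $\La$ and $\La/I$ have the same vertices, so $K_0(\mathrm{proj}\,\La/I)\cong\mathbb{Z}^{|Q_0|}$ canonically. The reduction functor $\La/I\otimes_{\La}-\colon K^{b}(\mathrm{proj}\,\La)\to K^{b}(\mathrm{proj}\,\La/I)$ sends a $2$-term complex $P^{-1}\to P^0$ of projectives to $\overline{P^{-1}}\to\overline{P^0}$ and preserves $K_0$-classes; the plan is to show it restricts to a bijection on basic $2$-term silting complexes. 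Transporting such a bijection back through the Adachi--Iyama--Reiten correspondence gives the statement for support $\tau$-tilting pairs $M\oplus P[1]$, and since every indecomposable $\tau$-rigid module is a direct summand of some support $\tau$-tilting module (and conversely every non-shifted summand of one is indecomposable $\tau$-rigid), the sets of $g$-vectors of indecomposable $\tau$-rigid modules over $\La$ and over $\La/I$ agree as well.

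\textbf{The main obstacle.} The heart of the argument is checking that $\La/I\otimes_\La-$ really does restrict to a bijection on $2$-term silting complexes. Preservation, in both directions, of the $\mathrm{Hom}$-vanishing (``presilting'') condition is a delicate homological comparison: using that $z$ is central with $z^2=0$, one relates $\mathrm{Hom}_{K^{b}(\La)}(P^{\bullet},Q^{\bullet}[1])$ to $\mathrm{Hom}_{K^{b}(\La/I)}(\overline{P^{\bullet}},\overline{Q^{\bullet}}[1])$ through the long exact sequences attached to $0\to P^{\bullet}I\to P^{\bullet}\to\overline{P^{\bullet}}\to 0$, centrality of $z$ being exactly what makes $P^{\bullet}I$ a subcomplex and what forces the connecting maps to cancel in the right way. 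One must then also see that the generation condition distinguishing silting from presilting is preserved and that every $2$-term silting complex over $\La/I$ lifts to one over $\La$: the former holds because a basic $2$-term presilting complex is silting precisely when it has $|Q_0|$ indecomposable summands, a number the reduction leaves fixed, whereas the lifting is the genuinely technical point, and it is here that the hypotheses ``central'' and ``contained in the radical'' are used in earnest.
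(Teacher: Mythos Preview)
You correctly identify that the paper does not prove this statement: it is quoted verbatim from \cite{EJR} and used as a black box in the subsequent corollary. There is therefore no ``paper's own proof'' to compare your sketch against; your opening sentence already acknowledges this, and that is the appropriate response here.

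As an aside on the sketch you offer: the overall architecture (pass to $2$-term silting via Adachi--Iyama--Reiten, show the reduction functor $\La/I\otimes_\La-$ induces a bijection on basic $2$-term silting complexes, and read off $g$-vectors in $K_0$) is indeed the strategy of \cite{EJR}. Your reduction to a single square-zero central element is reasonable, and you correctly flag the lifting of silting complexes along $\La\to\La/I$ as the technical core. One point to be careful about: the actual argument in \cite{EJR} does not quite proceed by the long exact sequence manipulation you describe for the presilting condition, but rather establishes directly that reduction reflects and preserves the relevant $\mathrm{Hom}$-vanishing via an explicit analysis of maps of two-term complexes modulo homotopy. Your description of the obstacle is accurate in spirit, if not in detail.
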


 As an application of the above theorem we obtain the following. 

\begin{corollary}
There is  a bijection between support $\tau$-tilting $\La$-modules and support $\tau$-tilting $\Ltil$-modules given by the induction functor. 
\end{corollary}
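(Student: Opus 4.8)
The plan is to derive this directly from Theorem~\ref{thm:ejr} applied with $\La$ playing the role of ``$\La/I$'' and $\Ltil$ playing the role of ``$\La$''. First I would verify the hypotheses: write $\Ltil = \La \otimes_K K[\epsilon]/(\epsilon^2)$ and observe that $\La \cong \Ltil/(\epsilon)$, where here $\epsilon$ denotes the two-sided ideal generated by the central elements $\epsilon_1,\dots,\epsilon_n$ (the loops added at each vertex, as described in the paragraph before Lemma~\ref{lemma_bilinear_equal}). These $\epsilon_i$ are central in $\Ltil$ because $\epsilon_i\alpha = \alpha\epsilon_j$ for every arrow $\alpha\colon i\to j$, and the ideal they generate is nilpotent (since $\epsilon_i^2 = 0$ and there are no paths through infinitely many loops), hence contained in the Jacobson radical. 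So Theorem~\ref{thm:ejr}, applied to the ideal $I = (\epsilon_1,\dots,\epsilon_n) \subseteq \Ltil$, tells us that the $g$-vectors of indecomposable $\tau$-rigid (respectively support $\tau$-tilting) modules over $\Ltil$ coincide with those over $\Ltil/I \cong \La$.

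Next I would upgrade the equality of $g$-vector sets into an honest bijection realized by the induction functor. The key input is that a support $\tau$-tilting pair is determined (up to isomorphism) by its $g$-vector: this is part of the main theory of \cite{AIR}, and it holds for both $\La$ and $\Ltil$. Thus the equality of $g$-vector sets already yields a bijection between the support $\tau$-tilting pairs over $\La$ and those over $\Ltil$. It remains to check that this abstract bijection is the one induced by $-\otimes_{\La}\Ltil$. For this I would take a support $\tau$-tilting pair $M\oplus P[1]$ over $\La$ and show that $\widetilde M \oplus \widetilde P[1]$ is support $\tau$-tilting over $\Ltil$ with the same $g$-vector. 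The $g$-vector statement follows from Remark~\ref{rem: g vector - notation} together with Lemma~\ref{lem:index}: the index is preserved by induction, and the $g$-vector of a nonprojective module equals $\mathrm{ind}(\tau(-))$, while induction sends projectives to projectives at the same vertex, so $g$-vectors are preserved. For $\tau$-rigidity of $\widetilde M$, I would use that the restriction of $\widetilde M$ to $\La$ is $M\oplus M$, that the induction and restriction functors are adjoint (stated in the excerpt), and the standard compatibility of $\tau$ with this situation — i.e. $\mathrm{Hom}_{\Ltil}(\widetilde M,\tau_{\Ltil}\widetilde M)=0$ because the corresponding Hom-space over $\La$ after restriction vanishes by $\tau$-rigidity of $M$ over $\La$. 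The conditions $\mathrm{Hom}_{\Ltil}(\widetilde P,\widetilde M)=0$ and the count $|\widetilde M \oplus \widetilde P| = |M\oplus P| = \mathrm{rank}\,\Ltil$ follow similarly from adjunction and the fact that induction preserves the number of indecomposable summands (it is fully faithful on the relevant subcategory, or one checks this directly on strings).

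Finally I would check injectivity and surjectivity of the assignment $M\oplus P[1] \mapsto \widetilde M \oplus \widetilde P[1]$. Injectivity is immediate since $M$ is recovered from $\widetilde M$ by restriction (one gets $M\oplus M$, hence $M$ by Krull--Schmidt). Surjectivity follows from the $g$-vector count: every support $\tau$-tilting pair over $\Ltil$ has a $g$-vector matching some support $\tau$-tilting pair over $\La$ by Theorem~\ref{thm:ejr}, and since the pair is determined by its $g$-vector, it must be the induced one.

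\textbf{Main obstacle.} The main point that needs care is the interplay between $\tau$ over $\La$ and $\tau$ over $\Ltil$ under induction/restriction — i.e. making precise that $\tau$-rigidity is preserved by $-\otimes_\La\Ltil$. The cleanest route is probably to avoid computing $\tau_{\Ltil}$ explicitly and instead argue on $g$-vectors: a pair is support $\tau$-tilting iff its $g$-vector is a $g$-vector of such a pair and $|M\oplus P|$ has the right size, so Theorem~\ref{thm:ejr} plus Lemma~\ref{lem:index} and Remark~\ref{rem: g vector - notation} do essentially all the work, and the identification with the induction functor is then just the observation that induction realizes the $g$-vector-preserving assignment.
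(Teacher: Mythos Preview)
Your proposal is correct and follows essentially the same strategy as the paper: verify the hypotheses of Theorem~\ref{thm:ejr} for the ideal cutting out $\La$ from $\Ltil$, use it to match $g$-vectors, and identify the resulting bijection with the induction functor by checking that induction preserves $\tau$-rigidity and $g$-vectors.

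Two small points of comparison. First, a minor slip: the individual loops $\epsilon_i$ are \emph{not} central in $\Ltil$ (for an arrow $\alpha\colon i\to j$ with $i\neq j$ one has $\alpha\epsilon_i=0\neq\epsilon_i\alpha=\alpha\epsilon_j$). What is central is their sum $1\otimes\epsilon=\sum_i\epsilon_i$, and this is the single central generator the paper uses. The ideals coincide, so your application of Theorem~\ref{thm:ejr} is still valid; only the justification needs adjusting. Second, for the step you flag as the main obstacle (preservation of $\tau$-rigidity), the paper gives a direct argument that is cleaner than invoking adjunction or bypassing via $g$-vectors: since induction is exact and sends indecomposable projectives (resp.\ injectives) to indecomposable projectives (resp.\ injectives) at the same vertex, $\tau$ commutes with induction, so $\tau_{\Ltil}\widetilde M\cong\widetilde{\tau_{\La}M}$; then any $\Ltil$-map $\widetilde M\to\tau_{\Ltil}\widetilde M$ is, after restriction, a $\La$-map $M\oplus M\to\tau_{\La}M\oplus\tau_{\La}M$, which vanishes by $\tau$-rigidity of $M$, and hence was already zero.
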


\begin{proof}
First, we claim that the induction functor maps $\tau$-rigid $\La$-modules to $\tau$-rigid $\Ltil$-modules.  Suppose that $M\in \text{mod}\,\La$ is $\tau$-rigid, and let $f_{\Ltil} \colon M\otimes_{\La} \Ltil \to \tau_{\Ltil} (M\otimes_{\La}\Ltil)$ be some morphism in $\text{mod}\,\Ltil$. Note that since the induction functor is exact, maps projectives to projectives, and injectives to injectives, we see that $\tau$ commutes with induction and so  $\tau_{\Ltil} (M\otimes_{\La}\Ltil)\cong (\tau_{\La} M)\otimes_{\La}\Ltil$.
Applying the restriction functor to $f_{\Ltil}$, we obtain $f_{\La}: M\oplus M \to \tau_{\La} M \oplus \tau_{\La} M$.  

Since $M$ is $\tau$-rigid, we conclude that $f_{\La}=0$.  Hence, $f_{\Ltil}=0$ since it is the same map as $f_{\La}$ on the level of vector spaces.  This shows the desired claim that the induced module $M\otimes_{\La}\Ltil$ is $\tau$-rigid.

This implies that the induction functor gives an inclusion of the support $\tau$-tilting modules from $\text{mod}\,\Lambda$ to $\text{mod}\,\Ltil$. Now, observe that $1\otimes \epsilon$ is a central element of $\Ltil$ and is contained in the Jacobson radical of $\La$.  Moreover, $\Ltil/\langle 1\otimes \epsilon \rangle \cong \La$.
By Theorem~\ref{thm:ejr} we conclude that the induction functor is also surjective onto $\tau$-rigid $\Ltil$-modules.  Note that by a similar reasoning as in the proof of Lemma~\ref{lem:index}, $M$ and $M\otimes_{\Lambda}\Ltil$ have the same $g$-vectors in $\text{mod}\,\Lambda$ and $\text{mod}\,\Ltil$ respectively. 
\end{proof}

Next, we construct a certain $\Lambda$-module $F(N)$ associated to a submodule $N$ of an induced module. Later $F(N)$ will correspond to the dimers depicted by single blue lines in a certain double dimer cover associated to $N$.

Let $N\in\text{mod}\,\Ltil$, then we define $N\epsilon$ to be the product of $N$ with the ideal of $\Ltil$ generated by $1\otimes \epsilon$.  In particular, $N\epsilon = \{n(1\otimes \epsilon)\mid n\in N\}$ is a submodule of $N$, and since $1\otimes \epsilon$ acts trivially on $N\epsilon$ then $N\epsilon$ is actually an $\Lambda$-module.  In particular, we have the following short exact sequence in $\text{mod}\,\Ltil$
\[0\to N\epsilon \to N \to N/N\epsilon\to 0.\]
Note that if $N$ is an induced module then the restriction $N_{\Lambda}\cong N\epsilon \oplus N/N\epsilon$, and moreover $N\epsilon\cong N/N\epsilon$.

Now, suppose that $N\in \text{mod}\,\Ltil$ is a submodule of an induced module $M\otimes_{\Lambda}\Ltil$ for some $M\in\text{mod}\,\Lambda$.

Observe that 
\[M\otimes_{\Lambda}\Ltil = M\otimes_{\Lambda} \Lambda\otimes_K K[\epsilon]/(\epsilon^2)\cong M\otimes_K K[\epsilon]/(\epsilon^2).\]

Then we can represent the elements of $M\otimes_{\Lambda}\Ltil$ by $\{m\otimes e\mid m\in M \text{ and } e\in K[\epsilon]/(\epsilon^2)\}$ where the action of $\Ltil$ is given as follows $(m\otimes e)\cdot (a\otimes e')=(ma\otimes ee')$.  Since $N$ is a submodule of $M\otimes_{\Lambda}\Ltil$, then a direct computation implies that there exist $N_1\leq N_2$ submodules of $M$ such that
\[N=\{n_1\otimes 1 + n_2\otimes \epsilon \mid n_1\in N_1, n_2\in N_2\}.\] 
Then we see that $N\epsilon = \{n_1\otimes \epsilon\mid n_1\in N_1\}$ which is isomorphic to $N_1$ as a $\Lambda$-module.  By the description of $N$ above we see that the induced module $N\epsilon \otimes_{\Lambda} \Ltil = \widetilde{N \epsilon}$ is a submodule of $N$, and moreover it is the largest induced submodule of $N$.  This allows us to define the desired quotient 
\[F(N)\colon = N/(\widetilde{N \epsilon}) \in \textup{mod}\,\Lambda.\]

Observe that if $N=N_{\Lambda}$ is already a $\Lambda$-module, then $N\epsilon=0$ and $F(N)=N$.  On the other hand, if $N$ is an induced module then $\widetilde{N \epsilon}=N$ and $F(N)=0$.

\begin{example}\label{example_FN}
Let $\Lambda$ be the path algebra $1\xrightarrow{\alpha}2$, then $\Ltil$ is given by the following quiver 
\begin{center}
 \begin{tikzcd}[arrow style=tikz,>=stealth,row sep=4em]
1 \arrow[out=60,in=120, distance=1.5em,loop,"\epsilon_1", swap]\arrow[rr,"\alpha"]
  && 2 \arrow[out=60,in=120, distance=1.5em,loop,"\epsilon_2", swap]
\end{tikzcd}   
\end{center}
with relations $\epsilon_1^2=\epsilon_2^2=0$ and $\epsilon_1\alpha=\alpha\epsilon_2$. Observe that the $\Ltil$-module  $M=\begin{smallmatrix}&1\\1&&2\\&2\end{smallmatrix}$ is induced from the $\Lambda$-module $\begin{smallmatrix}1\\2\end{smallmatrix}$.  Consider a submodule $N=\begin{smallmatrix}1&&2\\&2\end{smallmatrix}$ of $M$, and note that its restriction is $N_{\Lambda}=\begin{smallmatrix}1\\2\end{smallmatrix}\oplus 2$.  We observe that $N\epsilon=2$ and its induction $\widetilde{N \epsilon}=\begin{smallmatrix}2\\2\end{smallmatrix}$, so $F(N)=1$.
\end{example}

\subsection{Associating $\mu$-invariants to $N$.}
Let $T$ be a triangulation of a polygon, $\Lambda=A_T$ the corresponding Jacobian algebra and $\mathcal{C}_\mathcal{F}$ the corresponding Frobenius cluster category. We use $F(N)$ to associate a product of $\theta$'s to a submodule $N$ of an induced module.  

We recall that a path $M_1\to \dots \to M_t$ in the Auslander-Reiten quiver is sectional if $\tau M_{i+1}\not=M_{i-1}$ for all $i=2, \dots, t-1$. 
We say that three distinct objects $M_1,M_2,M_3\in \text{ind}\,\mathcal{C}_\mathcal{F}$ form a triangle if there exist sectional paths $\rho_i: M_i \to \dots \to M_{i+1}$ for all $i = 1,2,3$, 
where we consider indices modulo 3, such that the composition $\rho_i\rho_{i+1}$ is not sectional. 
Moreover, let $\Delta(M_1,M_2,M_3)$ denote the set of all objects that lie on one of the sectional paths $\rho_i$ for $i = 1,2,3$.  It is easy to see that three objects form a triangle if and only if  their corresponding diagonals form a triangle in the polygon in the geometric model. 
Thus, we also call $\Delta(M_1,M_2,M_3)$ a \emph{triangle}. 
Given a triangulation $T=\{\gamma_1, \gamma_2,\dots,\gamma_n\}$, let $\Delta(T)$ denote the set of all triangles determined by shifted projectives for $A_T$ and projective-injectives in $\mathcal{C}_{\mathcal{F}}$. Such triangles are of the form $\Delta(X,Y,Z)$, if $T$ has no internal triangles, among the three $X,Y,Z \in \mathrm{ind} \, \mathcal{C}_{\mathcal{F}}$, there are either one or two shifted projectives and, in correspondence, either two or one projective-injective objects.

With the notation above, we have the following statement. 

\begin{proposition}\label{prop:triangle}
Every indecomposable object $M_{\gamma}$ in $\mathcal{C}_\mathcal{F}$ that is not projective-injective belongs to exactly two triangles of $\Delta(T)$. If $\gamma$ belongs to $T$, these are the ones corresponding to the two triangles bounded by $\gamma$. Otherwise, they are the ones corresponding to the first and the last triangle crossed by $\gamma$ in the triangulated polygon.
\end{proposition}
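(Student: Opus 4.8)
The plan is to translate the statement entirely into the combinatorial model of diagonals in the $(n+3)$-gon, where everything becomes a transparent statement about which triangles of the triangulation a given diagonal meets. First I would recall the dictionary established in Subsection~\ref{sub-frobenuis-category} and \ref{subsection_modA}: indecomposable objects $M_\gamma$ of $\mathcal{C}_\mathcal{F}$ correspond to diagonals $\gamma$ (including boundary edges, which give the projective-injectives), shifted projectives $P_a[1]$ correspond to the diagonals $a \in T$, and by the earlier paragraph three objects $X,Y,Z$ form a triangle $\Delta(X,Y,Z)$ in the sense defined above \emph{if and only if} their diagonals bound a triangle in the polygon. Under this dictionary, $\Delta(T)$ is exactly the set of the $n+1$ triangles of the triangulated $(n+3)$-gon determined by $T$, each recorded together with its three boundary diagonals (two or one of which are arcs of $T$, i.e. shifted projectives, the rest being boundary edges, i.e. projective-injectives); the hypothesis that $T$ has no internal triangles guarantees each triangle of $T$ has at least one boundary edge, which is what forces the ``one or two shifted projectives'' alternative in the definition of $\Delta(T)$.

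Next I would argue the two cases of the proposition. Suppose $\gamma \in T$. Then $\gamma$, being an arc of the triangulation, is a common side of exactly two triangles of $T$ (this is the defining property of a triangulation of a disk: every interior arc bounds precisely two triangles). These two triangles are elements of $\Delta(T)$ and $M_\gamma = P_\gamma[1]$ appears as a shifted projective in each of them; conversely any triangle of $\Delta(T)$ having $\gamma$ as one of its sides must be one of these two. Hence $M_\gamma$ lies in exactly two triangles of $\Delta(T)$, namely the two triangles bounded by $\gamma$. Now suppose $\gamma \notin T$ and $\gamma$ is not a boundary edge. Give $\gamma$ an orientation, with endpoints $s$ and $t$. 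As $\gamma$ travels from $s$ to $t$ it passes through a sequence of triangles $\Delta^{(0)}, \Delta^{(1)}, \dots, \Delta^{(d)}$ of $T$ (the ones crossed by $\gamma$, in order), consecutive ones sharing the arc of $T$ that $\gamma$ crosses; these are exactly the tiles of the snake graph $\mathcal{G}_\gamma$. The first triangle $\Delta^{(0)}$ is the one containing the endpoint $s$, and it has $\gamma$ ``coming out of'' it along one side — more precisely $\gamma$ together with (part of) $\Delta^{(0)}$ cuts the polygon so that $\gamma$ has the same ``behaviour'' at $s$ as the arc completing $\Delta^{(0)}$ on the $s$-side. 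The key point is that in the geometric model $M_\gamma$ forms a triangle with $X,Y$ precisely when $\gamma, $ (diagonal of $X$), (diagonal of $Y$) bound a triangle of the polygon, and the only triangles of $T$ one of whose sides is a diagonal ``equivalent to'' $\gamma$ at one of its endpoints are exactly $\Delta^{(0)}$ and $\Delta^{(d)}$. I would make this precise by the standard observation that a triangle $\Delta$ of $T$ has $\gamma$ as one of its ``virtual sides'' in $\mathcal{C}_\mathcal{F}$ — i.e. $M_\gamma \in \Delta(X,Y,Z)$ with $\{X,Y,Z\}$ the objects of $\Delta$ — iff $\Delta$ is an \emph{end triangle} of the fan of triangles crossed by $\gamma$, and there are exactly two such, the first and the last.

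The step I expect to be the main obstacle is making rigorous the assertion ``$M_\gamma$ belongs to the triangle $\Delta(X,Y,Z)$ if and only if the diagonal $\gamma$ is the first or last triangle crossed'', i.e. pinning down exactly when a non-triangulation diagonal $\gamma$ plays the role of a side of a triangle $\Delta(X,Y,Z)\in\Delta(T)$ in the sense of sectional paths in the Auslander--Reiten quiver. One clean way to handle this: use the already-proven fact (from the earlier paragraph in the excerpt) that three objects form a triangle iff their diagonals bound a triangle in the polygon, and apply it with one of the three diagonals being $\gamma$ itself and the other two being sides of some triangle of $T$; the diagonals $\gamma$, $\delta_1$, $\delta_2$ bound a triangle of the polygon precisely when $\delta_1, \delta_2$ are the two sides of $\Delta^{(0)}$ (resp.\ $\Delta^{(d)}$) that are \emph{not} crossed by $\gamma$, and $\gamma$ separates these from the rest. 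So the proposition reduces to the elementary planar fact that a chord $\gamma$ of a polygon, drawn across a triangulation $T$, ``completes'' exactly two of the triangles of $T$ into a triangle of the polygon: the one containing its initial endpoint and the one containing its terminal endpoint. I would write out this planar fact carefully — it is where the case $\gamma \in T$ versus $\gamma \notin T$ genuinely diverges, and where the ``no internal triangles'' hypothesis is silently used to ensure $\Delta(T)$ really consists of all $n+1$ triangles of $T$ — and then the rest of the proof is bookkeeping through the dictionary.
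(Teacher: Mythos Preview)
Your overall strategy --- translate to the polygon model and argue combinatorially --- matches the paper's, and your treatment of the case $\gamma \in T$ is fine. But there is a genuine gap in the case $\gamma \notin T$, precisely at the step you flagged as the main obstacle. The issue is your proposed criterion: ``$M_\gamma \in \Delta(X,Y,Z)$ iff the diagonals $\gamma, \delta_1, \delta_2$ bound a triangle of the polygon for two of the three sides $\delta_1,\delta_2$''. This is false. Membership in $\Delta(X,Y,Z)$ means $M_\gamma$ lies on a sectional path between two of the corner objects, say $X$ and $Y$; in the geometric model a sectional path fixes one polygon vertex, so the diagonals of $X$, $M_\gamma$, $Y$ all share a common vertex $v$ --- they form a \emph{fan} at $v$, not a triangle. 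Concretely, if $\Delta^{(0)}$ has vertices $p,q,s$ with $\gamma$ starting at $s$ and crossing $(p,q)$, then the two uncrossed sides $(p,s),(q,s)$ and $\gamma$ all contain $s$; three diagonals through a common point cannot bound a triangle.

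The correct characterization, and the one the paper uses, is: $M_\gamma$ lies on the sectional path between the two corners of $\Delta$ sharing a vertex $v$ if and only if $\gamma$ has $v$ as an endpoint and the other endpoint of $\gamma$ lies (in the counterclockwise order) between the other endpoints of those two sides --- equivalently, $\gamma$ emanates from the vertex $v$ of $\Delta$ and crosses (or coincides with) the opposite side. For $\gamma \notin T$, the triangles of $T$ satisfying this are precisely the two containing an endpoint of $\gamma$, i.e.\ the first and last triangles crossed by $\gamma$. If you replace your ``bound a triangle'' criterion with this shared-vertex-and-ordering criterion, the rest of your argument goes through and in fact becomes the paper's proof.
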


\begin{proof}
Let $M\in \text{ind} \, \mathcal{C}_\mathcal{F}$ be a non projective-injective object.  Suppose that $M=M_\gamma$ corresponds to an arc $\gamma$ in the triangulated polygon. If $\gamma\in T$, then it belongs to exactly two triangles of $T$, which means that it belongs to exactly two triangles of $\Delta(T)$,  so the statement follows.  Now, suppose that $\gamma\not\in T$.
By definition, $M_\gamma$ lies on a sectional path $\rho: P_i[1]\to \dots \to P_j[1]$ if and only if the arcs $\gamma, \gamma_i, \gamma_j$ share a common vertex $x$  and one can pass from $\gamma_i$ to $\gamma$ and then to $\gamma_j$ in the polygon by moving the other endpoint of $\gamma_i$ counterclockwise around the boundary of the polygon without passing through $x$.  This implies that $M_\gamma$ belongs to exactly two triangles of $\Delta(T)$, which correspond to the first and last triangle that $\gamma$ passes through. 
\end{proof}

See Example \ref{eg_A_2_sectional} and Figure \ref{fig:AR_eg} for an example in type $A_2$.

We now give an ordering of the $\mu$-invariants from a representation theory point of view, that is using the triangles in $\Delta(T)$ and describe a way to associate $\mu$-invariants to each submodule $N$ of an induced module $\widetilde{M}$. Recall that each of these triangles $\Delta$ corresponds to a triangle in the triangulated polygon and hence to a $\mu$-invariant which we denote $\theta_\Delta$.

Assume now that $T$ is a fixed triangulation of a disk with $n+3$ marked points with no internal triangles, so that there exists a longest arc $\gamma=(s,t)$ crossing all the arcs of the triangulation. We fix an orientation of $\gamma$ and let $\gamma_1$ denote the first arc in $T$ crossed by $\gamma$, that is this arc corresponds to the shifted projective $P_1[1]$. Note that $T$ has exactly two \textit{ears}, that is triangles with two boundary edges, one of which contains the source $s$ of $\gamma$ and the other its target $t$, while all other triangles have exactly one boundary edge. Number the marked points $0,\,1,\dots, n+2$ in the counter-clockwise direction. Then the Auslander-Reiten quiver of $\mathcal{C}_{\mathcal{F}}$ is shown in Figure \ref{fig:ARquiver}.

Say that $s$ lies in the ear delimited by marked points $x,\, x+1, \, x+2$, where $s=x+1$  and sums are taken modulo $n+3$. This corresponds to the triangle $\Delta_1=\Delta(( x+1, x+2),(x+1, x),(x+2, x))$ in $\Delta(T)$:
\begin{align*}
    ( x+1, x+2)\rightarrow \dots \rightarrow (x+1, x)\rightarrow (x+2, x)\rightarrow ( x+1, x+2),
\end{align*}
where $( x+1, x+2),(x+1, x)$ are boundary edges and $(x+2, x)$ corresponds to the shifted projective $P_1[1]$. Note that the sectional path
\begin{align*}
    \rho_1: ( x+1, x+2)\rightarrow \dots \rightarrow (x+1, x)
\end{align*}
is longest, in the sense that it consist of a full ascending diagonal $d$ in the Auslander-Reiten quiver in Figure \ref{fig:ARquiver}. Excluding its endpoints, this diagonal $d$ contains $n$ objects, each of which belongs to $\Delta_1$ and exactly another triangle in $\Delta (T)$ by Proposition \ref{prop:triangle}. Label these triangles $\Delta_2,\, \dots,\, \Delta_{n+1}$ from the one containing $(x+1, x+3)$ to the one containing $(x+1, x-1)$, see Figure \ref{fig:delta_ordering}.

\begin{figure}
  \centering
    \begin{tikzpicture}[scale=1]


\draw (-1,-3)  node{$\scriptstyle (x+1,x+2)$};
\draw (-1.5,-3.5)  node[blue]{$\scriptstyle \Delta_1$};
\draw (0,-2)  node{$\scriptstyle (x+1,x+3)$};
\draw (-1,-1.7)  node[blue]{$\scriptstyle \Delta_2$};
\draw (1,-1)  node{$\scriptstyle (x+1,x+4)$};
\draw (0,-0.7)  node[blue]{$\scriptstyle \Delta_3$};
\draw (2,0)  node{$\udots$};
\draw (3,1)  node{$\udots$};
\draw (4,2)  node{$\scriptstyle (x+1,x-2)$};
\draw (3,2.3)  node[blue]{$\scriptstyle \Delta_{n}$};
\draw (5,3)  node{$\scriptstyle (x+1,x-1)$};
\draw (4,3.3)  node[blue]{$\scriptstyle \Delta_{n+1}$};
\draw (6,4)  node{$\scriptstyle (x+1,x)$};

\draw [->] (-0.8,-2.8)--(-0.2,-2.2);
\draw [->] (0.2,-1.8)--(0.8,-1.2);
\draw [->] (1.2,-0.8)--(1.8,-0.2);
\draw [dashed] (2.2,0.2)--(2.8,0.8);
\draw [->] (3.2,1.2)--(3.8,1.8);
\draw [->] (4.2,2.2)--(4.8,2.8);
\draw [->] (5.2,3.2)--(5.8,3.8);

  \end{tikzpicture}
 \caption{The diagonal $d$, belonging to the triangle $\Delta_1$. Excluding its first and last element, the rest of the elements in $d$ belong to a second triangle in $\Delta(T)$, indicated in blue next to it.}
    \label{fig:delta_ordering}
\end{figure}
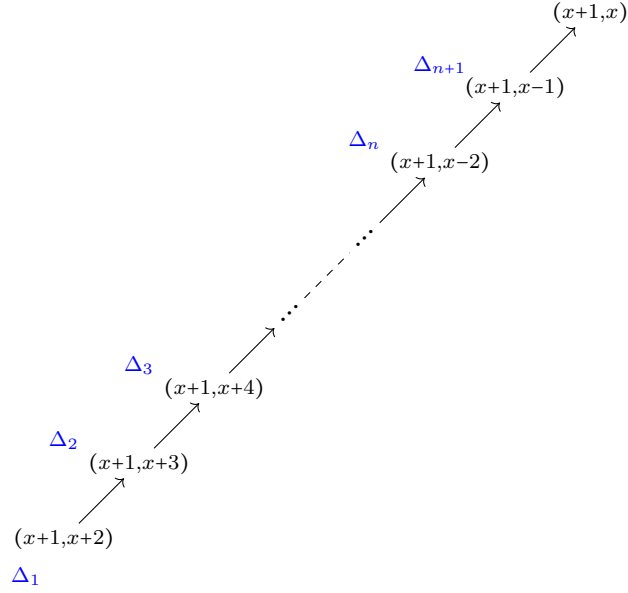

\begin{definition}
Following the above construction, and letting $\theta_i$ denote the $\mu$-invariant corresponding to triangle $\Delta_i$, we define the \textit{$\Delta$-positive ordering of the $\mu$-invariants} to be
\begin{itemize}
    \item $\theta_1> \theta_2 >\dots >\theta_n>\theta_{n+1}$ if the indecomposable projective $P_1$ in $\text{mod}\,A_T$ is simple,
    \item $\theta_2 >\dots >\theta_n>\theta_{n+1}>\theta_1$ if $P_1$ is not simple.
\end{itemize}
\end{definition}

Thanks to Proposition~\ref{prop:triangle} we can make the following definition.

\begin{definition}\label{defn_mu_module}
Let $M$ be an indecomposable module in $\text{mod}\,A_T$, then  $M\in \Delta\cap \Delta'$ for some triangles $\Delta, \Delta' \in \Delta(T)$.  Define $\mu(M)=\theta_{\Delta}\theta_{\Delta'}$ where we order the $\theta$'s according to the $\Delta$-positive ordering of the $\mu$-invariants.
Moreover, we extend this definition to arbitrary modules as follows. If $M = \bigoplus_i M_i$ is a finite direct sum of indecomposable $A_T$-modules $M_i$, then we define $\mu(M)=\Pi_i \mu(M_i)$. 
\end{definition}

\begin{remark}\label{remark_summands_ordering}
Note that even though the $\mu$-invariants anti-commute, the definition of $\mu(M)$ is independent of the ordering of the summands $M_i$.  Indeed, since $\mu(M_i)$ is a product of two $\theta$'s we see that $\mu(M_i)\mu(M_j)=\mu(M_j)\mu(M_i)$.
\end{remark}

We associate a $\mu$-invariant to any submodule $N$ of an induced module by passing to $F(N)$. 

\begin{definition}
Let $\Lambda=A_T$ and $\Ltil$ be the tensor algebra of $\Lambda$ with the dual numbers.  If $N\in \text{mod}\,\Ltil$ is a submodule of an induced module, then define $\mu(N)=\mu(F(N))=\mu(N/\widetilde{N\epsilon})$.
Recall that if $N$ is an induced module, then $F(N)=0$ and so $\mu(F(N))=1.$
\end{definition}

\subsection{Comparing the two orderings.}
We conclude this section by showing that the $\Delta$-positive ordering of the $\mu$-invariants coincides with the positive ordering by Musiker, Ovenhouse and Zhang we recalled in Definition~\ref{defn:positive_ordering}. First, we give an alternative description of this.

\begin{remark}\label{remark_positive_ordering_thetas}
Like in the above construction, say that the source of $\gamma$ lies in the ear delimited by marked points $x,\, x+1, \, x+2$, where $x+1$ is the source of $\gamma$ and $\theta_1$ is the $\mu$-invariant corresponding to this ear. The other ear of the triangulation is then delimited by the marked points $x+j,\, x+j+1, \, x+j+2$ for some $2\leq j\leq n+3$, where sums are taken modulo $n+3$. See Figure \ref{fig:ordering_circle} for an illustration of the following.
\begin{itemize}
    \item For $1\leq i\leq j$, let $\theta_i'$ denote the $\mu$-invariant corresponding to the triangle containing the boundary edge $(x+i, x+i+1)$.
    \item  For $j+1\leq i\leq n+1$, let $\theta_i'$ denote the $\mu$-invariant corresponding to the triangle containing the boundary edge $(x+i+1, x+i+2)$.
\end{itemize}
The positive ordering of $T$ from Definition \ref{defn:positive_ordering} is then:
\begin{itemize}
    \item $\theta_1> \theta_2' >\dots >\theta_n'>\theta_{n+1}'$ if $\theta_2'$ corresponds to the second triangle bounded by $(x, x+2)$,
    \item $\theta_2' >\dots >\theta_n'>\theta_{n+1}'>\theta_1$ otherwise.
\end{itemize}
\end{remark}

     \begin{figure}
  \centering
    \begin{subfigure}
    \centering
    \begin{tikzpicture}[scale=2]
      \draw (0,0) circle (1cm); 
      
     \draw (330:0.97cm) -- (330:1.03cm);
     \draw (300:0.97cm) -- (300:1.03cm);
     \draw (260:0.97cm) -- (260:1.03cm);
     \draw (220:0.97cm) -- (220:1.03cm);
     \draw (160:0.97cm) -- (160:1.03cm);
     \draw (130:0.97cm) -- (130:1.03cm);
     \draw (90:0.97cm) -- (90:1.03cm);
     \draw (50:0.97cm) -- (50:1.03cm);
     \draw (20:0.97cm) -- (20:1.03cm);

     \draw (20:1.25cm) node{$\scriptstyle x+n+2$};
      \draw (50:1.18cm) node{$\scriptstyle x$};
      \draw (90:1.18cm) node{$\scriptstyle x+1$};
      \draw (130:1.18cm) node{$\scriptstyle x+2$};
      \draw (160:1.18cm) node{$\scriptstyle x+3$};
      \draw (220:1.18cm) node{$\scriptstyle x+j$};
      \draw (260:1.18cm) node{$\scriptstyle x+j+1$};
      \draw (300:1.18cm) node{$\scriptstyle x+j+2$};
      \draw (330:1.22cm) node{$\scriptstyle x+j+3$};
      
      \draw (90:0.85cm) node[blue]{$\scriptstyle \theta_1$};
      \draw (145:0.9cm) node[blue]{$\scriptstyle \theta_2'$};
      \draw (260:0.9cm) node[blue]{$\scriptstyle \theta_j'$};
      \draw (315:0.85cm) node[blue]{$\scriptstyle \theta_{j+1}'$};
      \draw (40:0.85cm) node[blue]{$\scriptstyle \theta_{n+1}'$};
      \draw (50:1cm) -- (130:1cm);
      \draw (220:1cm) -- (300:1cm);
      \draw (50:1cm) -- (160:1cm);
      \draw[very thick, dashed, white] ([shift=(170:1cm)]0,0) arc (170:210:1cm);
        \draw[very thick, dashed, white] ([shift=(340:1cm)]0,0) arc (340:370:1cm);
    \draw (-90:1.5cm) node{$\scriptstyle \theta_1>\theta_2'>\dots>\theta_j'>\theta_{j+1}'>\dots >\theta_{n+1}'$};
    \end{tikzpicture}
    \end{subfigure}
    \hspace{3em}
    \begin{subfigure}
    \centering
    \begin{tikzpicture}[scale=2]
      \draw (0,0) circle (1cm); 
      
     \draw (330:0.97cm) -- (330:1.03cm);
     \draw (300:0.97cm) -- (300:1.03cm);
     \draw (260:0.97cm) -- (260:1.03cm);
     \draw (220:0.97cm) -- (220:1.03cm);
     \draw (160:0.97cm) -- (160:1.03cm);
     \draw (130:0.97cm) -- (130:1.03cm);
     \draw (90:0.97cm) -- (90:1.03cm);
     \draw (50:0.97cm) -- (50:1.03cm);
     \draw (20:0.97cm) -- (20:1.03cm);

     \draw (20:1.25cm) node{$\scriptstyle x+n+2$};
      \draw (50:1.18cm) node{$\scriptstyle x$};
      \draw (90:1.18cm) node{$\scriptstyle x+1$};
      \draw (130:1.18cm) node{$\scriptstyle x+2$};
      \draw (160:1.18cm) node{$\scriptstyle x+3$};
      \draw (220:1.18cm) node{$\scriptstyle x+j$};
      \draw (260:1.18cm) node{$\scriptstyle x+j+1$};
      \draw (300:1.18cm) node{$\scriptstyle x+j+2$};
      \draw (330:1.22cm) node{$\scriptstyle x+j+3$};
      
      \draw (90:0.85cm) node[blue]{$\scriptstyle \theta_1$};
      \draw (145:0.9cm) node[blue]{$\scriptstyle \theta_2'$};
      \draw (260:0.9cm) node[blue]{$\scriptstyle \theta_j'$};
      \draw (315:0.85cm) node[blue]{$\scriptstyle \theta_{j+1}'$};
      \draw (40:0.85cm) node[blue]{$\scriptstyle \theta_{n+1}'$};
      \draw (50:1cm) -- (130:1cm);
      \draw (220:1cm) -- (300:1cm);
      \draw (130:1cm) -- (20:1cm);
      \draw[very thick, dashed, white] ([shift=(170:1cm)]0,0) arc (170:210:1cm);
      \draw[very thick, dashed, white] ([shift=(340:1cm)]0,0) arc (340:370:1cm);
      \draw (-90:1.5cm) node{$\scriptstyle \theta_2'>\dots>\theta_j'>\theta_{j+1}'>\dots >\theta_{n+1}'>\theta_1$};
    \end{tikzpicture}
    \end{subfigure}
    \caption{The $\mu$-invariants associated to each triangle according to the notation from Remark \ref{remark_positive_ordering_thetas}. Note that $\theta_1$ and $\theta_j'$ correspond to the two ears, while the remaining $\mu$-invariants are indicated to the single boundary edge in the corresponding triangle. The two different cases are illustrated in the pictures.}
    \label{fig:ordering_circle}
\end{figure}
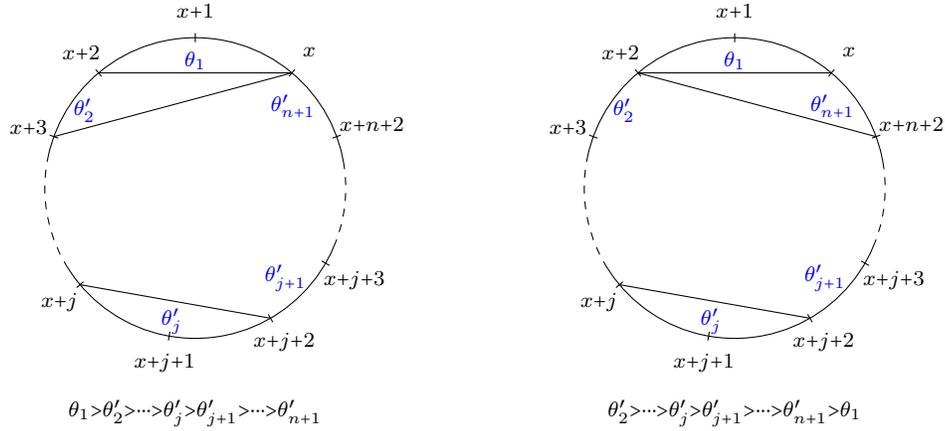

\begin{proposition}\label{prop_orderings_equal}
The $\Delta$-positive ordering coincides with the positive ordering.
\end{proposition}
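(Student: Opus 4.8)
The plan is to rewrite the positive ordering in the form of Remark~\ref{remark_positive_ordering_thetas}, so that both orderings become total orders on the $n+1$ triangles of $T$ of the \emph{same shape}: the triangles get indexed $1,\dots,n+1$, and the order is either $\theta_1>\theta_2>\dots>\theta_{n+1}$ or $\theta_2>\dots>\theta_{n+1}>\theta_1$ according to one binary condition. Hence it suffices to prove two things: (A) the two indexings of the triangles coincide, i.e. $\Delta_i$ is the triangle carrying $\theta_i'$ (notation of Remark~\ref{remark_positive_ordering_thetas}) for every $i$; and (B) the two dichotomy conditions coincide, i.e. ``$P_1$ is simple in $\mathrm{mod}\,A_T$'' holds exactly when ``$\theta_2'$ is the triangle bounded by $(x,x+2)$ other than $\Delta_1$''.

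I would dispatch (B) first, since it is purely local. By Section~\ref{subsection_modA}, as $\gamma_1=(x,x+2)$ is the shifted projective $P_1[1]$, the module $P_1$ is the indecomposable $A_T$-module corresponding to the diagonal $(x+1,x+3)$, so $\underline{\mathrm{dim}}\,P_1$ records the crossings of $(x+1,x+3)$ with $T$. A direct check shows that $(x+1,x+3)$ crosses an arc of $T$ exactly when that arc is incident to $x+2$ (the unique vertex strictly between $x+1$ and $x+3$); since it crosses $\gamma_1$, the module $P_1$ is simple if and only if $\gamma_1$ is the only internal arc of $T$ at $x+2$. As $T$ has no internal triangle, the triangle $T_1$ lying across $\gamma_1$ from $\Delta_1$ has a boundary edge at $x+2$; since $(x+1,x+2)$ already lies in $\Delta_1$, that boundary edge is $(x+2,x+3)$, so $\gamma_1$ is the only internal arc at $x+2$ precisely when $T_1=\{x,x+2,x+3\}$. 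Finally $T_1=\{x,x+2,x+3\}$ says exactly that $\theta_2'$ (the unique triangle with boundary edge $(x+2,x+3)$) is the triangle bounded by $\gamma_1$ other than $\Delta_1$, which is the condition in Remark~\ref{remark_positive_ordering_thetas}. This proves (B).

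For (A), the case $i=1$ is clear, as both $\Delta_1$ and the triangle of $\theta_1$ are the ear containing $s=x+1$. For $2\le i\le n+1$, the $(i-1)$st interior object of the diagonal $d$ in the Auslander--Reiten quiver is $\delta_i:=(x+1,x+i+1)$, which is not in $T$ (it crosses $\gamma_1$); since $s=x+1$ lies in the unique triangle $\Delta_1$, the first triangle crossed by $\delta_i$ is $\Delta_1$, so by Proposition~\ref{prop:triangle} the triangle $\Delta_i$ is the \emph{last} triangle of $T$ crossed by $\delta_i$. Thus (A) reduces to the geometric statement: the last triangle crossed by $(x+1,x+i+1)$ is the triangle of $T$ containing the boundary edge $(x+i,x+i+1)$ when $2\le i\le j$, and the one containing $(x+i+1,x+i+2)$ when $j+1\le i\le n+1$ --- which is precisely $\theta_i'$. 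I would prove this by tracing $\delta_i$: it crosses $\Delta_1$, exits through the ``window'' $\gamma_1=(x,x+2)$, and then runs inside the complementary $(n+2)$-gon from a point of $(x,x+2)$ to the vertex $x+i+1$; its last triangle is therefore the triangle incident to $x+i+1$ facing that window. The content is to pin down which of the two extremal triangles at $x+i+1$ this is: for $i<j$ the arc $\delta_i$ lies on the side of the longest arc $\gamma=(x+1,x+j+1)$ containing $x+2$, hence reaches $x+i+1$ from that side and its last triangle is the one bounded by $(x+i,x+i+1)$; for $i>j$ the symmetric argument on the other side of $\gamma$ gives the triangle bounded by $(x+i+1,x+i+2)$; and for $i=j$ one has $\delta_j=\gamma$, whose last triangle is the second ear $\{x+j,x+j+1,x+j+2\}$, which contains \emph{both} boundary edges, so the two conventions agree at this one index. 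This can be organized either as an induction on $i$ --- passing from $\delta_i$ to $\delta_{i+1}$ rotates the far endpoint by one boundary step and, one checks, advances the ``last triangle'' in lock-step with the boundary-edge label --- or by quoting the standard description of a maximal straight path in the Auslander--Reiten quiver of the type-$A$ cluster category as a chain of triangles; degenerate configurations ($j$ small, $j=n+1$, or tiny $n$) are handled by inspection.

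The genuine obstacle is exactly this geometric identification in (A): matching, uniformly in $i$ and across the transition at $i=j$, the last triangle met by the fan of arcs $\{\delta_i\}$ out of $s$ with the triangles selected by boundary edges in Remark~\ref{remark_positive_ordering_thetas}. Everything else --- the reduction to (A) and (B), and the verification of (B) --- is formal.
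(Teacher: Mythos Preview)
Your proposal is correct and follows essentially the same two-step strategy as the paper: first pin down the position of $\theta_1$ (your (B)), then match the remaining labels $\theta_i=\theta_i'$ for $i\ge2$ (your (A)). The differences are in execution rather than in substance.

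For (B), the paper reads off whether $P_1$ is simple from the direction of the arrow $1\!-\!2$ in $Q_T$, which it identifies with whether the third vertex of the triangle across $\gamma_1$ is $x+3$ or $x-1$. You instead compute $\underline{\dim}\,P_1$ as the crossing vector of $(x+1,x+3)$ with $T$ and argue directly that this is $1$-dimensional iff $\gamma_1$ is the only internal arc at $x+2$. Both are one-line local checks and are equivalent.

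For (A), the paper works in Auslander--Reiten coordinates: it fixes the descending diagonal through $(x+1,x+i+1)$ and verifies, via the inequality $i<j+2\le k$ (resp.\ $l<j+1\le i$), that the segment of that diagonal belonging to the triangle $\theta_i'$ actually passes through $(x+1,x+i+1)$, so that triangle is $\Delta_i$. You instead invoke Proposition~\ref{prop:triangle} to translate ``$\Delta_i$'' into ``the last triangle crossed by $\delta_i=(x+1,x+i+1)$'' and then argue geometrically in the polygon. This is the same content: the descending diagonal through $(x+1,x+i+1)$ is precisely the sectional path recording all arcs sharing the endpoint $x+i+1$, and lying on the segment between $(x+k,x+i+1)$ and $(x+i,x+i+1)$ is exactly the statement that the last triangle at $x+i+1$ met by $\delta_i$ is $\{x+i,x+i+1,x+k\}=\theta_i'$. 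The paper's coordinate inequality is a clean replacement for your ``side of $\gamma$'' heuristic, and in fact gives the uniform verification across $i\le j$ and $i>j$ that you flag as the genuine obstacle; you could sharpen your sketch by adopting it.
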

\begin{proof}
Throughout this proof, we use the notation fixed above.
Consider first the triangle corresponding to $\theta_1$, that is the one delimited by the marked points $x,\, x+1,\,x+2$, where $x+1$ is the source of the longest arc $\gamma$. The third vertex of the other triangle in $T$ delimited by the arc $( x,x+2 )$ can either be $x+3$ or $x-1$. Note that the first case holds exactly when $\theta_2'$ corresponds to the second triangle bounded by $(x, x+2)$. By Remark \ref{remark_positive_ordering_thetas}, in the positive ordering we then have $\theta_1$ bigger than all other $\mu$-invariants in the first case, and $\theta_1$ smaller than all other $\mu$-invariants in the second case. In the quiver $Q_T$, constructed as described in Definition \ref{def:initial seed-cluster algebra}, these two cases correspond respectively to
\begin{center}
 \begin{tikzcd}[arrow style=tikz,>=stealth,row sep=4em]
\cdots \, \,  2 \arrow[r]
  & 1 & \text{ and }& \cdots \, \, 2 &1 \arrow[l],
\end{tikzcd}   
\end{center}
where $1$ corresponds to the arc $( x,x+2 )$ by construction and the rest of the quiver sits to the left of $2$. The first case corresponds to the projective $P_1$ at vertex $1$ being simple while the second to $P_1$ being not simple.

It is now enough to prove that $\theta_i=\theta_i'$ for $2\leq i\leq n+1$.
Consider first $2\leq i\leq j$ and recall that by construction the triangle corresponding to $\theta_i'$ contains the arc $( x+i, x+i+1)$ and its third vertex is $x+k$ for some $j+2\leq k\leq n+3$. The $i$th descending diagonal in the Auslander Reiten quiver crossing $d$ crosses $d$ at $( x+1, x+i+1 )$ and since $i<j+2\leq k$, the part of this diagonal between $( x+k, x+i+1)$ and $( x+i, x+i+1)$ contains $( x+1, x+i+1 )$, and belongs hence to $\Delta_i$, see Figure \ref{fig:proof_AR1}. Hence $\theta_i=\theta_i'$ for $2\leq i\leq j$.
\begin{figure}
  \centering
    \begin{tikzpicture}[scale=1]
\draw (-1,1)  node{$\ddots$};
\draw (0,0)  node{$\scriptstyle (x,x+2)$};

\draw (0,6)  node{$\scriptstyle (x+i+2,x+i+1)$};
\draw (1,5)  node{$\ddots$};
\draw (2,4)  node[red]{$\scriptstyle (x+k,x+i+1)$};
\draw (3,3)  node[red]{$\ddots$};
\draw (5,1)  node[red]{$\ddots$};
\draw (6,0)  node[red]{$\scriptstyle (x+i-1,x+i+1)$};
\draw (7,-1)  node[red]{$\scriptstyle (x+i,x+i+1)$};

\draw (1,-1)  node{$\scriptstyle (x+1,x+2)$};
\draw (2,0)  node{$\scriptstyle (x+1,x+3)$};
\draw (3,1)  node{$\udots$};
\draw (4,2)  node[red]{$\scriptstyle (x+1,x+i+1)$};
\draw (5,3)  node{$\udots$};

\draw [->](-0.8,0.8)--(-0.2,0.2);
\draw [->](0.2,-0.2)--(0.8,-0.8);

\draw [->](0.2,5.8)--(0.8,5.2);
\draw [->](1.2,4.8)--(1.8,4.2);
\draw [->, red](2.2,3.8)--(2.8,3.2);
\draw [->,red](3.2,2.8)--(3.8,2.2);
\draw [->,red](4.2,1.8)--(4.8,1.2);
\draw [->,red](5.2,0.8)--(5.8,0.2);
\draw [->,red](6.2,-0.2)--(6.8,-0.8);

\draw [->] (1.2,-0.8)--(1.8,-0.2);
\draw [->] (2.2,0.2)--(2.8,0.8);
\draw [->] (3.2,1.2)--(3.8,1.8);
\draw [->] (4.2,2.2)--(4.8,2.8);

  \end{tikzpicture}
 \caption{For $2\leq i\leq j$, the elements marked in red in the $i$th diagonal crossing $d$ belong to $\Delta_i$.}
    \label{fig:proof_AR1}
\end{figure}
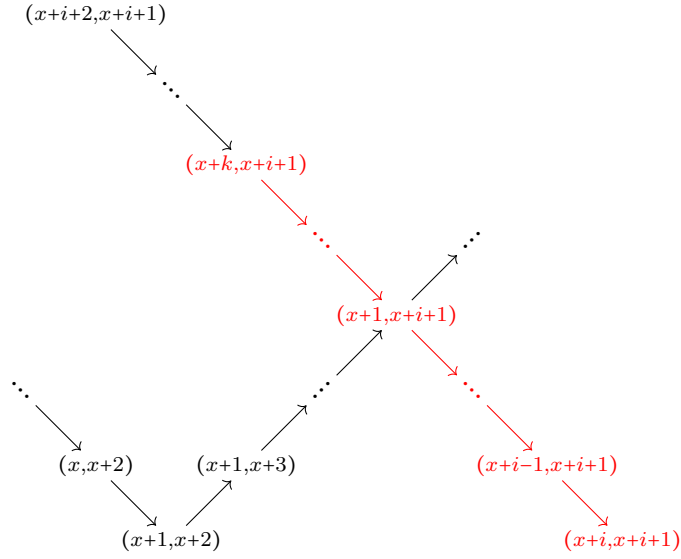
Let now $j+1\leq i\leq n+1$ and recall that by construction the triangle corresponding to $\theta_i'$ contains the arc $( x+i+1, x+i+2)$ and its third vertex is $x+l$ for some $2\leq l\leq j$. The $i$th descending diagonal crossing $d$ crosses $d$ at $( x+1, x+i+1 )$ and since $l<j+1\leq i $, the part of this diagonal between $( x+i+2, x+i+1)$ and $( x+l, x+i+1)$, contains $( x+1, x+i+1 )$, and belongs hence to $\Delta_i$. Hence $\theta_i=\theta_i'$ for all $i$.\end{proof}

\begin{example}\label{eg_A_2_sectional}
Consider the triangulation of the pentagon from Figure \ref{fig:triang_polygon_eg}. The Auslander-Reiten quiver of the corresponding $\mathcal{C}_\mathcal{F}$ is shown in Figure \ref{fig:AR_eg}, where the arc $(0,2)$ corresponds to $P_1$, $(0,3)$ to $I_1$, $(1,3)$  to $P_2[1]$, $(1,4)$  to $P_1[1]$ and $(2,4)$  to $P_2$. In this case, the source of the longest arc $\gamma$ lies in the ear delimited by the vertices $x=4, x+1=0, x+2=1$. Then the triangle $\Delta_1=\Delta((0,1), (0,4), (1,4))$ is highlighted in red in Figure \ref{fig:AR_eg} and the triangles $\Delta_2=\Delta((1,3), (1,4), (3,4))$ and $\Delta_3=\Delta((1,2), (1,3), (2,3))$ are highlighted in blue and green respectively. Then, noting that $P_1$ is not simple, we have that the $\Delta$-positive ordering is $\theta_3>\theta_2>\theta_1$, agreeing with the classic positive ordering from Definition \ref{defn:positive_ordering}.

Moreover, in Example \ref{example_FN}, we computed that $F(\begin{smallmatrix}
    1&&2\\&2
\end{smallmatrix})=I_1$. From above, we have that $I_1$, corresponding to $(0,3)$, belongs to $\Delta_1$ and $\Delta_2$. Hence, $\mu(\begin{smallmatrix}
    1&&2\\&2
\end{smallmatrix})=\mu(I_1)=\theta_2\theta_1$.
\end{example}
\begin{figure}
  \centering
    \begin{tikzpicture}[scale=1]
\fill[red!50!white] (-0.3,0) -- (2.7,3)--(3.3,3) --(0.3,0)--(-0.3,0);
\fill[red!50!white] (2.7,3) -- (3.3,3) --(4.3,2) --(3.7,2)--(2.7,3);
\fill[red!50!white] (3.7,2) -- (4.3,2) --(5.3,3) --(4.7,3)--(3.7,2);
\fill[red!50!white] (4.7,3) -- (5.3,3) --(8.3,0) --(7.7,0)--(4.7,3);
\fill[green!50!white] (0.7,1) -- (1.3,1) --(2.3,0) --(1.7,0)--(0.7,1);
\fill[green!50!white] (1.7,0) -- (2.3,0) --(3.3,1) --(2.7,1)--(1.7,0);
\fill[green!50!white] (2.7,1) -- (3.3,1) --(4.3,0) --(3.7,0)--(2.7,1);
\fill[green!50!white] (3.7,0) -- (4.3,0) --(6.3,2) --(5.7,2)--(3.7,0);
\fill[blue!50!white] (1.7,2) -- (2.3,2) --(3.3,1) --(2.7,1)--(1.7,2);
\fill[blue!50!white] (2.7,1) -- (3.3,1) --(4.3,2) --(3.7,2)--(2.7,1);
\fill[blue!50!white] (3.7,2) -- (4.3,2) --(6.3,0) --(5.7,0)--(3.7,2);
\fill[blue!50!white] (5.7,0) -- (6.3,0) --(7.3,1) --(6.7,1)--(5.7,0);

\draw (0,0)  node{$\scriptstyle (0,1)$};
\draw (2,0)  node{$\scriptstyle (1,2)$};
\draw (4,0)  node{$\scriptstyle (2,3)$};
\draw (6,0)  node{$\scriptstyle (3,4)$};
\draw (8,0)  node{$\scriptstyle (4,0)$};

\draw (1,1)  node{$\scriptstyle (0,2)$};
\draw (3,1)  node{$\scriptstyle (1,3)$};
\draw (5,1)  node{$\scriptstyle (2,4)$};
\draw (7,1)  node{$\scriptstyle (3,0)$};

\draw (2,2)  node{$\scriptstyle (0,3)$};
\draw (4,2)  node{$\scriptstyle (1,4)$};
\draw (6,2)  node{$\scriptstyle (2,0)$};

\draw (3,3)  node{$\scriptstyle (0,4)$};
\draw (5,3)  node{$\scriptstyle (1,0)$};

\draw [dashed] (1.5,1)--(2.5,1);
\draw [dashed] (3.5,1)--(4.5,1);
\draw [dashed] (5.5,1)--(6.5,1);

\draw [dashed] (2.5,2)--(3.5,2);
\draw [dashed] (4.5,2)--(5.5,2);

\draw [->] (0.2,0.2)--(0.8,0.8);
\draw [->] (1.2,1.2)--(1.8,1.8);
\draw [->] (2.2,2.2)--(2.8,2.8);
\draw [->] (2.2,0.2)--(2.8,0.8);
\draw [->] (3.2,1.2)--(3.8,1.8);
\draw [->] (4.2,2.2)--(4.8,2.8);
\draw [->] (4.2,0.2)--(4.8,0.8);
\draw [->] (5.2,1.2)--(5.8,1.8);
\draw [->] (6.2,0.2)--(6.8,0.8);

\draw [->](1.2,0.8)--(1.8,0.2);
\draw [->](2.2,1.8)--(2.8,1.2);
\draw [->](3.2,2.8)--(3.8,2.2);
\draw [->](3.2,0.8)--(3.8,0.2);
\draw [->](4.2,1.8)--(4.8,1.2);
\draw [->](5.2,0.8)--(5.8,0.2);
\draw [->](5.2,2.8)--(5.8,2.2);
\draw [->](6.2,1.8)--(6.8,1.2);
\draw [->](7.2,0.8)--(7.8,0.2);

\end{tikzpicture}
\caption{The Auslander-Reiten quiver of $\mathcal{C}_\mathcal{F}$ of type $A_2$.}
\label{fig:AR_eg}
\end{figure}
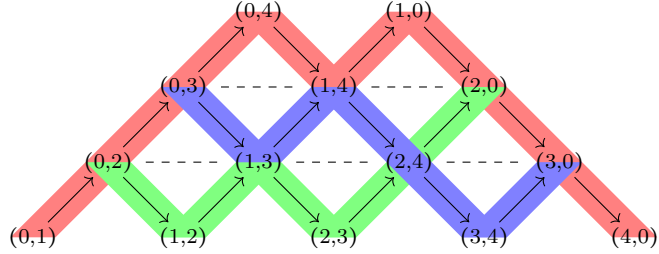


\section{Lattice bijections}\label{Sec:latticebij}
Throughout this section, we assume $\Lambda=KQ/I$, where $Q$ is a finite quiver and $I$ is an admissible ideal in $KQ$, and $M$ is a string module in $\module\Lambda$. We will denote by $\widetilde{M}=M\otimes_K K[\epsilon]/(\epsilon^k)$  the induced module in $\module \widetilde{\Lambda}= \module (\Lambda\otimes_K K[\epsilon]/(\epsilon^k))$, where $k\in\mathbb{N}$.

 Given a snake graph $\mathcal{G}=(G_1,G_2,\dots,G_d)$, recall that its minimal dimer cover is the one containing the West of $G_1$ and only boundary edges and, its minimal double dimer cover, similarly, is the one containing two copies of the West of $G_1$ and only boundary edges. The maximal dimer cover and the maximal double dimer cover are defined complementary to the minimal one. We will denote by $P_\textup{min}, P_\textup{max}, D_\textup{min}$ and $D_\textup{max}$ the minimal and maximal dimer covers (perfect matchings) and the minimal and maximal double dimer covers of $\mathcal{G}$, respectively.

\subsection{Dimer covers versus submodules of string modules}

In \cite{CS21}, a bijection between \emph{abstract snake graphs} and \emph{abstract strings} is introduced. An abstract snake graph is a snake graph with no face or edge weights and an abstract string is either the empty word or a finite word in the alphabet $\{\longrightarrow, \longleftarrow,\bullet \}$ that starts with a vertex, ends with a vertex and such that there is a vertex between any consecutive arrows. For a snake graph $\mathcal{G}=(G_1,G_2,\dots,G_d)$, an abstract string $w_\mathcal{G}$ (or simply $w$) is given by associating a vertex to each tile and direct and inverse arrows between vertices. More explicitly, the arrows in $w_\mathcal{G}$ are (uniquely) determined by the position of the second tile $G_2$, if this exists: if $G_2$ is to the right of $G_1$, we associate a direct arrow $ \bullet \longrightarrow \bullet$, otherwise we associate an inverse arrow $\bullet\longleftarrow \bullet$ and we iteratively derive direct and inverse arrows to adjacent tiles $G_i$ and $G_{i+1}$ depending on the form of the tiles $G_{i-1}, G_i, G_{i+1}$; if it is a zig-zag then the arrow induced by $G_i,G_{i+1}$ is same as the one induced by $G_{i-1},G_i$ and if it is straight, then it is opposite to the one induced by $G_{i-1},G_i$. For an abstract string $w$, an abstract snake graph $\mathcal{G}_w$ is given by associating a tile for each vertex in $w$ and glueing $G_2$ to the East edge of $G_1$ if the first arrow is direct and to the North edge if the first arrow is inverse and iteratively gluing tiles $G_{i+1}$ on the North or the East of $G_{i}$ depending whether the $i^\textup{th}$ arrow in $w$ agrees with the $i-1^\textup{st}$; if they agree we glue $G_{i+1}$ on the North or the East edge such that $G_{i-1},G_i,G_{i+1}$ is a zigzag and when the arrows disagree we consider straight pieces. 

\begin{example}
    For the abstract string $w=\bullet \longrightarrow \bullet \longleftarrow \bullet \longleftarrow \bullet$ the corresponsing astract snake graph is given by $G_w =$ \begin{tikzpicture}[scale=0.5]
  \draw (0,0) \rectanglepath;
  \draw (1,0) \rectanglepath;
    \draw (2,0) \rectanglepath;
      \draw (2,1) \rectanglepath;
  \end{tikzpicture}.
\end{example}

Building on the bijection between abstract snake graphs and abstract strings, in \cite{CS21}, an explicit bijection is introduced between the  dimer cover lattice $\mathcal{L}(\mathcal{G}_w)$ of $\mathcal{G}_w$ and the canonical submodule lattice $\mathcal{L}(M_w)$ of $M_w$, where $M_w$ is any string module whose underlinging string is $w$. In this correspondence, the maximal dimer cover $P_\textup{max}$ corresponds to the representation $M_w$ and the minimal  dimer cover $P_\textup{min}$ to the zero module. The adjacent vertices in the Hasse diagram of the submodule lattice $\mathcal{L}(M_w)$ are obtained by adding or removing a top from a submodule which in the Hasse diagram of the dimer cover lattice $\mathcal{L}(\mathcal{G}_w)$ corresponds to twisting a tile in a dimer cover. 

\subsection{Double dimer covers versus submodules of induced modules}
 
 The main result of this section establishes a lattice bijection between the 
double dimer covers of $\mathcal{G}_w$ and the canonical submodules of $\widetilde M_w \in \textup{mod}\, (\La \otimes_K K[\epsilon]/ \epsilon^2) $ where $\widetilde M_w$ is an induced module of a string module $M_w$ whose underlying string is $w$ and where $\mathcal{G}_w$ is the abstract snake graph associated to the abstract word $w$. Here we will focus on the case where $k=2$  but analogous constructions can be given when we consider $k$-tuple dimer covers (i.e. each vertex is incident with precisely $k$ edges) and induced $\La \otimes_K K[\epsilon]/\epsilon^k $-modules.

From now on, assume  $\widetilde M_w=M_w\otimes_K K[\epsilon]/\epsilon^2$ is an induced module in $\module\widetilde{\Lambda}$ where $M_w$ is a string module in $\module\Lambda$.
Analogous to the classic setting, we may associate a representation $\widetilde M_w$ over $\textup{mod }\widetilde\Lambda$ to the snake graph $\mathcal{G}_w$ such that the maximal double dimer cover $D_\textup{max}$ corresponds to $\widetilde M_w$ and the minimal double dimer cover $D_\textup{min}$ to the zero submodule, see Figure~\ref{fig:sg-indmod}. We will show that removing / adding a top to a submodule $N$ of $\widetilde M_w$ may be associated with twisting a tile in the double dimer cover $\mathcal{DD}(\mathcal{G}_w)$. In Figure~\ref{fig:SG_SM}, compare the Hasse diagram of the double dimer cover lattice of the snake graph $\mathcal{G}_w={\begin{tikzpicture}[scale=.5]
        \draw (0,0) \rectanglepath;
        \draw (1,0) \rectanglepath;
  \draw (0.5,0.5) node{\tiny $1$};
  \draw (1.5,0.5) node{\tiny $2$};
    \end{tikzpicture}}$ and the Hasse diagram of the submodule lattice of the induced module $\widetilde{M}_w=
    \begin{smallmatrix}
     &1&\\1&&2\\&2
    \end{smallmatrix}$.
   
   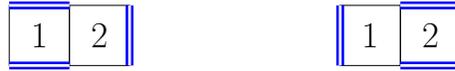
\begin{figure}[h!]\scalebox{0.8}{
    \centering
 \begin{tikzpicture}
 \begin{scope}
   \draw (0,0) \rectanglepath;
  \draw (1,0) \rectanglepath;
  \draw[very thick, double, double distance=1.3pt, blue] (2,1)--(2,0);
  \draw[very thick, double, double distance=1.3pt, blue] (1,1)--(0,1);
  \draw[very thick, double, double distance=1.3pt, blue] (0,0)--(1,0);
 \draw (0.5,0.5) node{\Large $1$};
  \draw (1.5,0.5) node{\Large $2$};
  \end{scope}
 \begin{scope}[xshift=5.5cm]
 \draw (0,0) \rectanglepath;
  \draw (1,0) \rectanglepath;
 \draw[very thick, double, double distance=1.3pt, blue] (0,0)--(0,1);
  \draw[very thick, double, double distance=1.3pt, blue] (1,0)--(2,0);
  \draw[very thick, double, double distance=1.3pt, blue] (1,1)--(2,1);
 \draw (0.5,0.5) node{\Large $1$};
  \draw (1.5,0.5) node{\Large $2$};
\end{scope}
\end{tikzpicture}
}
    \caption{The maximal (left) and minimal (right) double dimer covers and the corresponding indecomposable modules are $\widetilde{M} = \begin{smallmatrix}
    &1&\\1&&2\\&2
\end{smallmatrix}$ and the zero module in $\module\widetilde\Lambda$, respectively.}
     \label{fig:sg-indmod}
\end{figure}

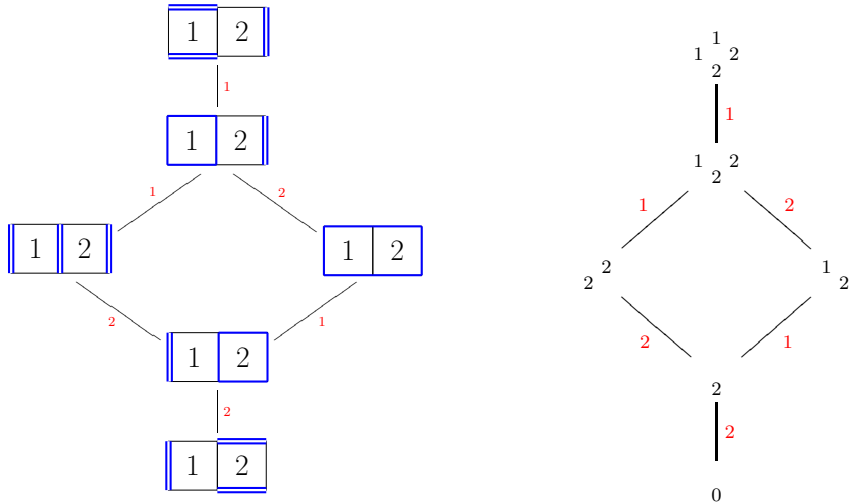
\begin{figure}[h!]
    \centering
\scalebox{0.65}{\xymatrix{
&{\begin{tikzpicture}
  \draw (0,0) \rectanglepath;
  \draw (1,0) \rectanglepath;
  \draw[very thick, double, double distance=1.3pt, blue] (2,1)--(2,0);
  \draw[very thick, double, double distance=1.3pt, blue] (1,1)--(0,1);
  \draw[very thick, double, double distance=1.3pt, blue] (0,0)--(1,0);
 \draw (0.5,0.5) node{\Large $1$};
  \draw (1.5,0.5) node{\Large $2$};
\end{tikzpicture}}\ar@[red]@{-}[d]^-{\color{red}1}
\\
&{\begin{tikzpicture}
  \draw (0,0) \rectanglepath;
  \draw (1,0) \rectanglepath;
  \draw[very thick, double, double distance=1.3pt, blue] (2,1)--(2,0);
  \draw[very thick, blue]  (0,1)--(1,1);
 \draw[very thick, blue]   (0,0)--(1,0);
  \draw[very thick, blue]  (1,0)--(1,1);
  \draw[very thick, blue]  (0,0)--(0,1);
 \draw (0.5,0.5) node{\Large $1$};
  \draw (1.5,0.5) node{\Large $2$};
\end{tikzpicture}}
\ar@[red]@{-}[ld]_-{\color{red}1}\ar@[red]@{-}[rd]^-{\color{red}2}
\\
{\begin{tikzpicture}
  \draw (0,0) \rectanglepath;
  \draw (1,0) \rectanglepath;
  \draw[very thick, double, double distance=1.3pt, blue] (2,1)--(2,0);
  \draw[very thick, double, double distance=1.3pt, blue] (0,0)--(0,1);
  \draw[very thick, double, double distance=1.3pt, blue] (1,0)--(1,1);
 \draw (0.5,0.5) node{\Large $1$};
  \draw (1.5,0.5) node{\Large $2$};
\end{tikzpicture}}
\ar@[red]@{-}[rd]_-{\color{red}2}
&&
{\begin{tikzpicture}
  \draw (0,0) \rectanglepath;
  \draw (1,0) \rectanglepath;
  \draw[very thick, blue]   (0,1)--(1,1);
 \draw[very thick, blue]  (0,0)--(1,0);
  \draw[very thick, blue]  (0,0)--(0,1);
    \draw[very thick, blue]  (1,0)--(2,0);
  \draw[very thick, blue]  (2,0)--(2,1);
    \draw[very thick, blue]  (2,1)--(1,1); 
 \draw (0.5,0.5) node{\Large $1$};
  \draw (1.5,0.5) node{\Large $2$};
\end{tikzpicture}}
\ar@[red]@{-}[ld]^-{\color{red}1}
\\
&{\begin{tikzpicture}
  \draw (0,0) \rectanglepath;
  \draw (1,0) \rectanglepath;
  \draw[very thick, double, double distance=1.3pt, blue] (0,0)--(0,1);
   \draw[very thick, blue]  (1,0)--(1,1);
    \draw[very thick, blue]  (1,0)--(2,0);
  \draw[very thick, blue]  (2,0)--(2,1);
    \draw[very thick, blue] (2,1)--(1,1);
 \draw (0.5,0.5) node{\Large $1$};
  \draw (1.5,0.5) node{\Large $2$};
\end{tikzpicture}}
\ar@[red]@{-}[d]^-{\color{red}2}
\\
&{\begin{tikzpicture}
  \draw (0,0) \rectanglepath;
  \draw (1,0) \rectanglepath;
 \draw[very thick, double, double distance=1.3pt, blue] (0,0)--(0,1);
  \draw[very thick, double, double distance=1.3pt, blue] (1,0)--(2,0);
  \draw[very thick, double, double distance=1.3pt, blue] (1,1)--(2,1);
 \draw (0.5,0.5) node{\Large $1$};
  \draw (1.5,0.5) node{\Large $2$};
\end{tikzpicture}}
}
}
\qquad \qquad 
\scalebox{0.9}{\xymatrix{
&{\begin{smallmatrix}
    &1&\\1&&2\\&2
\end{smallmatrix}}\ar@[red]@{-}[d]^-{\color{red}1}
\\
&{\begin{smallmatrix}
    &&\\1&&2\\&2
\end{smallmatrix}}
\ar@[red]@{-}[ld]_-{\color{red}1}\ar@[red]@{-}[rd]^-{\color{red}2}
\\
{\begin{smallmatrix}
    &&\\&&2\\&2
\end{smallmatrix}}
\ar@[red]@{-}[rd]_-{\color{red}2}
&&
{\begin{smallmatrix}
    &&\\1&&\\&2
\end{smallmatrix}}
\ar@[red]@{-}[ld]^-{\color{red}1}
\\
&{\begin{smallmatrix}
    &&\\&&\\&2
\end{smallmatrix}}
\ar@[red]@{-}[d]^-{\color{red}2}
\\
&{\begin{smallmatrix}
    &&\\&&\\&0
\end{smallmatrix}}
}
}
\caption{The Hasse diagrams for the lattice of double dimer covers of $\mathcal{G}_w$ and the submodule lattice of $\widetilde{M}_w$ corresponding to the word $w=1\to 2$.}\label{fig:SG_SM}
\end{figure}

\begin{definition}
\begin{enumerate}
    \item We define the \emph{symmetric difference} $D\ominus D'$  of  double dimer covers $D$ and $D'$ of a snake graph $\mathcal{G}$ to be the  multigraph consisting of the set of edges given by $(D\cup D') \backslash (D \cap D')$ where for any edge $e$ in $\mathcal{G}$, the union is given by the maximum multiplicities of $e$ in $D$ and $D'$ and the intersection as the minumum multiplicities of $e$.
     \item A graph is called a \emph{snake multigraph} if the graph obtained by replacing multiple edges with single edges is a snake graph. 
     \item The \emph{completion} of a sub-multigraph $\mathcal{G}'$ of a snake multigraph, denoted by $(\mathcal{G}')^c$, is the multigraph obtained by adding a single copy of all missing edges of a tile if at least one of its boundary edges is in $\mathcal{G}'$. 
     \item A \emph{snake sub-multigraph} is a sub-multigraph of a snake multigraph such that each of its connected components is itself a snake multigraph. 
\end{enumerate}
\end{definition}

 \begin{lemma}\label{lem:enclosedtiles} Given a snake graph $\mathcal{G}$ and a double dimer cover $D$, the symmetric difference  
    $D\ominus D_\textup{min}$ gives rise to enclosed tiles of $\mathcal{G}$ where $D\ominus D_\textup{min}$ may contain either double or single edges. Moreover, the completion $(D\ominus D_\textup{min})^c$ is a snake sub-multigraph of doubled $\mathcal{G}$, that is a graph obtained by superimposing two copies of $\mathcal{G}$.
 \end{lemma}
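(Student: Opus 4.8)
The plan is to reduce everything to the single-dimer picture by decomposing $D$. By Remark~\ref{remark_superimpose_single_dimers} we may write $D$ as a superposition $D = P \sqcup P'$ of two perfect matchings of $\mathcal{G}$, and of course $D_\textup{min} = P_\textup{min} \sqcup P_\textup{min}$. The first step is a short bookkeeping computation: writing $m_{(-)}(e)$ for the multiplicity of an edge $e$, we have $m_{D_\textup{min}}(e) \in \{0,2\}$ (equal to $2$ exactly when $e \in P_\textup{min}$) while $m_D(e) = [e\in P] + [e\in P']$, and distinguishing the cases $e \in P_\textup{min}$ and $e \notin P_\textup{min}$ gives
\[ m_{D \ominus D_\textup{min}}(e) \;=\; \bigl|\, m_D(e) - m_{D_\textup{min}}(e)\,\bigr| \;=\; m_{P\ominus P_\textup{min}}(e) + m_{P'\ominus P_\textup{min}}(e). \]
Hence, as multigraphs, $D \ominus D_\textup{min} = (P\ominus P_\textup{min}) \sqcup (P'\ominus P_\textup{min})$. (As a consistency check, each vertex then has degree $0$, $2$ or $4$ in $D\ominus D_\textup{min}$, since it has degree $2$ in both $D$ and $D_\textup{min}$, so $D \ominus D_\textup{min}$ is a union of closed curves.)

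Next I would invoke the single-dimer case. Applying Remark~\ref{remark_superimpose_single_dimers} to the double dimer cover $P \sqcup P_\textup{min}$, whose set of single edges is precisely $P\ominus P_\textup{min}$, the edge set $P \ominus P_\textup{min}$ is a disjoint union of cycles, each being the boundary of a connected run of consecutive tiles of $\mathcal{G}$ (the union of the minimal and maximal matchings of the sub-snake-graph on those tiles); the same holds for $P'\ominus P_\textup{min}$. Superimposing the two families, $D\ominus D_\textup{min}$ is a union of such cycles in which an edge occurs with multiplicity $2$ exactly when it lies on a cycle of the first family and on a cycle of the second, and with multiplicity $1$ otherwise. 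This is exactly the assertion that $D\ominus D_\textup{min}$ gives rise to enclosed tiles, possibly with double or single edges.

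For the completion, I would observe that each enclosing cycle above contains, besides the boundary edges of the tiles it encloses, the interior edges joining its run to the (at most two) neighbouring tiles of $\mathcal{G}$; consequently a tile of $\mathcal{G}$ meets $D\ominus D_\textup{min}$ precisely when it lies in one of the enclosed runs or is immediately adjacent to one. An enclosed run together with its neighbours is again a run of consecutive tiles, so the set $S$ of tiles meeting $D\ominus D_\textup{min}$ is a union of runs of consecutive tiles, and $(D\ominus D_\textup{min})^c$ is the subgraph of $\mathcal{G}$ consisting of all four edges of each tile of $S$, with the edges that were already double kept double. Its connected components are then the full sub-snake-graphs on the maximal runs contained in $S$ — each a genuine snake graph — and every multiplicity is at most $2$, so $(D\ominus D_\textup{min})^c$ is a snake sub-multigraph of the doubled $\mathcal{G}$, as claimed.

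I expect the one genuinely delicate point to be the assertion in the last paragraph that the tiles of $S$ organise into \emph{edge}-connected runs, equivalently that after completion two of the enclosing cycles can never meet only at a single vertex. Here I would use the planar structure of snake graphs: two tiles sharing a vertex but no edge are the extremes $G_i, G_{i+2}$ of a length-three run $G_i, G_{i+1}, G_{i+2}$ forming an ``$L$''. If the middle tile $G_{i+1}$ did not meet $D\ominus D_\textup{min}$, then $D$ would agree with $D_\textup{min}$ on all four edges of $G_{i+1}$; in particular the interior edges $e_{i,i+1}$ and $e_{i+1,i+2}$ would be absent from $D$, and at each vertex of $G_{i+1}$ the full $D$-degree $2$ would already be supplied by edges on which $D$ and $D_\textup{min}$ agree, which forces the adjacent edges of $G_i$ and $G_{i+2}$ to agree with $D_\textup{min}$ as well, and propagating this around $G_i$ and $G_{i+2}$ shows they too do not meet $D\ominus D_\textup{min}$. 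Thus the bad configuration ($G_i, G_{i+2} \in S$ but $G_{i+1}\notin S$) cannot occur, which is what is needed to finish.
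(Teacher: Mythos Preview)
Your approach via the decomposition $D = P \sqcup P'$ is genuinely different from the paper's induction on the number of tiles, and the multiplicity identity $m_{D\ominus D_\textup{min}} = m_{P\ominus P_\textup{min}} + m_{P'\ominus P_\textup{min}}$ is correct and makes the ``enclosed tiles'' assertion fall out cleanly from the single-dimer picture. This part is arguably nicer than the paper's case-by-case check.

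The completion paragraph has two problems. First, in the paper's definition of completion the phrase ``boundary edges'' refers to boundary edges of the ambient snake graph, not all four sides of the tile; so only tiles with a \emph{snake-graph} boundary edge in $D\ominus D_\textup{min}$ --- i.e.\ precisely the enclosed tiles --- get filled in. Your set $S$ of tiles \emph{meeting} $D\ominus D_\textup{min}$ is strictly larger: a tile $G_j$ adjacent to an enclosed run meets $D\ominus D_\textup{min}$ through the interior edge $e_{j-1,j}$ on the run's bounding cycle, yet has no boundary edge there and is not completed. So your description of $(D\ominus D_\textup{min})^c$ is not the right object. Second, the propagation argument breaks at the inner corner of the L: the two $G_{i+1}$-edges incident there are $e_{i,i+1}$ and $e_{i+1,i+2}$, both absent from $D_\textup{min}$, so agreement on $G_{i+1}$ contributes degree~$0$ at that vertex, and the remaining degree~$2$ is split between one edge of $G_i$ and one of $G_{i+2}$ with no constraint on either individually --- the ``forces the adjacent edges to agree'' step fails. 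The clean fix, still within your framework, is to note that an L at $G_i,G_{i+1},G_{i+2}$ is a zigzag, so the two string arrows point the same way; then for any single matching $Q$, a cycle of $Q\ominus P_\textup{min}$ ending at $G_i$ forces $e_{i,i+1}\in Q$, which at the inner corner prevents $Q$ from agreeing with $P_\textup{min}$ on the adjacent edge of $G_{i+2}$. Applied to $P$ and $P'$ this rules out having $G_i$ and $G_{i+2}$ enclosed while $G_{i+1}$ is not, which is exactly what is needed for the connected components of the (correct) completion to be snake multigraphs.
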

 
\begin{proof}
    Consider first the case when $\mathcal{G}$ consists of a single tile. Then $D$ is either the minimal double dimer cover, the maximal one, or it consists of one copy of each of the four edges. In the first case, $D\ominus D_\textup{min}$ is the empty set, in the second it consists of double copies of all edges and in the third case of single copies of all edges. That is, in all cases $D\ominus D_\textup{min}$ gives rise to enclosed tiles of $\mathcal{G}$.

    Suppose now that for any snake graph with at most $k-1$ tiles, $D\ominus D_\textup{min}$ gives rise to enclosed tiles of $\mathcal{G}$ containing either single or double edges. Consider a snake graph $\mathcal{G}$ with $k$ tiles, labelled in order $G_1,G_2,\dots,G_{k-1},G_k$ and let $\mathcal{G}'$ be the snake graph consisting of tiles $G_1,G_2,\dots,G_{k-1}$, that is obtained by removing the last tile from $\mathcal{G}$. Given a double dimer cover $D$ of $\mathcal{G}$, we define a double dimer cover $D'$ of $\mathcal{G}'$ that agrees with $D$ apart from possibly at the edge $e$ between $G_{k-1}$ and $G_k$ in $\mathcal{G}$ in the following way: 
    \begin{enumerate}
        \item if $D$ contains a single copy of the three edges of $G_k$ different from $e$ (and possibly also a single copy of $e$), then  $D'$ contains a single copy of $e$,
        \item If $D$ contains a single copy of $e$ and a double copy of the edge in $G_k$ parallel to $e$, then  $D'$ contains a single copy of $e$,
        \item if $D$ contains double copies of the edges in $G_k$ incident to $e$ or a double copy of $e$, then  $D'$ contains a double copy of $e$,
        \item if $D$ contains a double copy of the edge in $G_k$ parallel to $e$ and no other edge of $G_k$, then $D'$ contains no copy of $e$. 
    \end{enumerate}
    Letting $D'_{\textup{min}}$ be the minimal double dimer cover of $\mathcal{G}'$, by inductive hypothesis we know that $D'\ominus D'_\textup{min}$ gives rise to enclosed tiles of $\mathcal{G}'$. Moreover, note that $D'_{\textup{min}}$ agrees with the restriction of $D_{\textup{min}}$ to $\mathcal{G}'$ apart from possibly at edge $e$. In case (1), the single edges of $G_k$ contained in $D$ are also contained in $D\ominus D_\textup{min}$. Then if $D$ contains a single copy of $e$, the symmetric difference encloses tile $G_k$ on its own, otherwise it encloses tile $G_k$ together with the last enclosed tiles in $D'\ominus D'_\textup{min}$. In the remaining cases, $D$ has some double copies of edges in $G_k$. If the double copies of the boundary edges agree with $D_{\textup{min}}$, then it is easy to see $D\ominus D_\textup{min}$ gives rise to the same enclosed tiles as $D'\ominus D'_\textup{min}$. On the other hand, if they do not agree with $D_{\textup{min}}$, then $D\ominus D_\textup{min}$ contains double copies of the three boundary edges of $G_k$ and it is again easy to check that then either $D\ominus D_\textup{min}$ encloses tile $G_k$ on its own or together with the last enclosed tiles in $D'\ominus D'_\textup{min}$.

    This proves the claim. Then, each maximal set of enclosed tiles gives a connected component in $(D\ominus D_\textup{min})$. When completed, each of these components is a snake multigraph and so  $(D\ominus D_\textup{min})^c$ is a snake sub-multigraph of doubled $\mathcal{G}$.
\end{proof} 

We denote each connected component of $(D\ominus D_\textup{min})^c$ in Lemma~\ref{lem:enclosedtiles} by $\mathcal{H}_i$, note this is a snake multigraph,  and write $\bigcup \mathcal{H}_i$ for $(D\ominus D_\textup{min})^c$.

\subsubsection{Abstract loopy strings} 

Let $\mathcal{B}=  \{ \longrightarrow, \longleftarrow, \Loop, \bullet \}$ be a set of four letters where we refer to the letters as a direct arrow, an inverse arrow, a loop and a vertex, respectively. An abstract \emph{loopy string} is either the empty word denoted by $\emptyset$ or it is a finite word in the alphabet $\mathcal{B}$ which starts with a vertex, ends with a vertex and such that a loop is followed by a vertex (and considered as starting and ending at the same vertex) and such that there is a vertex between any consecutive arrows and there is at most one loop at each vertex.

For simplicity, we will often omit the vertices and simply write, for instance,  $\longrightarrow \ou{\Loop}{} \longrightarrow$ for the loopy string 
$\bullet\longrightarrow\ou{\Loop}{\bullet}\longrightarrow\bullet$.

\subsubsection{Snake multigraphs versus abstract loopy strings}

Let $w$ be an abstract loopy string. We construct a snake multigraph associated to $w$ as follows. If $w=\emptyset$, the corresponding snake multigraph is the empty graph. If $w\neq\emptyset$, we consider its underlying abstract string $\overline{w}$ obtained by removing all loops and  the abstract snake graph $\mathcal{G}_{\overline{w}}$ associated to $\overline{w}$.  The snake multigraph $\mathcal{G}_w$ associated to $w$ is obtained by considering the boundary edges with multiplicity two for all the tiles corresponding to the loops in $w$.

\begin{example}
    For the loopy string $\longrightarrow \ou{\Loop}{} \longrightarrow$, the corresponding abstract snake multigraph is given by $\begin{tikzpicture}[scale=.5]
        \draw (0,0) \rectanglepath;
        \draw (1,0) \rectanglepath;
        \draw (1,1) \rectanglepath;
  \draw[thick] (1,0)--(0,0)--(0,1)--(1,1)--(1,2)--(2,2)--(2,1);
  \draw[thick, double, double distance=1.3pt] (1,0)--(2,0);
  \draw[thick, double, double distance=1.3pt] (2,0)--(2,1);
    \end{tikzpicture}$.
\end{example}
    
We will consider snake multigraphs such that for each tile either all boundary edges are single or double and refer to them as \emph{good snake multigraphs}. For a good snake multigraph $\mathcal{G}$, we will associate an abstract loopy string by considering the abstract string corresponding to the induced snake graph of $\mathcal{G}$ obtained by replacing all double edges with a single edge and adding loops at the vertices corresponding to the tiles in $\mathcal{G}$ for which the boundary edges have multiplicity two.

\begin{theorem}
    With the notation above, there is a bijection between the set of abstract loopy  strings and the set of abstract good snake multigraphs.
\end{theorem}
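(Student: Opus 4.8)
The plan is to deduce this from the bijection between abstract snake graphs and abstract strings of \cite{CS21}, by observing that on both sides the extra structure is nothing more than a choice of a subset: of the tiles on the graph side, and of the vertices on the string side.

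First I would record two elementary decompositions. An abstract loopy string $w$ is determined by, and can be reconstructed from, the pair $(\overline{w}, L(w))$, where $\overline{w}$ is the abstract string obtained by deleting every loop from $w$ and $L(w)$ is the set of vertices of $\overline{w}$ carrying a loop in $w$; conversely, decorating any subset of the vertices of an abstract string with a loop yields an abstract loopy string, since deleting the loops recovers a legitimate abstract string and the remaining conditions (a vertex at each end, a vertex between consecutive arrows, at most one loop per vertex) are unaffected by the decoration. Thus $w \mapsto (\overline{w}, L(w))$ is a bijection from abstract loopy strings to pairs consisting of an abstract string together with a subset of its vertices, with $\emptyset$ corresponding to the empty string with the empty subset. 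Symmetrically, using that $\mathcal{G}$ is \emph{good}, an abstract good snake multigraph $\mathcal{G}$ is determined by the pair $(\underline{\mathcal{G}}, S(\mathcal{G}))$, where $\underline{\mathcal{G}}$ is the abstract snake graph obtained from $\mathcal{G}$ by replacing each double edge with a single edge and $S(\mathcal{G})$ is the set of tiles all of whose boundary edges have multiplicity two; conversely, doubling the boundary edges of any subset of tiles of an abstract snake graph produces an abstract good snake multigraph, since interior edges stay single and each tile then has uniformly single or uniformly double boundary edges. Hence $\mathcal{G} \mapsto (\underline{\mathcal{G}}, S(\mathcal{G}))$ is a bijection from abstract good snake multigraphs to pairs consisting of an abstract snake graph together with a subset of its tiles; here I would also note explicitly that the interior edges of a good snake multigraph are single, so that undoubling loses no information.

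Next I would invoke \cite{CS21}: the bijection $\mathcal{G}' \leftrightarrow w_{\mathcal{G}'}$ between abstract snake graphs and abstract strings sends the $i$-th tile $G'_i$ to the $i$-th vertex of $w_{\mathcal{G}'}$, so it restricts to a bijection between the tile set of $\mathcal{G}'$ and the vertex set of $w_{\mathcal{G}'}$. Consequently it induces a bijection between pairs $(\underline{\mathcal{G}}, S)$ and pairs $(\overline{w}, L)$ as above, carrying $S$ to the corresponding vertex set $L$. Composing the three bijections just described produces a bijection between abstract good snake multigraphs and abstract loopy strings. Finally I would check that this composite is exactly the pair of constructions described before the theorem, so that no separate verification of well-definedness is required: starting from a loopy string $w$, the composite forgets the loops to get $\overline{w}$, forms the abstract snake graph $\mathcal{G}_{\overline{w}}$ underlying $\mathcal{G}_w$, and doubles the boundary edges of precisely the tiles corresponding to the looped vertices of $w$, which is the definition of $\mathcal{G}_w$; conversely, starting from a good snake multigraph $\mathcal{G}$, the composite undoubles to get $\underline{\mathcal{G}}$, takes its associated abstract string, and adds a loop at each vertex corresponding to a doubled tile of $\mathcal{G}$, which is the loopy string associated to $\mathcal{G}$. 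Thus the two constructions are mutually inverse and the theorem follows.

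The only real point of care is the bookkeeping in the first paragraph: one must confirm that the ``decorate a subset of vertices by loops'' description faithfully matches the literal definition of an abstract loopy string — in particular that a loop is permitted at the first or last vertex, consistently with the requirements that the word starts and ends with a vertex and that a loop is followed by a vertex — and that ``good'' is exactly the hypothesis making undoubling and re-doubling mutually inverse without losing track of interior edges. Everything else is a formal consequence of the \cite{CS21} bijection together with its matching of tiles to vertices.
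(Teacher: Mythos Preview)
Your proposal is correct and follows essentially the same approach as the paper: the paper's proof is the single line ``This is straightforward since finite loopy strings give rise to good snake multigraphs by construction,'' and you have simply unpacked that construction carefully, factoring both sides through the \cite{CS21} bijection together with the obvious subset-of-vertices/subset-of-tiles decoration. Your treatment is considerably more detailed than the paper's, but there is no genuine difference in strategy.
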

\begin{proof}
    This is straightforward since finite loopy strings give rise to good snake multigraphs by construction. 
\end{proof}

We may also generalise this theorem to a labeled version where snake multigraphs are considered with face weights which induces weights on the vertices of the corresponding loopy string and vice versa. For instance, we may consider the labeled loopy string $1\longrightarrow\ou{\Loop}{2}\longrightarrow 3$ in correspondence with the labeled snake multigraph $\begin{tikzpicture}[scale=.5]
        \draw (0,0) \rectanglepath;
        \draw (1,0) \rectanglepath;
        \draw (1,1) \rectanglepath;
  \draw (0.5,0.5) node{\tiny $1$};
  \draw (1.5,0.5) node{\tiny $2$};
  \draw (1.5,1.5) node{\tiny $3$};
  \draw[thick] (1,0)--(0,0)--(0,1)--(1,1)--(1,2)--(2,2)--(2,1);
  \draw[thick, double, double distance=1.3pt] (1,0)--(2,0);
  \draw[thick, double, double distance=1.3pt] (2,0)--(2,1);
    \end{tikzpicture}$.

\begin{definition} 
    We call a good snake sub-multigraph $\mathcal{G}'$ of $\mathcal{G}$ \emph{optimal} if whenever two copies of the boundary edges of a tile $G_k$ appears in $\mathcal{G}'$ then 
    $\mathcal{G}'$ contains two copies of the boundary edges of all the tiles succeeding $G_k$ in a maximal zig-zag that induces a direct string and $\mathcal{G}'$ contains two copies of the boundary edges of all the tiles preceding $G_k$ in a maximal zig-zag that induces an inverse string.
\end{definition}

The previous definition can be rephrased as follows. If $G_1,\dots,G_k,G_{k+1}, \dots, G_j$ is a maximal zig-zag snake sub-multigraph that induces a direct abstract string, then the tiles $G_k,G_{k+1}, \dots, G_j$ all have two copies of the boundary edges in $\mathcal{G}'$ and if the tiles $G_1,\dots,G_k,$ $G_{k+1}, \dots, G_j$ form a maximal zig-zag sub-snake multigraph that induces an inverse abstract string, then the tiles $G_1,, \dots, G_{k-1}, G_k$ all have two copies of the boundary edges in $\mathcal{G}'$.

The following remark will be useful in the proof of Lemma \ref{loop-mazimal-zigzag}.
\begin{remark}\label{remark_headofarrow}
 Note that our convention for minimal dimer cover implies that the upper part (target) of each arrow in the corresponding abstract string is parallel to the edges belonging to the minimal dimer cover. For example

    \begin{center}
    \begin{tikzpicture}[scale=0.85]
  \draw (0,0) \rectanglepath;
  \draw (0.5,0.5)  node{$\bullet$};
  \draw (1.5,0.5)  node{$\bullet$};
  \draw (2.5,0.5)  node{$\bullet$};
  \draw (2.5,1.5)  node{$\bullet$};
  \draw (1,0) \rectanglepath;
    \draw (2,0) \rectanglepath;
      \draw (2,1) \rectanglepath;
      \draw [->](0.7,0.5)--(1.3,0.5);
      \draw [->](2.3,0.5)--(1.7,0.5);
      \draw [->](2.5,1.3)--(2.5,0.7);
 \draw[very thick, blue]  (0,0)--(0,1);
 \draw[very thick, blue]  (1,0)--(2,0);
   \draw[very thick, blue]  (1,1)--(2,1);
     \draw[very thick, blue]  (3,0)--(3,1);
       \draw[very thick, blue]  (2,2)--(3,2);
  \end{tikzpicture}
  \end{center}
  Moreover, the same convention and rule applies for double dimer covers.
\end{remark}

\begin{lemma}\label{loop-mazimal-zigzag}
    Let $\mathcal{G}$ be a snake graph and $D$ be a double dimer cover of $\mathcal{G}$.  
The completion $(D\ominus D_\textup{min})^c$ is an optimal  snake sub-multigraph.
\end{lemma}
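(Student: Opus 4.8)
First I would unwind exactly what needs to be shown. By Lemma~\ref{lem:enclosedtiles} we already know that $(D\ominus D_\textup{min})^c = \bigcup \mathcal{H}_i$ is a snake sub-multigraph of doubled $\mathcal{G}$, where each $\mathcal{H}_i$ is a snake multigraph consisting of a maximal run of tiles enclosed by the symmetric difference, together with their completing edges. So the only thing to verify is the \emph{optimality} condition: whenever a tile $G_k$ appears with both copies of its boundary edges in $(D\ominus D_\textup{min})^c$, then (i) every tile succeeding $G_k$ in a maximal zig-zag inducing a \emph{direct} abstract string also has doubled boundary edges, and (ii) every tile preceding $G_k$ in a maximal zig-zag inducing an \emph{inverse} abstract string also has doubled boundary edges. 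The plan is to reduce this to a purely local statement about which edges a double dimer cover can place at a given tile, using Remark~\ref{remark_headofarrow} to control the position of the doubled edges relative to $D_\textup{min}$.

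The key step is the following observation. Recall from Remark~\ref{remark_superimpose_single_dimers} that the single (non-double) edges of $D$ form cycles enclosing some tiles, and these enclosed tiles are exactly the ones where $(D\ominus D_\textup{min})^c$ differs from $D_\textup{min}$, i.e. where it has doubled \emph{non-boundary-of-}$D_\textup{min}$ edges, hence (after completion) where all boundary edges become doubled. So a tile $G_k$ has both boundary edges doubled in the completion precisely when $G_k$ lies inside one of these cycles. Now I claim that if $G_k$ lies inside a cycle, then so does the next tile $G_{k+1}$ whenever the tiles $\dots, G_{k-1}, G_k, G_{k+1}, \dots$ form a zig-zag inducing a direct string at the step $G_k \to G_{k+1}$. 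This is because the boundary of the enclosing cycle, being the union of the minimal and maximal dimer covers of the smaller snake graph of enclosed tiles, must "turn" at exactly the zig-zag tiles and "go straight through" at the straight tiles — so a direct-string continuation forces the cycle to keep enclosing, while a change to an inverse string is precisely a place where the cycle boundary can close up. The analogous statement with the roles of "succeeding/direct" and "preceding/inverse" swapped handles case (ii); this asymmetry is exactly the asymmetry built into the definition of $D_\textup{min}$ (it contains the \emph{West} edge of $G_1$), which is what Remark~\ref{remark_headofarrow} records: the head of each arrow is parallel to the $D_\textup{min}$ edges.

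I would carry this out by induction on the number of tiles of $\mathcal{G}$, paralleling the induction in the proof of Lemma~\ref{lem:enclosedtiles}: remove the last tile $G_k$, pass to $D'$ on $\mathcal{G}'$ via the four cases (1)--(4) of that proof, apply the inductive hypothesis to conclude $(D'\ominus D'_\textup{min})^c$ is optimal, and then check that re-attaching $G_k$ preserves optimality. The case analysis is short: in cases (1)--(2) the tile $G_k$ either is enclosed on its own or joins the last enclosed run, and in either situation one checks the zig-zag/straight condition is compatible with optimality; in case (3) the new tile is doubled and one must verify the maximal zig-zag condition reaching back into $\mathcal{G}'$, which is where the inductive optimality of $\mathcal{G}'$ and Remark~\ref{remark_headofarrow} are used; in case (4) the tile contributes nothing new. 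The main obstacle I anticipate is bookkeeping: correctly matching up "succeeding in a maximal zig-zag that induces a direct string" with the combinatorics of which of the four edges of $G_k$ the cover $D$ selects, and making sure the direction conventions (direct vs.\ inverse, West vs.\ East, which is $G_1$) are consistently applied — essentially all the content is in getting Remark~\ref{remark_headofarrow} to do its job cleanly at the join between $G_{k-1}$ and $G_k$, and the rest is routine verification.
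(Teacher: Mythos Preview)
Your proposal contains a genuine error in the ``key observation.'' You assert that a tile $G_k$ has both copies of its boundary edges in $(D\ominus D_\textup{min})^c$ precisely when $G_k$ is enclosed by one of the cycles of single edges of $D$. This is backward. A tile enclosed by a cycle of $D$ has all its boundary edges appearing with multiplicity \emph{one} in $D\ominus D_\textup{min}$ (since $D$ contributes a single edge and $D_\textup{min}$ contributes either $0$ or $2$, giving symmetric difference $1$ in both cases). The tiles with \emph{doubled} boundary edges in $(D\ominus D_\textup{min})^c$ are instead those on which $D$ locally agrees with $D_\textup{max}$, i.e.\ consists of double edges complementary to $D_\textup{min}$.

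Because of this reversal, your main propagation claim is false. Take $\mathcal{G}$ with two tiles $G_1,G_2$, $G_2$ to the East of $G_1$ (so $G_1\to G_2$ is a direct step). Let $D$ be the cycle around $G_1$ together with a doubled East edge of $G_2$. Then $G_1$ lies inside a cycle and $G_2$ does not, so ``$G_k$ in a cycle $\Rightarrow$ $G_{k+1}$ in a cycle along a direct step'' fails. Yet $(D\ominus D_\textup{min})^c$ is optimal here: $G_1$ has single boundary, $G_2$ has double boundary, and the only constraint optimality imposes is on tiles \emph{succeeding} a double tile in a direct zig-zag --- there are none after $G_2$.

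What actually needs to be shown (and what the paper proves) is the opposite propagation: if $G_k$ has \emph{double} boundary and $G_k\to G_{k+1}$ is a direct step, then $G_{k+1}$ also has double boundary. The paper argues by contradiction: at the first tile $G_{k+j}$ with single boundary following a double $G_{k+j-1}$, one uses Remark~\ref{remark_headofarrow} to locate the edges of $D_\textup{min}$ and then sees that $D$ is forced to place a single edge whose cycle cannot close without passing through a vertex already saturated by double edges of $D$. Your use of Remark~\ref{remark_headofarrow} is on the right track, but it must be applied to rule out a single tile \emph{after} a double one, not to propagate cycles forward.

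Finally, you do not address the preliminary claim that $(D\ominus D_\textup{min})^c$ is \emph{good} (each tile has either all single or all double boundary edges), without which the optimality condition is not even well-posed. Lemma~\ref{lem:enclosedtiles} only gives that the completion is a snake sub-multigraph, not that it is good; the paper proves goodness separately by an induction on the number of tiles of each connected component.
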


\begin{proof}
    We first prove that $(D\ominus D_\textup{min})^c$ is a good snake sub-multigraph. It is enough to consider one connected component $\mathcal{H}_i$ of $(D\ominus D_\textup{min})^c$. If $\mathcal{H}_i$ consists only of one tile, then $(D\ominus D_\textup{min})^c$ contains all of its edges either all single or all double, hence $\mathcal{H}_i$ is a good snake multigraph.
    Suppose now that $\mathcal{H}_i$ is a good snake multigraph whenever it has at most $k$ tiles and assume that $\mathcal{H}_i$ has $k+1$ tiles. Consider the $(k+1)$th tile $G$ of $\mathcal{H}_i$. 

Single edges covering vertices of $G$ in $D$ have to be in the tile $G$, otherwise we would get a contradiction to $\mathcal{H}_i$ ending at $G$. Hence if $D$ contains one single edge of $G$, then it has to contain the three single edges not contained in the previous tile of $\mathcal{H}_i$. Then, in the symmetric difference with $D_\textup{min}$, these three edges are single. On the other hand, if one (or two) of these three edges is double in $D$, then $D_\textup{min}$ has to contain a double copy of the other two (or other one) because $\mathcal{H}_i$ encloses $G$. By induction $\mathcal{H}_i$ is a good snake multigraph.

We now prove that $(D\ominus D_\textup{min})^c$ is an optimal  snake sub-multigraph. Again, it is enough to consider one connected component $\mathcal{H}_i=\{G_1,\dots, G_m\}$. Suppose there is a zig-zag $Z$ in $\mathcal{H}_i$ that induces a direct string and that there is a tile $G_k$ in $Z$ such that $\mathcal{H}_i$ contains two copies of the boundary edges of $G_k$. Assume for a contradiction that there exists a subsequent tile in $Z$ such that $\mathcal{H}_i$ contains only one copy of its boundary edges and that $G_{k+j}$ is the first such tile after $G_k$. Assume $G_{k+j}$ is North of the previous tile (the case when it is East works analogously). Then $\mathcal{H}_i$ contains at least the blue edges in the following drawing. Moreover, by Remark \ref{remark_headofarrow}, $D_\textup{min}$ contains the red double edges indicated in the drawing and hence $D$ must contain the green edges.

\begin{center}
$\begin{tikzpicture}[scale=.85]
\begin{scope}
        \draw (1,0) \rectanglepath;
        \draw (1,1) \rectanglepath;
  \draw (1.5,1.5) node{\tiny $k+j$};
  \draw (1,0)--(2,0);
  \draw (2,0)--(2,1);
  \draw[very thick, double, double distance=1.3pt, blue] (1,0)--(2,0);
  \draw[very thick, double, double distance=1.3pt, blue] (2,0)--(2,1);
  \draw[very thick, blue] (1,1)--(1,2);
  \end{scope}
  \begin{scope}[xshift=2.8cm, yshift=1cm]
      \draw (0,0) node {=};
   \end{scope}
  \begin{scope}[xshift=2.5cm]
        \draw (1,0) \rectanglepath;
        \draw (1,1) \rectanglepath;
  \draw (1.5,1.5) node{\tiny $k+j$};
  \draw (1,0)--(2,0);
  \draw (2,0)--(2,1);
  \draw[very thick, double, double distance=1.3pt, green] (2,0)--(2,1);
  \draw[very thick, green] (1,1)--(1,2);
  \end{scope}
  \begin{scope}[xshift=5.3cm, yshift=1cm]
      \draw (0,0) node {\Large $\ominus$};
   \end{scope}
    \begin{scope}[xshift=2.8cm, yshift=1cm]
      \draw (0,0) node {=};
   \end{scope}
  \begin{scope}[xshift=5.3cm]
        \draw (1,0) \rectanglepath;
        \draw (1,1) \rectanglepath;
  \draw (1.5,1.5) node{\tiny $k+j$};
  \draw (1,0)--(2,0);
  \draw (2,0)--(2,1);
  \draw [->](0.7,0.5)--(1.3,0.5);
  \draw [->](1.5,0.7)--(1.5,1.3);
  \draw[very thick, double, double distance=1.3pt, red] (1,0)--(2,0);
  \draw[very thick,double, double distance=1.3pt, red] (1,1)--(1,2);
  \end{scope}
    \end{tikzpicture}$
    \end{center}
    
 Note that since $D$ contains a single edge, this has to be part of a cycle. However there is no way to draw such a cycle without passing through the South-East vertex of $G_{k+j}$, which is already covered twice by $D$. Hence we have a contradiction.
Analogously, one can prove the corresponding statement when the zig-zag is induced by an inverse string.
\end{proof}

\begin{definition}\label{defn:face-function}
Let $\mathcal{G}$ be a snake multigraph with face weights $\{v_1,v_2,\dots, v_n\}\subset \mathbb{N}$. Define a function $h: \mathcal{G} \to \mathbb{N}^k$ given by $h(\mathcal{G})=(n_1,\dots, n_k)$ where $n_i=s_i+d_i$ is such that $s_i$ equals the number of tiles whose boundary edges are all single edges with face weight $v_i$ and $d_i$ equals twice the number of tiles with at least one double boundary edge with face weight $v_i$ in $\mathcal{G}$. We call $h$ the \emph{face function} of $\mathcal{G}$.
\end{definition}

\begin{example}
Consider the following snake multigraphs with face weights:

    \begin{center}
    \begin{tikzpicture}[scale=0.85]
    \begin{scope}
  \draw (-0.8,0.5) node{$\mathcal{G}_1:$};
  \draw (0,0) \rectanglepath;
  \draw (1,0) \rectanglepath;
  \draw (2,0) \rectanglepath;
  \draw[very thick, double, double distance=1.3pt] (0,0)--(1,0);
  \draw[very thick, double, double distance=1.3pt] (1,1)--(0,1);
  \draw[very thick, double, double distance=1.3pt] (0,0)--(0,1);
  \draw (0.5,0.5) node{\tiny $1$};
  \draw (1.5,0.5) node{\tiny $2$};
  \draw (2.5,0.5) node{\tiny $1$};
  \end{scope}
  \begin{scope}[xshift=5.5cm]
  \draw (-0.8,0.5) node{$\mathcal{G}_2:$};
  \draw (0,0) \rectanglepath;
  \draw (1,0) \rectanglepath;
  \draw (2,0) \rectanglepath;
  \draw[very thick, double, double distance=1.3pt] (0,0)--(1,0);
  \draw[very thick, double, double distance=1.3pt] (1,1)--(0,1);
  \draw[very thick, double, double distance=1.3pt] (0,0)--(0,1);
  \draw (0.5,0.5) node{\tiny $1$};
  \draw (1.5,0.5) node{\tiny $2$};
  \draw (2.5,0.5) node{\tiny $3$};
  \end{scope}
  \end{tikzpicture}
  \end{center}
  Then $h(\mathcal{G}_1)=(3,1)$ and $h(\mathcal{G}_2)=(2,1,1)$.
\end{example}

We will define a representation $N_w=N_{\mathcal{G}_w}$ for an \emph{optimal} loopy string $w$; i.e. a loopy string which gives rise to an optimal snake multigraph. The representation $N_w$ associated to an optimal snake multigraph $\mathcal{G}_w$ is obtained as follows. Each vertex followed by a loop is replaced by two copies of the field, other vertices with a single copy of the field and the action of an arrow on $N_w$ is the identity morphism if both the tail and head contain a single or double copy of the field, given by $\begin{bsmallmatrix} 0 \\ 1 \end{bsmallmatrix}$ if the arrow is from $K$ to $K^2$, by $\begin{bsmallmatrix} 1 & 0 \end{bsmallmatrix}$ if the arrow is from $K^2$ to $K$, and for the loops, by $\begin{bsmallmatrix} 0 & 0 \\  1 &  0\end{bsmallmatrix}$ when the vector space is $K^2$ and $0$ otherwise. 
If there are repeated labels, for each vertex with the same label take the direct sum of the vector spaces and the maps.  Note that by construction $h(\mathcal{G}_w)=\underline{\dim} (N_w)$.

\begin{example} The loopy string $ w=\ou{\Loop}1\longrightarrow\ou{\Loop}{2}\longleftarrow 1$ 
 gives rise to the optimal snake multigraph 
$ \begin{tikzpicture}[scale=0.85]
      \draw (-0.8,0.5) node{$\mathcal{G}_w:$};
  \draw (0,0) \rectanglepath;
  \draw (1,0) \rectanglepath;
  \draw (2,0) \rectanglepath;
  \draw[very thick, double, double distance=1.3pt] (0,0)--(2,0);
  \draw[very thick, double, double distance=1.3pt] (2,1)--(0,1);
  \draw[very thick, double, double distance=1.3pt] (0,0)--(0,1);
  \draw (0.5,0.5) node{\tiny $1$};
  \draw (1.5,0.5) node{\tiny $2$};
  \draw (2.5,0.5) node{\tiny $1$};
  \end{tikzpicture}$. 
Then, 
\begin{align*}
N_w=  \begin{tikzcd}[ampersand replacement=\&, arrow style=tikz,>=stealth,row sep=4em,column sep=4em]
K^3\arrow[r,shift right=0.6ex, "{\begin{bsmallmatrix}0&0&0\\0&0&1
\end{bsmallmatrix}}"']
\arrow[r, shift left=0.6ex, "{\begin{bsmallmatrix}1&0&0\\0&1&0
\end{bsmallmatrix}}"] \arrow[out=60,in=120,distance=1.5em,loop, "{\begin{bsmallmatrix}0&0&0\\1&0&0\\0&0&0
\end{bsmallmatrix}}"'] \&
\arrow[out=60,in=120,distance=1.5em,loop, "{\begin{bsmallmatrix}0&0\\1&0
\end{bsmallmatrix}}"'] K^2
\end{tikzcd}
\end{align*}
a (indecomposable) representation for  $\widetilde{\Lambda} = K (\ou{\Loop}{1} \rightrightarrows \ou{\Loop}{2} ) / I $.  
\end{example} 

\begin{remark}
    Note that it is possible to associate representations for good snake multigraphs as well but they are not necessarily  sub-representations of an indecomposable induced module. For instance, if we consider $\begin{tikzpicture}[baseline={(0, 0)},scale=0.5]
  \draw (-0.8,0.5) node{$\mathcal{G}:$};
  \draw (0,0) \rectanglepath;
  \draw (1,0) \rectanglepath;
  \draw (2,0) \rectanglepath;
  \draw[very thick, double, double distance=1.3pt] (0,0)--(1,0);
  \draw[very thick, double, double distance=1.3pt] (1,1)--(0,1);
  \draw[very thick, double, double distance=1.3pt] (0,0)--(0,1);
  \draw (0.5,0.5) node{\tiny $1$};
  \draw (1.5,0.5) node{\tiny $2$};
  \draw (2.5,0.5) node{\tiny $1$};
  \end{tikzpicture}$, 
  then the corresponding representation would have dimension vector $(3,1)$ and there is no indecomposable induced module in $\widetilde\Lambda = K (\ou{\Loop}1 \rightrightarrows \ou{\Loop} 2 )/I$ which has a sub-representation of this dimension vector.
  However, $\begin{smallmatrix}
      1&&1&\\
      &2&&1
  \end{smallmatrix}$ is a representation which is not a sub-representation of an induced module in $\module\Lambda$.
\end{remark}

 \begin{proposition}\label{prop:dimer--submod}
     With the notation above, let $D$ be a  double dimer cover
     of a snake graph $\mathcal{G}$.  Then the optimal  snake multigraph $(D \ominus D_{\textup{min}})^c$ gives rise to an embedding $N_D \hookrightarrow \widetilde M_\mathcal{G}$ such that $h((D\ominus D_\textup{min})^c)$ is the dimension vector of the submodule $N_D$ of $\widetilde M_\mathcal{G}$.
 \end{proposition}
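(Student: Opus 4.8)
The plan is to make $\widetilde{M}_\mathcal{G}$ completely explicit, to read off from $(D\ominus D_\textup{min})^c$ a nested pair of submodules of the string module $M_\mathcal{G}$, and then to write down the embedding slot by slot. First I would describe $\widetilde{M}_\mathcal{G}$ as a representation of $\widetilde{\Lambda}$: if $w$ is the abstract string of $M_\mathcal{G}$, then $\widetilde{M}_\mathcal{G}=M_\mathcal{G}\otimes_K K[\epsilon]/(\epsilon^2)$ is exactly the representation $N_{\widehat{w}}$ attached to the loopy string $\widehat{w}$ obtained from $w$ by inserting a loop at every vertex: at each quiver vertex the space splits as a $1$-part and an $\epsilon$-part, each a copy of the corresponding space of $M_\mathcal{G}$; every arrow of $Q$ acts diagonally with respect to this splitting; and every loop $\epsilon_i$ acts by the shift sending the $1$-part isomorphically onto the $\epsilon$-part and annihilating the $\epsilon$-part. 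In particular the $\epsilon$-part is precisely the common kernel of all loop actions, which is the feature that will dictate the construction below; recall also that $D_\textup{max}$ and $D_\textup{min}$ correspond to $\widetilde{M}_\mathcal{G}$ and to the zero submodule.

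By Lemma~\ref{lem:enclosedtiles} and Lemma~\ref{loop-mazimal-zigzag}, $(D\ominus D_\textup{min})^c$ is an optimal snake sub-multigraph of the doubled snake graph, so upon deleting multiplicities its set of tiles is a subset $\mathcal{T}$ of the tiles of $\mathcal{G}$; write $\mathcal{T}_d\subseteq\mathcal{T}$ for the tiles all of whose boundary edges occur with multiplicity two. Using Remark~\ref{remark_superimpose_single_dimers} together with the bijection of \cite{CS21} between dimer covers of $\mathcal{G}$ and canonical submodules of $M_\mathcal{G}$, the set $\mathcal{T}$ is the support of a submodule $N_2$ of $M_\mathcal{G}$, and the content of the optimality condition is precisely that $\mathcal{T}_d$ is closed under the arrows of $Q$: the requirement that a doubled tile force all tiles succeeding it along a direct maximal zig-zag and all tiles preceding it along an inverse maximal zig-zag to be doubled is, under the snake-graph/string dictionary, exactly arrow-closedness, so $\mathcal{T}_d$ is the support of a submodule $N_1\le N_2$. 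Carrying out this translation carefully, and matching it with the decomposition of $D$ as a superposition $P\cup P'$ from Remark~\ref{remark_superimpose_single_dimers}, is the step I expect to be the main obstacle.

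Finally I would set $N_D:=N_{(D\ominus D_\textup{min})^c}$ and define $\phi\colon N_D\to\widetilde{M}_\mathcal{G}$ by sending, for each tile $G_k$ of $(D\ominus D_\textup{min})^c$, the $K^2$ sitting at $G_k$ identically onto the full slot of $\widetilde{M}_\mathcal{G}$ indexed by $G_k$ when $G_k\in\mathcal{T}_d$, and the $K$ sitting at $G_k$ isomorphically onto the $\epsilon$-part of that slot when $G_k\in\mathcal{T}\setminus\mathcal{T}_d$; repeated labels are handled by taking the direct sum of these maps, and the distinct connected components of $(D\ominus D_\textup{min})^c$ occupy disjoint sets of slots and so are treated independently. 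Checking that $\phi$ is a morphism of $\widetilde{\Lambda}$-representations is then a finite verification: for an arrow of $Q$ both actions are diagonal and the matrices of that arrow on $N_D$ (an identity, or $\begin{bsmallmatrix}0\\1\end{bsmallmatrix}$, or $\begin{bsmallmatrix}1&0\end{bsmallmatrix}$, according to whether its tail and head are single or doubled) match the diagonal action on the chosen slots precisely because $N_1$ and $N_2$ are arrow-closed; for a loop $\epsilon_i$ the one thing to see is that it acts as zero both on the single tiles of $N_D$ and on the $\epsilon$-part of $\widetilde{M}_\mathcal{G}$, which is exactly why single tiles must be sent into the $\epsilon$-part. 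Then $\phi$ is injective by construction, its image is the submodule $\widetilde{N_1}+N_2\epsilon$ of $\widetilde{M}_\mathcal{G}$, and the equality $\underline{\dim}(N_D)=h((D\ominus D_\textup{min})^c)$ already holds since $h(\mathcal{G}_w)=\underline{\dim}(N_w)$ by construction; hence $\phi$ is the asserted embedding.
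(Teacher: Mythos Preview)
Your proof is correct and takes a genuinely different route from the paper's. The paper works component by component: it writes $(D\ominus D_{\min})^c=\bigcup_i\mathcal{H}_i$, notes each $\mathcal{H}_i$ is optimal by Lemma~\ref{loop-mazimal-zigzag}, passes to the loopy word $w_i$ and the representation $N_{w_i}$, and then asserts in one line that the canonical snake-graph embedding $\mathcal{H}_i\hookrightarrow\mathcal{G}$ induces $N_{w_i}\hookrightarrow\widetilde{M}_\mathcal{G}$ because $N_{w_i}$ is obtained from $\widetilde{M}_\mathcal{G}$ by removing a sequence of tops. You instead work globally: you realise $\widetilde{M}_\mathcal{G}$ concretely with its $1$-part and $\epsilon$-part, decompose $D=P\cup P'$ with $P\le P'$ via Remark~\ref{remark_superimpose_single_dimers}, pull back through \cite{CS21} to obtain the nested pair $N_1\le N_2\le M_\mathcal{G}$, and then write down $\phi$ slot by slot and check the relations directly.

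What each approach buys: the paper's is shorter and stays inside the loopy-string formalism it has just set up, but the ``removing tops'' claim is left to the reader. Your approach is more explicit and, importantly, identifies the image of $\phi$ as $\{n_1\otimes 1+n_2\otimes\epsilon:n_1\in N_1,\,n_2\in N_2\}$, which is precisely the general shape of submodules of induced modules described just before the definition of $F(N)$ in Section~\ref{Sec:tensoring}. That link is not made explicit in the paper's proof of this proposition, and it makes the later identification of $F(N_D)$ with the single-edge cycles (Lemma~\ref{lemma_tethas_behave}) transparent. One small remark: once you have $N_1=N_P$ and $N_2=N_{P'}$ as submodules via \cite{CS21}, arrow-closedness of $\mathcal{T}_d$ is automatic, so your invocation of optimality there is an independent confirmation rather than a necessary step; conversely, optimality is exactly what guarantees that the case ``arrow from $K^2$ to $K$'' in the definition of $N_w$ never actually occurs, which is why your slot-by-slot $\phi$ commutes with every arrow of $Q$.
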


 \begin{proof}
     Let $(D \ominus D_{\textup{min}})^c =\bigcup \mathcal{H}_i$. Note that each $\mathcal{H}_i$ is an optimal snake multigraph by Lemma~\ref{loop-mazimal-zigzag}. 
      Let $\varphi_i: {\mathcal{H}}_i \hookrightarrow \mathcal{G}$ be the canonical embedding of snake multigraphs for each $i$ and denote by $w_i$ the induced
      optimal loopy string for each $\mathcal{H}_i$. Then $\varphi_i: N_{{w}_i} \hookrightarrow M_{\mathcal{G}}$ is a canonical embedding such that $\underline{\dim} (N_{{w}_i}) = h({\mathcal{H}}_i)$ by the construction of $N_w$ and
      the definition of face function in~\ref{defn:face-function}. Note also that $N_{w_i}$ is a submodule of $M_\mathcal{G}$ as $N_{w_i}$ is obtained from $M_\mathcal{G}$ by removing a sequence of tops. \end{proof}

\begin{proposition}\label{lemma_decomposition} 
   We may decompose $D\ominus D_\textup{min}$ as a union of enclosed graphs in the following way. Decompose each (enclosed) connected component $C_i$ of $D\ominus D_\textup{min}$ into an enclosed graph $C_{i,1}$ consisting of one copy of each boundary edge of $C_i$, the remaining (single) edges then form enclosed graphs $C_{i,2},\dots C_{i,m}$. The completions of $C_{i,1},\dots, C_{i,m}$ are snake graphs. 
\end{proposition}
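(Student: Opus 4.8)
The plan is to produce the decomposition by stripping off, from each connected component $C_i$ of $D\ominus D_\textup{min}$, the cycle formed by its ``outer'' boundary edges, and then to observe that what remains consists of single edges arranged in cycles. I would use throughout that, by Lemmas~\ref{lem:enclosedtiles} and \ref{loop-mazimal-zigzag}, the completion $\mathcal H_i=(C_i)^c$ is a good, optimal snake sub-multigraph, so that the tiles it encloses form a consecutive block $\mathcal R_i$ of $\mathcal G$; and also that every edge $e$ satisfies $m_{D\ominus D_\textup{min}}(e)=|m_D(e)-m_{D_\textup{min}}(e)|\le 2$, where $m_{-}(e)$ denotes multiplicity, so all multiplicities occurring are $1$ or $2$.

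The first step is to set $C_{i,1}$ equal to one copy of each edge of $C_i$ lying on the boundary of the planar region $\mathcal R_i$. Each such edge is a side of a single tile of $\mathcal R_i$ not shared with any other tile of $\mathcal R_i$, and since $C_i$ encloses $\mathcal R_i$ these edges lie in $C_i$ and together trace out the boundary cycle of $\mathcal R_i$; completing $C_{i,1}$ restores precisely the interior edges of that block, so $(C_{i,1})^c$ is the snake graph on $\mathcal R_i$. The key point is that $C_i':=C_i\setminus C_{i,1}$ is then made of single edges only: a boundary edge of $\mathcal R_i$ has multiplicity at most $2$ in $C_i$, hence at most $1$ in $C_i'$, while an edge $e$ shared by two tiles of $\mathcal R_i$ cannot have multiplicity $2$ in $C_i$ — since $D_\textup{min}$ contains no interior edges of $\mathcal G$, such an $e$ would have to be a double dimer of $D$, and then the two endpoints it doubly covers, together with the goodness of $\mathcal H_i$ at the two adjoining tiles and the optimality condition of Lemma~\ref{loop-mazimal-zigzag} (a maximal direct zig-zag can be doubled only on a terminal segment, a maximal inverse zig-zag only on an initial segment), force a contradiction. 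I expect this verification to be the only real obstacle, and I would carry it out by induction on the number of tiles of $\mathcal G$, exactly in the spirit of the proofs of Lemmas~\ref{lem:enclosedtiles} and \ref{loop-mazimal-zigzag}: delete the last tile, pass to the induced double dimer cover of the smaller snake graph via the four-case recipe used in the proof of Lemma~\ref{lem:enclosedtiles}, apply the inductive statement, and then reattach the last tile, checking the claim in each of the four cases.

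With $C_i'$ now a single-edged subgraph of $\mathcal G$, the last step is a degree count. Every vertex of a snake graph lies on its boundary, hence on the cycle underlying $C_{i,1}$, and every vertex has degree $0$, $2$ or $4$ in $D\ominus D_\textup{min}$, since $D$ and $D_\textup{min}$ each cover it exactly twice. Deleting $C_{i,1}$ therefore lowers each relevant degree by $2$, leaving every vertex of $C_i'$ of degree $0$ or $2$, so $C_i'$ is a disjoint union of cycles $C_{i,2},\dots,C_{i,m}$. Each of these is a cycle in the planar snake graph $\mathcal G$ and so bounds a consecutive block of tiles, whence its completion is again a snake graph. This yields the asserted decomposition of $C_i$ into the enclosed graphs $C_{i,1},\dots,C_{i,m}$, all of whose completions are snake graphs.
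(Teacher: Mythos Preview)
Your overall plan is sound and can be completed to a correct proof, but it follows a genuinely different route from the paper's. The paper's argument pivots on one structural observation: since $D_\textup{min}$ consists only of doubled edges, every edge of multiplicity~$1$ in $C_i=D\ominus D_\textup{min}$ is already a single edge of $D$, and the single edges of any double dimer cover organise themselves into vertex-disjoint cycles with no further $D$-edges touching their vertices. From this the decomposition is read off directly: any $C_i$-edge adjacent to such a cycle but not on it must be a doubled contribution of $D_\textup{min}$, so after peeling off one boundary layer the leftover halves of those doubles, together with the interior edges of the $D$-cycles, reassemble into the smaller enclosed pieces $C_{i,2},\dots,C_{i,m}$. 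No separate lemma about interior edges and no induction is needed.

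Your degree-parity argument at the end is clean---arguably cleaner than the paper's somewhat informal final paragraph---but you pay for it with the intermediate claim that no edge interior to $\mathcal R_i$ carries multiplicity~$2$ in $C_i$. The claim is correct, but the route you sketch via goodness, optimality, and an induction in the style of Lemmas~\ref{lem:enclosedtiles} and~\ref{loop-mazimal-zigzag} is heavier than necessary and left only as a plan. A shorter justification, much closer to the paper's mechanism, is available: if such an $e$ were doubled in $D$, then $D$ contains nothing else at its endpoints, and since $D_\textup{min}$ covers each endpoint with a single doubled boundary edge, the two ``side'' edges of one of the adjacent tiles at those endpoints lie in neither $D$ nor $D_\textup{min}$ and hence are absent from $C_i$; as the snake graph is a path of tiles, this disconnects that tile from the component containing $e$, so it is not in $\mathcal R_i$ and $e$ is a boundary edge of $\mathcal R_i$ after all.
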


\begin{proof}
    Consider a connected component $C_i$ of $D\ominus D_\textup{min}$. If $C_i$ consists only of double edges, then in this portion of $\mathcal{G}$, we have that $D$ and $D_\textup{min}$ consist only of complementary double edges covering the boundary edges of $C_i$. Then $C_i$ can be decomposed into two copies of $C_{i,1}$, constructed as described in the statement.
    Similarly, if there is a portion of $C_i$ containing boundary (in the sense of the boundary of $C_i$) double edges, there cannot be internal edges in this portion, apart from possibly at the end.
    Suppose now there is at least one single edge in $C_i$. Since $D_\textup{min}$ only consists of double edges, the single edges in $C_i$ correspond to single edges in $D$. By construction of double dimer covers, all single edges in $D$ appear in some cycle and moreover there are no edges in $D$ adjacent and not contained in such a cycle. Hence if there are edges adjacent to single edges in $C_i$, these need to be double edges belonging to $D_\textup{min}$.
    Putting all of the above together, selecting one copy of each boundary edge of $C_i$ to form $C_{i,1}$ we are left with either an empty set (if $C_i$ only consisted of single edges) or some enclosed graphs obtained by putting together the last tile of a single cycle and the remaining copies of double edges.
\end{proof}

We illustrate Proposition~\ref{lemma_decomposition} in an example.

\begin{example}
Consider the following snake graph, where $D_\textup{min}$ is indicated in red, a choice of a double dimer cover $D$ in blue and we have computed $D\ominus D_{\textup{min}}$.
    \begin{figure}[h!]\scalebox{0.8}{
    \centering
 \begin{tikzpicture}
 \begin{scope}
  \draw (0,0) \rectanglepath;
  \draw (1,0) \rectanglepath;
  \draw (2,0) \rectanglepath;
  \draw (3,0) \rectanglepath;
  \draw (4,0) \rectanglepath;
  \draw[very thick, blue]  (0,1)--(1,1);
 \draw[very thick, blue]   (0,0)--(1,0);
  \draw[very thick, blue]  (1,0)--(1,1);
  \draw[very thick, blue]  (0,0)--(0,1);
  \draw[very thick, blue]  (2,1)--(3,1);
 \draw[very thick, blue]   (2,0)--(3,0);
  \draw[very thick, blue]  (3,0)--(3,1);
  \draw[very thick, blue]  (2,0)--(2,1);
   \draw[very thick, blue]  (4,1)--(5,1);
 \draw[very thick, blue]   (4,0)--(5,0);
  \draw[very thick, blue]  (5,0)--(5,1);
  \draw[very thick, blue]  (4,0)--(4,1);
 \draw (0.5,0.5) node{\Large $1$};
  \draw (1.5,0.5) node{\Large $2$};
  \draw (2.5,0.5) node{\Large $3$};
  \draw (3.5,0.5) node{\Large $4$};
  \draw (4.5,0.5) node{\Large $5$};
  \end{scope}
  \begin{scope}[xshift=5.5cm, yshift=0.5cm]
      \draw (0,0) node {\Large $\ominus$};
   \end{scope}
\begin{scope}[xshift=6cm]
  \draw (0,0) \rectanglepath;
  \draw (1,0) \rectanglepath;
  \draw (2,0) \rectanglepath;
  \draw (3,0) \rectanglepath;
  \draw (4,0) \rectanglepath;
 \draw[very thick, double, double distance=1.3pt, red] (0,0)--(0,1);
 \draw[very thick, double, double distance=1.3pt, red] (1,1)--(2,1);
 \draw[very thick, double, double distance=1.3pt, red] (1,0)--(2,0);
 \draw[very thick, double, double distance=1.3pt, red] (3,1)--(4,1);
 \draw[very thick, double, double distance=1.3pt, red] (3,0)--(4,0);
 \draw[very thick, double, double distance=1.3pt, red] (5,0)--(5,1);
 \draw (0.5,0.5) node{\Large $1$};
  \draw (1.5,0.5) node{\Large $2$};
  \draw (2.5,0.5) node{\Large $3$};
  \draw (3.5,0.5) node{\Large $4$};
  \draw (4.5,0.5) node{\Large $5$};
  \end{scope}
  \begin{scope}[xshift=11.5cm, yshift=0.5cm]
      \draw (0,0) node {\Large $=$};
   \end{scope}
\begin{scope}[xshift=12cm]
  \draw (0,0) \rectanglepath;
  \draw (1,0) \rectanglepath;
  \draw (2,0) \rectanglepath;
  \draw (3,0) \rectanglepath;
  \draw (4,0) \rectanglepath;
 \draw[very thick, double, double distance=1.3pt, red] (1,1)--(2,1);
 \draw[very thick, double, double distance=1.3pt, red] (1,0)--(2,0);
 \draw[very thick, double, double distance=1.3pt, red] (3,1)--(4,1);
 \draw[very thick, double, double distance=1.3pt, red] (3,0)--(4,0);
  \draw[very thick, blue]  (0,1)--(1,1);
 \draw[very thick, blue]   (0,0)--(1,0);
  \draw[very thick, blue]  (1,0)--(1,1);
  \draw[very thick, red]  (0,0)--(0,1);
  \draw[very thick, blue]  (2,1)--(3,1);
 \draw[very thick, blue]   (2,0)--(3,0);
  \draw[very thick, blue]  (3,0)--(3,1);
  \draw[very thick, blue]  (2,0)--(2,1);
   \draw[very thick, blue]  (4,1)--(5,1);
 \draw[very thick, blue]   (4,0)--(5,0);
  \draw[very thick, red]  (5,0)--(5,1);
  \draw[very thick, blue]  (4,0)--(4,1);
 \draw (0.5,0.5) node{\Large $1$};
  \draw (1.5,0.5) node{\Large $2$};
  \draw (2.5,0.5) node{\Large $3$};
  \draw (3.5,0.5) node{\Large $4$};
  \draw (4.5,0.5) node{\Large $5$};
  \end{scope}
   \end{tikzpicture}}
   \end{figure}
Note that $D\ominus D_{\textup{min}}$ consists of a single connected component and there are multiple ways of decomposing it into a union of enclosed graphs whose completion is a union of snake graphs. One of these ways is the one described in Proposition~\ref{lemma_decomposition}, that is:
\begin{figure}[h!]\scalebox{0.8}{
    \centering
    \begin{tikzpicture}
       \begin{scope}
  \draw (0,0) \rectanglepath;
  \draw (1,0) \rectanglepath;
  \draw (2,0) \rectanglepath;
  \draw (3,0) \rectanglepath;
  \draw (4,0) \rectanglepath;
 \draw[very thick, red] (1,1)--(2,1);
 \draw[very thick, red] (1,0)--(2,0);
 \draw[very thick, red] (3,1)--(4,1);
 \draw[very thick, red] (3,0)--(4,0);
  \draw[very thick, blue]  (0,1)--(1,1);
 \draw[very thick, blue]   (0,0)--(1,0);
  \draw[very thick, red]  (0,0)--(0,1);
  \draw[very thick, blue]  (2,1)--(3,1);
 \draw[very thick, blue]   (2,0)--(3,0);
   \draw[very thick, blue]  (4,1)--(5,1);
 \draw[very thick, blue]   (4,0)--(5,0);
 \draw[very thick, red]  (5,0)--(5,1);
 \draw (0.5,0.5) node{\Large $1$};
  \draw (1.5,0.5) node{\Large $2$};
  \draw (2.5,0.5) node{\Large $3$};
  \draw (3.5,0.5) node{\Large $4$};
  \draw (4.5,0.5) node{\Large $5$};
  \end{scope}
  \begin{scope}[xshift=5.5cm, yshift=0.5cm]
      \draw (0,0) node {\Large $\bigsqcup$};
   \end{scope}
   \begin{scope}[xshift=6cm]
      \draw (0,0) \rectanglepath;
       \draw[very thick,  red] (0,0)--(1,0) (0,1)--(1,1);
       \draw[very thick,  blue] (0,0)--(0,1) (1,0)--(1,1);
      \draw (0.5,0.5) node{\Large $2$};
   \end{scope}
   \begin{scope}[xshift=7.5cm, yshift=0.5cm]
      \draw (0,0) node {\Large $\bigsqcup$};
   \end{scope}
   \begin{scope}[xshift=8cm]
      \draw (0,0) \rectanglepath;
       \draw[very thick,  red] (0,0)--(1,0) (0,1)--(1,1);
       \draw[very thick,  blue] (0,0)--(0,1) (1,0)--(1,1);
      \draw (0.5,0.5) node{\Large $4$};
   \end{scope}
    \end{tikzpicture}}
\end{figure}

The module over $\widetilde{\Lambda}$ corresponding to $D$ is then $N:=N_D=\begin{smallmatrix}
    1&2&3&4&5\\
    &2&&4
\end{smallmatrix}$. Using the notation from Section~\ref{Sec:tensoring}, we have that the union of the two cycles enclosing tile $2$ and tile $4$ corresponds to the module $N\epsilon=\begin{smallmatrix}2\end{smallmatrix}\oplus \begin{smallmatrix}4\end{smallmatrix}$, while the cycle enclosing the five tiles corresponds to the module $N/N\epsilon=\begin{smallmatrix}
    1&&3&&5\\
    &2&&4
\end{smallmatrix}$, both viewed as modules over $\Lambda$. Moreover, note that this is true more generally. In fact, given a double dimer $D$, with associated module $N$ in $\module \widetilde{\Lambda}$, using the notation from Proposition~\ref{lemma_decomposition}, we have that $\bigcup_{i}\bigcup_j C_{i,j}$, for $j\geq 2$,  corresponds to $N\epsilon$, while $\bigcup_i C_{i,1}$ corresponds to $N/N\epsilon$ as objects in $\module\Lambda$.
\end{example}

Conversely, given an induced module $\widetilde M$ and a submodule $N$, we wish to associate a double dimer cover. Suppose $\mathcal{G}$ is the snake graph associated with a single copy of the restriction of $\widetilde{M}$ to $\Lambda$, that is the snake graph of $M$. Let $N=N_1 \oplus \dots \oplus N_k$,  with canonical embedding $\varphi$ and $w_j$ be the abstract optimal loopy word associated with $N_j$, for $j=1,\dots,k$, and $\mathcal{H}_i$ be the optimal  snake sub-multigraph of $\mathcal{G}$ associated with $w_j$ corresponding to the embedding $\varphi$.  
    Let  $E(\mathcal{H}_i)$ denote the collection of all double edges in $\mathcal{H}_i$ that are  not in $D_{\text{min}}$  of $\mathcal{G}$ plus all the single boundary edges in $\mathcal{H}_i$. Moreover, in $E(\mathcal{H}_i)$, complete any adjacent collection of single edges to a cycle (by adding either the missing bottom or left edge of the first tile and the missing top or right edge of the last tile).
Set

\[
D_\varphi=(\bigcup\limits_{i=1}^k E(\mathcal{H}_i))\cup D_{\textup{min}}\mid_{\mathcal{G}\backslash 
(\bigcup\limits_{i=1}^k\mathcal{H}_i)}.
\]

 \begin{example}
     Consider the submodule $N=\begin{smallmatrix}
        &&3\\
        1&2&3\\
        &2
    \end{smallmatrix}\oplus \begin{smallmatrix}6\\6\end{smallmatrix}$
    of the induced module $\widetilde{M}$ in $\module\widetilde\Lambda$ for $M=\begin{smallmatrix}
        &&&&5&\\
        &&&4&&6\\
        1&&3&&&\\
        &2
    \end{smallmatrix}$ and
    
    \begin{align*}\widetilde\Lambda = K \Bigg(\begin{tikzcd}[ampersand replacement=\&, arrow style=tikz,>=stealth,row sep=2em,column sep=2em]
1\arrow[r] \arrow[out=60,in=120, distance=1.5em,loop]\& 2
\arrow[out=60,in=120,distance=1.5em,loop]\& 3\arrow[out=60,in=120,distance=1.5em,loop]\arrow[l]\& 4\arrow[out=60,in=120,distance=1.5em,loop]\arrow[l]\& 5\arrow[out=60,in=120,distance=1.5em,loop]\arrow[l]\arrow[r]\& 6\arrow[out=60,in=120,distance=1.5em,loop]
\end{tikzcd}\Bigg)\Bigg/I.
\end{align*}
    Then 
    \begin{align*}
w_1=\begin{tikzcd}[ampersand replacement=\&, arrow style=tikz,>=stealth,row sep=2em,column sep=2em]
1\arrow[r] \& 2
\arrow[out=60,in=120,distance=1.5em,loop]\& 3\arrow[out=60,in=120,distance=1.5em,loop]\arrow[l]
\end{tikzcd}, \text{ and } w_2=\begin{tikzcd}[ampersand replacement=\&, arrow style=tikz,>=stealth,row sep=2em,column sep=2em]6.\arrow[out=60,in=120,distance=1.5em,loop]\end{tikzcd}
\end{align*}
The optimal snake sub-multigraphs of $\begin{tikzpicture}[scale=0.5]
  \draw (-0.8,0.5) node{$\mathcal{G}=$};
  \draw (0,0) \rectanglepath;
  \draw (1,0) \rectanglepath;
  \draw (2,0) \rectanglepath;
  \draw (2,1) \rectanglepath;
  \draw (3,1) \rectanglepath;
  \draw (4,1) \rectanglepath;
  \draw (0.5,0.5) node{\tiny $1$};
  \draw (1.5,0.5) node{\tiny $2$};
  \draw (2.5,0.5) node{\tiny $3$};
  \draw (2.5,1.5) node{\tiny $4$};
  \draw (3.5,1.5) node{\tiny $5$};
  \draw (4.5,1.5) node{\tiny $6$};
  \end{tikzpicture}$ associated to $w_1$ and $w_2$ are respectively
$ \begin{tikzpicture}[scale=0.5]
  \draw (-0.8,0.5) node{$\mathcal{H}_1:\,\,$};
  \draw (0,0) \rectanglepath;
  \draw (1,0) \rectanglepath;
  \draw (2,0) \rectanglepath;
  \draw[very thick, double, double distance=1.3pt] (1,0)--(2,0);
  \draw[very thick, double, double distance=1.3pt] (1,1)--(2,1);
  \draw[very thick, double, double distance=1.3pt] (2,0)--(3,0);
  \draw[very thick, double, double distance=1.3pt] (2,1)--(3,1);
  \draw[very thick, double, double distance=1.3pt] (3,0)--(3,1);
  \draw (0.5,0.5) node{\tiny $1$};
  \draw (1.5,0.5) node{\tiny $2$};
  \draw (2.5,0.5) node{\tiny $3$};
  \end{tikzpicture}$
   and $ \begin{tikzpicture}[scale=0.5]
  \draw (-0.8,0.5) node{$\mathcal{H}_2:\,\,$};
  \draw (0,0) \rectanglepath;
  \draw[very thick, double, double distance=1.3pt] (0,0)--(1,0);
  \draw[very thick, double, double distance=1.3pt] (1,0)--(1,1);
  \draw[very thick, double, double distance=1.3pt] (1,1)--(0,1);
  \draw[very thick, double, double distance=1.3pt] (0,1)--(0,0);
  \draw (0.5,0.5) node{\tiny $6$};
  \end{tikzpicture}$.
Then
\begin{align*}
D_\varphi={\color{blue} E(\mathcal{H}_1)}\cup {\color{magenta} E(\mathcal{H}_2)}\cup  {\color{cyan} D_{\textup{min}}\mid_{\mathcal{G}\backslash (\mathcal{H}_1\cup \mathcal{H}_2)}}=
   \begin{tikzpicture}[scale=0.7]
  \draw (1,0) \rectanglepath;
  \draw[very thick, blue] (1,0)--(0,0)--(0,1)--(1,1);
  \draw[very thick, dashed, blue] (1,0)--(1,1);
  \draw (2,0) \rectanglepath;
  \draw (2,1) \rectanglepath;
  \draw (3,1) \rectanglepath;
  \draw (4,1) \rectanglepath;
  \draw[very thick, blue, double, double distance=1.3pt] (2,0)--(3,0);
  \draw[very thick, blue, double, double distance=1.3pt] (2,1)--(3,1);
  \draw[very thick, magenta, double, double distance=1.3pt] (4,1)--(4,2);
  \draw[very thick, magenta, double, double distance=1.3pt] (5,1)--(5,2);
  \draw[very thick, cyan, double, double distance=1.3pt] (2,2)--(3,2);
  \draw (0.5,0.5) node{\tiny $1$};
  \draw (1.5,0.5) node{\tiny $2$};
  \draw (2.5,0.5) node{\tiny $3$};
  \draw (2.5,1.5) node{\tiny $4$};
  \draw (3.5,1.5) node{\tiny $5$};
  \draw (4.5,1.5) node{\tiny $6$};
  \end{tikzpicture} 
\end{align*}
 where the dashed edge on the first tile is obtained by completion to a cycle.
 Moreover, observe that  we have 
  $(D_\textup{min}\ominus D_\varphi)^c = \mathcal{H}_1\cup \mathcal{H}_2$.
\end{example}

 \begin{proposition}\label{prop:submod--dimer}
     With the notation above, the set of edges $D_\varphi$ of $\mathcal{G}$ is a double dimer cover of $\mathcal{G}$ and the submodule $N_{D_\varphi}$ of $\widetilde M$ associated with $D_\varphi$ agrees with the canonical submodule $N$ with the embedding $\varphi$.  
 \end{proposition}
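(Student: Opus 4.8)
The plan is to prove the two assertions in turn: first that $D_\varphi$ is a genuine double dimer cover of $\mathcal{G}$ (every vertex incident to exactly two edges), and then that the submodule it determines via Proposition~\ref{prop:dimer--submod} is $N$ together with the embedding $\varphi$. Before either step I would record a disjointness fact: since $\widetilde M$ is indecomposable its restriction to $\Lambda$ is $M\oplus M$ with $M$ a string module, so a submodule $N=N_1\oplus\dots\oplus N_k$ embeds via $\varphi$ so that the supports of the $N_j$ inside $\widetilde M$ are pairwise disjoint; translating through the dictionary of Section~\ref{Sec:latticebij}, the optimal snake sub-multigraphs $\mathcal{H}_i$ occupy pairwise disjoint sets of tiles of $\mathcal{G}$, and moreover no tile of $\mathcal{G}$ outside $\bigcup\mathcal{H}_i$ shares a vertex with two distinct $\mathcal{H}_i$.

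\textbf{Step 1: $D_\varphi$ is a double dimer cover.} By Lemma~\ref{loop-mazimal-zigzag} each $\mathcal{H}_i$ is an optimal snake sub-multigraph, so I would check tile by tile along each maximal zig-zag that $E(\mathcal{H}_i)$ — the non-$D_\textup{min}$ double edges of $\mathcal{H}_i$ together with its single boundary edges, completed to cycles — covers every vertex of $\mathcal{H}_i$ exactly twice; Remark~\ref{remark_headofarrow} is what pins down that the edges added in the completion are precisely the ones that $D_\textup{min}$ would have placed there. The vertices of $\mathcal{G}$ not covered by any $E(\mathcal{H}_i)$ are then exactly those of $\mathcal{G}\setminus\bigcup\mathcal{H}_i$, where $D_\textup{min}$ double-covers them. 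The only vertices needing care are those on tiles straddling the boundary between some $\mathcal{H}_i$ and its complement: there I would show that the cycle-completion edges of $E(\mathcal{H}_i)$ together with $D_\textup{min}\mid_{\mathcal{G}\setminus\bigcup\mathcal{H}_i}$ give total multiplicity exactly two, which is the boundary bookkeeping from the proof of Lemma~\ref{lem:enclosedtiles} run in reverse, and is again most cleanly handled by induction on the number of tiles.

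\textbf{Step 2: identifying the submodule.} By construction $D_\varphi$ and $D_\textup{min}$ agree away from $\bigcup\mathcal{H}_i$, while on $\bigcup\mathcal{H}_i$ their symmetric difference is exactly the edge set of $\bigcup\mathcal{H}_i$: the double edges of $\mathcal{H}_i$ not in $D_\textup{min}$ survive, the single boundary edges survive, and the double edges of $D_\textup{min}$ lying inside each enclosed region cancel against those produced by the cycle-completion. Hence $(D_\varphi\ominus D_\textup{min})^c=\bigcup\mathcal{H}_i$, exactly as in the worked example. Applying Proposition~\ref{prop:dimer--submod} now gives that $N_{D_\varphi}$ is the submodule of $\widetilde M$ obtained by gluing the representations $N_{w_i}$ along the canonical embeddings $\varphi_i:\mathcal{H}_i\hookrightarrow\mathcal{G}$; by the choice of the $w_i$ and of $\varphi$ this is precisely $N=\bigoplus_j N_j$ with the embedding $\varphi$. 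In particular $\underline{\dim}(N_{D_\varphi})=h(\bigcup\mathcal{H}_i)=\sum_i h(\mathcal{H}_i)=\underline{\dim}(N)$, and since $N_{D_\varphi}$ and $N$ are submodules of $\widetilde M$ realised by the same structure maps they coincide.

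The hard part will be Step~1, and within it the seam between an $\mathcal{H}_i$ and the rest of $\mathcal{G}$: I need that the completion of a single-edge cycle happens on the side compatible with $D_\textup{min}$, so that the cover closes up with no vertex over- or under-counted. Optimality of $\mathcal{H}_i$ (Lemma~\ref{loop-mazimal-zigzag}) is exactly the hypothesis that makes the completion land on the correct side, and I expect the verification to go through by the same induction on tile count used in Lemmas~\ref{lem:enclosedtiles} and~\ref{loop-mazimal-zigzag}, with the base case a single tile (minimal, maximal, or four-single-edge double dimer cover) and the inductive step removing the terminal tile of a component.
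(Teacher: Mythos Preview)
Your proposal is correct and follows essentially the same route as the paper: first verify that each $E(\mathcal{H}_i)$ is a double dimer cover on $\mathcal{H}_i$, then check that these glue to $D_\textup{min}$ on the complement, and finally recover $N$ by computing $(D_\varphi\ominus D_\textup{min})^c=\bigcup_i\mathcal{H}_i$ and invoking Proposition~\ref{prop:dimer--submod}. The only notable difference is in how Step~1's seam argument is executed: you propose induction on the number of tiles, whereas the paper gives a direct argument showing that at the boundary of a single-edge cycle $(G_s,\dots,G_t)$ the adjacent doubled tile $G_{s-1}$ must meet $G_s$ along an \emph{inverse} arrow (else optimality would force $G_s$ to be doubled too), and then traces the maximal inverse string down to its socle to pin the $D_\textup{min}$ edges via Remark~\ref{remark_headofarrow}. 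Both arguments land on the same conclusion---that the completion edge sits on the side compatible with $D_\textup{min}$---and your identification of this as the crux, with optimality as the enabling hypothesis, is exactly right.
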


 \begin{proof}

 We will verify first that each $E(\mathcal{H}_i)$ is a double dimer cover of $\mathcal{H}_i$. If $\mathcal{H}_i$ consists of only single edges, then $E(\mathcal{H}_i)$ contains the boundary of $\mathcal{H}_i$ and thus is a double dimer cover. 
 
Suppose now $C$ is a cycle in $E(\mathcal{H}_i)$ on the tiles $(G_s,\dots,G_t)$. Note that if either there exists a tile $G_{s-1}$ preceding $C$ or a tile $G_{t+1}$ succeeding $C$ in $\mathcal{H}_i$, then these tiles must have double boundary edges in $\mathcal{H}_i$, because otherwise we could have extended $C$ to a larger cycle. Moreover, all the (double) boundary edges of $G_{s-1}$ or $G_{t+1}$ must be in $\mathcal{H}_i$ as $\mathcal{H}_i$ is an optimal (and therefore a good) snake multigraph.

 We claim that the double boundary edge of $G_{s-1}$ or $G_{t+1}$ that is adjacent to the tile $G_s$ or $G_t$, respectively, must be in the minimal matching. We will only argue for the existence of $G_{s-1}$ as the other case is similar.

 Notice that the tiles $(G_{s-1},G_s)$ give rise to an \emph{inverse} arrow because otherwise since $\mathcal{H}_i$ is an optimal snake multigraph and $G_{s-1}$ contains two copies of its boundary edges, $G_s$ would also contain two copies of its boundary edges. 

Consider the maximal inverse string associated to $\mathcal{G}$ that contains the vertices $s-1 \leftarrow s$ and suppose the socle vertex of this maximal string is $w$. Note that the tile $G_w$ corresponding to the vertex $w$ will have two copies of the boundary edges in $D_{\text{min}}(\mathcal{G})|_{\mathcal{H}_i}$  by Remark~\ref{remark_headofarrow} since $w$ is a socle. Without loss of generality, we may assume the tile $G_{s-1}$ is to the left of $G_s$ so the minimal double dimer cover would have the (blue) double edges indicated in the figure below. This implies that two copies of the boundary edge of $G_{s-1}$ that is not adjacent to $G_s$ is in $E(\mathcal{H}_i)$. Combining these edges with the cycles that is obtained in the construction of $E(\mathcal{H}_i)$ will give rise to a double double cover on $\mathcal{H}_i$, thus each $E(\mathcal{H}_i)$ is a double dimer cover on $\mathcal{H}_i$.

\begin{center}
    \begin{tikzpicture}[scale=0.75]
  \draw (0,0) \rectanglepath;
  \draw (0.5,0.5)  node[scale=.5]{$w-1$};
  \draw (1.5,0.5)  node[scale=.5]{$w$};
  \draw (2.5,0.5)  node[scale=.5]{$w+1$};
  \draw (2.5,1.5)  node[scale=.5]{$w+2$};
  \draw (1,0) \rectanglepath;
    \draw (2,0) \rectanglepath;
      \draw (2,1) \rectanglepath;
      \draw [->, red](0.7,0.5)--(1.3,0.5);
      \draw [->, red](2.3,0.5)--(1.7,0.5);
      \draw [->, red](2.5,1.3)--(2.5,0.7);
      \draw [dotted](3.2,1.5)--(4.8,2.5);
      \draw (5,2) \rectanglepath;
      \draw (5,3) \rectanglepath;
    \draw (6,3) \rectanglepath;
    \draw (5.5,2.5)  node[scale=.5]{$s-2$};
  \draw (5.5,3.5)  node[scale=.5]{$s-1$};
  \draw (6.5,3.5)  node[scale=.5]{$s$};
  \draw [->, red](5.5,3.3)--(5.5,2.8);
  \draw [<-, red](5.8,3.5)--(6.3,3.5);
  \draw [dotted](7.2,4)--(8,4.7);
  \draw [dotted](-.2,0)--(-1,-.7);
 \draw[very thick, blue, double, double distance=1.3pt]  (0,0)--(0,1);
 \draw[very thick, blue, double, double distance=1.3pt]  (1,0)--(2,0);
   \draw[very thick, blue, double, double distance=1.3pt]  (1,1)--(2,1);
     \draw[very thick, blue, double, double distance=1.3pt]  (3,0)--(3,1);
       \draw[very thick, blue, double, double distance=1.3pt]  (2,2)--(3,2);
       \draw[very thick, blue, double, double distance=1.3pt]  (6,2)--(6,3);
       \draw[very thick, blue, double, double distance=1.3pt]  (5,4)--(6,4);
  \end{tikzpicture}
  \end{center}

Consider the minimal dimer cover on $\mathcal{G}$ and replace each $D_{\text{min}}|_{\mathcal{H}_i}$ by $E(\mathcal{H}_i)$. 
This is a double dimer cover on $\mathcal{G}$ and coincides with $D_\varphi$. In view of Proposition~\ref{prop:dimer--submod}, the canonical submodule associated to $D_\varphi$ agrees with $N$, as required.
 \end{proof}

\begin{theorem}\label{thm_lattice_bijection_double}
         With the notation above, the lattice of the double dimer covers of $\mathcal{G}$ is in bijection with the submodule lattice of $\widetilde M_\mathcal{G}$.
 \end{theorem}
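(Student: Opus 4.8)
The plan is to build the bijection out of the two constructions already established and then to promote it to an isomorphism of lattices by checking it on cover relations. By Proposition~\ref{prop:dimer--submod} we have a map $\Phi\colon \mathcal{DD}(\mathcal{G})\longrightarrow \{\text{submodules of }\widetilde M_\mathcal{G}\}$, $D\mapsto N_D$, where $N_D$ is the submodule cut out by the optimal snake sub-multigraph $(D\ominus D_\textup{min})^c$ and $\underline{\dim}(N_D)=h\big((D\ominus D_\textup{min})^c\big)$. Conversely, writing every submodule of $\widetilde M_\mathcal{G}$ as $N=\{n_1\otimes 1+n_2\otimes\epsilon\mid n_1\in N_1, n_2\in N_2\}$ for submodules $N_1\le N_2\le M$ (as in Section~\ref{Sec:tensoring}) and equipping it with its inclusion $\varphi$, Proposition~\ref{prop:submod--dimer} produces a double dimer cover $\Psi(N)=D_\varphi$ with $N_{D_\varphi}=N$; thus $\Phi\circ\Psi=\mathrm{id}$, so $\Phi$ is surjective. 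It is also injective: the pair $(N_D,\varphi)$ determines the optimal snake sub-multigraph $(D\ominus D_\textup{min})^c$, and the recipe of Proposition~\ref{prop:submod--dimer} — which alters $D_\textup{min}$ only on the components $\mathcal{H}_i$ of $(D\ominus D_\textup{min})^c$, replacing $D_\textup{min}|_{\mathcal{H}_i}$ by $E(\mathcal{H}_i)$ — then recovers $D$ (this is the computation $(D_\textup{min}\ominus D_\varphi)^c=\bigcup_i\mathcal{H}_i$ carried out in the example preceding Proposition~\ref{prop:submod--dimer}). Hence $\Phi$ is a bijection of underlying sets.

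To see that $\Phi$ is an isomorphism of lattices it suffices to show that $\Phi$ and $\Phi^{-1}$ are order-preserving, and for that it is enough to treat cover relations, since both posets are finite. A cover relation in $\mathcal{DD}(\mathcal{G})$ is a single twist of one tile $G_i$ (exchanging its West--East pair of dimers with its South--North pair, or the reverse), while a cover relation in the submodule lattice of $\widetilde M_\mathcal{G}$ is the addition or removal of a single simple top $S_k$, exactly as recalled for the classical string-module lattice. So the task is: \emph{twisting $G_i$ changes $N_D$ by adding or removing the top $S_k$, where $k$ is the face weight of $G_i$.} I would analyse the effect of a twist on $(D\ominus D_\textup{min})^c$ through its decomposition from Proposition~\ref{lemma_decomposition}: the ``cycle part'' $\bigcup_i C_{i,1}$ corresponds to $N_D/N_D\epsilon\cong N_2$ and the ``doubled part'' $\bigcup_i\bigcup_{j\ge 2}C_{i,j}$ corresponds to $N_D\epsilon\cong N_1$ (equivalently to the largest induced submodule $\widetilde{N_D\epsilon}$). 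A twist at a tile interior to a single cycle is exactly a twist in the classical single-dimer lattice $\mathcal{L}(\mathcal{G})$ of the sub-snake-graph it bounds, and by \cite{CS21} it adds or removes a top of the corresponding string submodule; one checks this is precisely a top of $N_2$, hence of $N_D$. A twist at the outermost tile of a cycle, or at the interface between a cycle and a doubled block, instead moves a single simple between $N_D\epsilon$ and $F(N_D)=N_D/\widetilde{N_D\epsilon}$, i.e.\ changes $N_1$ by one simple; using optimality of $(D\ominus D_\textup{min})^c$ (Lemma~\ref{loop-mazimal-zigzag}) together with the minimal-matching convention of Remark~\ref{remark_headofarrow}, this simple is again a top of $N_D$. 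In every case $\underline{\dim}(N_D)$ changes by $\pm e_k$ and $N_D$ gains or loses exactly one top, so cover relations go to cover relations, and symmetrically for $\Phi^{-1}$.

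The main obstacle is precisely the ``interface'' analysis in the second case above: one must show that when a twist passes a tile whose twin boundary edges lie on the border between a doubled block and a cycle of $(D\ominus D_\textup{min})^c$, the simple that is created or destroyed sits genuinely \emph{on top of} $N_D$ rather than elsewhere in its radical layers, and that $F(N_D)$ changes compatibly. This is where optimality of the snake sub-multigraph — which forces doubled blocks to be closed under the relevant maximal zig-zags (Lemma~\ref{loop-mazimal-zigzag}) — and the position of $D_\textup{min}$ relative to arrow heads (Remark~\ref{remark_headofarrow}) do the real work. Everything else is bookkeeping with the face function $h$, the decomposition of Proposition~\ref{lemma_decomposition}, and the classical correspondence of \cite{CS21}.
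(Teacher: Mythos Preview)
Your proposal is correct and follows essentially the same approach as the paper: both establish the set bijection from Propositions~\ref{prop:dimer--submod} and~\ref{prop:submod--dimer} and then verify that cover relations (single-tile twists versus adding/removing one simple top) correspond. The only real difference is organizational: you route the cover-relation analysis through the $N_1\le N_2$ decomposition and Proposition~\ref{lemma_decomposition}, splitting into ``interior of a cycle'' and ``interface'' cases, whereas the paper argues directly on the loopy-word description, distinguishing instead the two local edge-configurations at the twisted tile (double versus single, or complementary singles). The paper's version is a little more streamlined---it avoids having to separately verify that the simple produced at an interface is genuinely a top of $N_D$---but both arguments hinge on the same ingredients (Lemma~\ref{loop-mazimal-zigzag}, Remark~\ref{remark_headofarrow}, and the classical correspondence of \cite{CS21}).
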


 \begin{proof}
By Proposition~\ref{prop:dimer--submod} and Propoposition~\ref{prop:submod--dimer}, we have an equality of the two lattices as sets. Let $H(\mathcal{G})$ and $H(\widetilde{M}_\mathcal{G})$ be the Hasse diagrams of the double dimer cover lattice $\mathcal{L}(\mathcal{G})$ and the canonical submodule lattice $\mathcal{L}(\widetilde{M}_\mathcal{G})$, respectively. We will verify the existing of an edge in one lattice if and only if there is an edge between their images in the other lattice.

Let $D$ and $D'$ be double dimer covers such that there is an edge between those vertices in $H(\mathcal{G})$. Suppose $D\ominus D_\textup{min}=\bigcup\limits_i\mathcal{H}_i$ and $D'\ominus D_\textup{min}=\bigcup\limits_i\mathcal{H}_i'$.
Suppose $N_D,N_{D'}$ are the corresponding submodules in Proposition~\ref{prop:dimer--submod} with canonical embeddings $\varphi: \bigcup\limits_i\mathcal{H}_i \hookrightarrow \mathcal{G}$ and $\varphi': \bigcup\limits_i\mathcal{H}_i' \hookrightarrow \mathcal{G}$, respectively. Note that the dimer covers $D$ and $D'$ only differ on a single tile $G$ where the symmetric difference on those tiles consists of the boundary edges of $G$. This could occur in two different configurations, either when one of them has double edges (on opposite sides in $G$) and the other consists of alternating single edges in $G$ or when both of them consist of single edges complementary to each other (i.e. one consisting of two horizontal and the other of two vertical edges). In the latter, the vertex $v$ corresponding to $G$ is either in the loopy word $w_D$ or in $w_{D'}$ corresponding to $N_D$ and $N_{D'}$, respectively. This implies that $N_D$ and $N_{D'}$ agree everywhere except for the neighbouring of the vertex $v$. In this setting, the argument that $v$ corresponds to a single top is similar to that of \cite{CS21}. For the former configuration, if the double edges agree with $D_\textup{min}$, then we again have the same argument as above. If not, then the double edges are in $D_\textup{max}$ so the corresponding loopy word contains the vertex $v$ with a loop whereas the dimer cover consisting of single edges corresponds to only the vertex $v$ without a loop. Hence the corresponding representations only differ at the vertex $v$, in which one has $K^2$ but the other $K$. These two representations only differ by a top at vertex $v$ and thus there is an edge between them in $H(\widetilde{M}_\mathcal{G})$. Moreover, without loss of generality, assume $\bigcup\limits_i\mathcal{H}_i  \backslash \bigcup\limits_i\mathcal{H}_i'=G$, then $\varphi\mid_{\bigcup\limits_i\mathcal{H}_i'}=\varphi '$ implying that $\varphi\mid_{N_{D'}}=\varphi '$.

Conversely, suppose that $(N_{w_i},\varphi_i)$ and $(N_{w_j},\varphi_j)$ are two canonically embedded submodules of $\widetilde{M}_\mathcal{G}$ connected by an edge in $H(\widetilde{M}_{\mathcal{G}})$. Therefore, without loss of generality, there is exactly one vertex or a loop contained in $w_i$ but not in $w_j$ and all other vertices, arrows, inverse arrows and loops in $w_i$ and $w_j$ are the same. Let $D_{\varphi_i}$ and $D_{\varphi_j}$ be the double dimer covers associated to  $(N_{w_i},\varphi_i)$ and $(N_{w_j},\varphi_j)$, respectively. Consider $D_{\varphi_i}$ and $D_{\varphi_j}$ and observe that  $D_{\varphi_i} \ominus D_{\varphi_j}$ is a single tile $G$ of $\mathcal{G}$ consisting only of single edges as their induced loopy words differ either by a loop or a vertex. Since two dimer covers have an edge in a double dimer cover lattice if any only if their symmetric difference is a single tile, we obtain the desired result. 
\end{proof}

\begin{remark}
    Note that Theorem~\ref{thm_lattice_bijection_double} generalises straightforwardly if we consider $d$-dimer covers of a snake graph and the induced module in $\module \widetilde{\Lambda}= \module (\Lambda\otimes_K K[\epsilon]/(\epsilon^d))$, where $d\in\mathbb{N}$. In a similar fashion, we obtain a bijection between the lattice of $d$-dimer covers of a snake graph $\mathcal{G}$ and the submodule lattice of $\widetilde{M_{\mathcal{G}}}=M \otimes_\Lambda \widetilde{\Lambda}$.  
\end{remark}

For example, Figure~\ref{fig:4dimer} illustrates the bijection between the lattice of 4-dimer covers of $\mathcal{G}={\begin{tikzpicture}[scale=.5]
        \draw (0,0) \rectanglepath;
        \draw (1,0) \rectanglepath;
  \draw (0.5,0.5) node{\tiny $1$};
  \draw (1.5,0.5) node{\tiny $2$};
    \end{tikzpicture}}$ 
    and the submodule lattice of $\widetilde{M_\mathcal{G}}=
    \begin{smallmatrix}
    &&&1&\\&&1&&2\\&1&&2\\1&&2&&\\&2&&& \end{smallmatrix}$.

\tikzset{
    triple/.style args={[#1] in [#2] in [#3]}{
        #1,preaction={preaction={draw,#3},draw,#2}
    }
}
\tikzset{
    quadruple/.style args={[#1] in [#2] in [#3] in [#4]}{
        #1,preaction={preaction={preaction={draw, #4},draw,#3},draw,#2}}}
        
    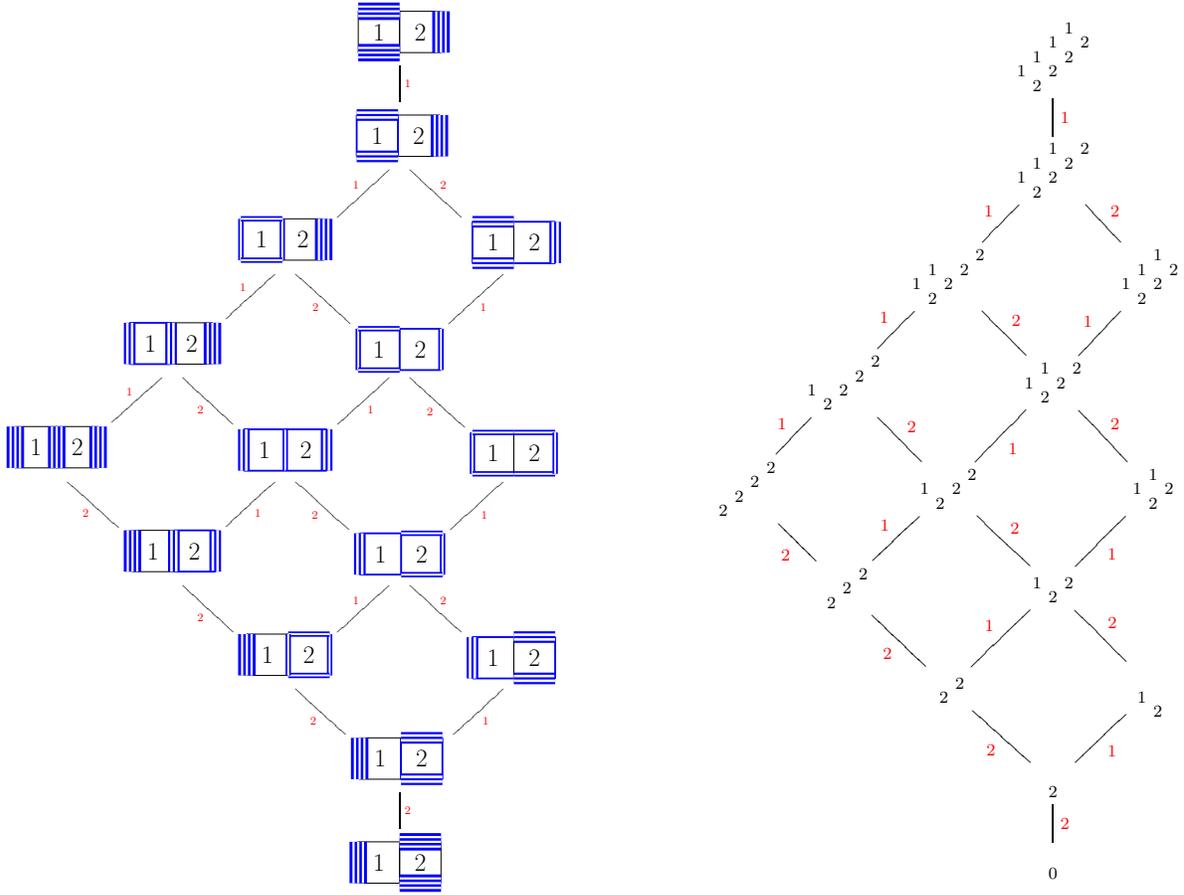
\begin{figure}[h!]
    \centering
\scalebox{0.55}{\xymatrix@C=0.4em{
&&&
{\begin{tikzpicture}
  \draw (0,0) \rectanglepath;
  \draw (1,0) \rectanglepath;
 \draw[quadruple={[line width=1.5pt, white] in
  [line width=5pt,blue] in
      [line width=8pt,white] in
     [line width=12pt,blue]}]
     (0,0)--(1,0);
  \draw[quadruple={[line width=1.5pt, white] in
  [line width=5pt,blue] in
      [line width=8pt,white] in
     [line width=12pt,blue]}] (2,1) to (2,0);
  \draw[quadruple={[line width=1.5pt, white] in
  [line width=5pt,blue] in
      [line width=8pt,white] in
     [line width=12pt,blue]}] (0,1) to (1,1);
 \draw (0.5,0.5) node{\Large $1$};
  \draw (1.5,0.5) node{\Large $2$};
\end{tikzpicture}}
\ar@[red]@{-}[d]^-{\color{red}1}
\\
&&&
{\begin{tikzpicture}
  \draw (0,0) \rectanglepath;
  \draw (1,0) \rectanglepath;
  \draw[very thick, blue] (0,0)--(0,1);
  \draw[very thick, blue] (1,1)--(1,0);
  \draw[quadruple={[line width=1.5pt, white] in
  [line width=5pt,blue] in
      [line width=8pt,white] in
     [line width=12pt,blue]}] (2,1)--(2,0);
  \draw[triple={[line width=1.5pt,blue] in
     [line width=5pt,white] in
    [line width=8pt,blue]}] (1,1)--(0,1);
  \draw[triple={[line width=1.5pt,blue] in
     [line width=5pt,white] in
    [line width=8pt,blue]}] (0,0)--(1,0);
 \draw (0.5,0.5) node{\Large $1$};
  \draw (1.5,0.5) node{\Large $2$};
\end{tikzpicture}}
\ar@[red]@{-}[ld]_-{\color{red}1}\ar@[red]@{-}[rd]^-{\color{red}2}
\\
&&
{\begin{tikzpicture}
  \draw (0,0) \rectanglepath;
  \draw (1,0) \rectanglepath;
  \draw[very thick, double, double distance=1.3pt, blue] (0,0)--(0,1);
  \draw[very thick, double, double distance=1.3pt, blue] (1,0)--(1,1);
  \draw[quadruple={[line width=1.5pt, white] in
  [line width=5pt,blue] in
      [line width=8pt,white] in
     [line width=12pt,blue]}] (2,1)--(2,0);
  \draw[very thick, double, double distance=1.3pt, blue] (1,1)--(0,1);
  \draw[very thick, double, double distance=1.3pt, blue] (0,0)--(1,0);
 \draw (0.5,0.5) node{\Large $1$};
  \draw (1.5,0.5) node{\Large $2$};
\end{tikzpicture}}
\ar@[red]@{-}[rd]_-{\color{red}2} \ar@[red]@{-}[ld]_-{\color{red}1}
&&
{\begin{tikzpicture}
  \draw (0,0) \rectanglepath;
  \draw (1,0) \rectanglepath;
  \draw[very thick, blue] (0,0)--(0,1);
  \draw[very thick, blue] (1,1)--(2,1);
  \draw[very thick, blue] (1,0)--(2,0);
  \draw[triple={[line width=1.5pt,blue] in
     [line width=5pt,white] in
    [line width=8pt,blue]}] (2,1)--(2,0);
  \draw[triple={[line width=1.5pt,blue] in
     [line width=5pt,white] in
    [line width=8pt,blue]}] (1,1)--(0,1);
  \draw[triple={[line width=1.5pt,blue] in
     [line width=5pt,white] in
    [line width=8pt,blue]}] (0,0)--(1,0);
 \draw (0.5,0.5) node{\Large $1$};
  \draw (1.5,0.5) node{\Large $2$};
\end{tikzpicture}}
\ar@[red]@{-}[ld]^-{\color{red}1}
\\
&{\begin{tikzpicture}
  \draw (0,0) \rectanglepath;
  \draw (1,0) \rectanglepath;
  \draw[very thick, blue]  (0,0)--(1,0);
  \draw[very thick, blue]  (0,1)--(1,1);
  \draw[quadruple={[line width=1.5pt, white] in
  [line width=5pt,blue] in
      [line width=8pt,white] in
     [line width=12pt,blue]}] (2,1)--(2,0);
  \draw[triple={[line width=1.5pt,blue] in
     [line width=5pt,white] in
    [line width=8pt,blue]}] (0,0)--(0,1);
  \draw[triple={[line width=1.5pt,blue] in
     [line width=5pt,white] in
    [line width=8pt,blue]}] (1,0)--(1,1);
 \draw (0.5,0.5) node{\Large $1$};
  \draw (1.5,0.5) node{\Large $2$};
\end{tikzpicture}}
\ar@[red]@{-}[rd]_-{\color{red}2} \ar@[red]@{-}[ld]_-{\color{red}1}
&&
{\begin{tikzpicture}
  \draw (0,0) \rectanglepath;
  \draw (1,0) \rectanglepath;
  \draw[very thick, double, double distance=1.3pt, blue] (0,0)--(0,1);
  \draw[very thick, blue] (1,1)--(2,1);
  \draw[very thick, blue] (1,0)--(2,0);
  \draw[very thick, blue] (1,0)--(1,1);
  \draw[very thick, double, double distance=1.3pt, blue] (2,1)--(2,0);
  \draw[very thick, double, double distance=1.3pt, blue] (1,1)--(0,1);
  \draw[very thick, double, double distance=1.3pt, blue] (0,0)--(1,0);
 \draw (0.5,0.5) node{\Large $1$};
  \draw (1.5,0.5) node{\Large $2$};
\end{tikzpicture}}
\ar@[red]@{-}[rd]_-{\color{red}2} \ar@[red]@{-}[ld]^-{\color{red}1}
\\
{\begin{tikzpicture}
  \draw (0,0) \rectanglepath;
  \draw (1,0) \rectanglepath;
  \draw[quadruple={[line width=1.5pt, white] in
  [line width=5pt,blue] in
      [line width=8pt,white] in
     [line width=12pt,blue]}] (2,1)--(2,0);
  \draw[quadruple={[line width=1.5pt, white] in
  [line width=5pt,blue] in
      [line width=8pt,white] in
     [line width=12pt,blue]}] (0,0)--(0,1);
  \draw[quadruple={[line width=1.5pt, white] in
  [line width=5pt,blue] in
      [line width=8pt,white] in
     [line width=12pt,blue]}] (1,0)--(1,1);
 \draw (0.5,0.5) node{\Large $1$};
  \draw (1.5,0.5) node{\Large $2$};
\end{tikzpicture}}
\ar@[red]@{-}[rd]_-{\color{red}2}
&&{\begin{tikzpicture}
  \draw (0,0) \rectanglepath;
  \draw (1,0) \rectanglepath;
  \draw[triple={[line width=1.5pt,blue] in
     [line width=5pt,white] in
    [line width=8pt,blue]}] (2,1)--(2,0);
  \draw[very thick, blue]  (0,1)--(1,1);
 \draw[very thick, blue]   (0,0)--(1,0);
  \draw[very thick, double, double distance=1.3pt, blue]  (1,0)--(1,1);
  \draw[very thick, blue] (1,0)--(2,0);
  \draw[very thick, blue] (1,1)--(2,1);
  \draw[triple={[line width=1.5pt,blue] in
     [line width=5pt,white] in
    [line width=8pt,blue]}]  (0,0)--(0,1);
 \draw (0.5,0.5) node{\Large $1$};
  \draw (1.5,0.5) node{\Large $2$};
\end{tikzpicture}}\ar@[red]@{-}[rd]_-{\color{red}2}\ar@[red]@{-}[ld]^-{\color{red}1}
&&{\begin{tikzpicture}
  \draw (0,0) \rectanglepath;
  \draw (1,0) \rectanglepath;
  \draw[very thick, double, double distance=1.3pt, blue] (0,0)--(0,1);
  \draw[very thick, double, double distance=1.3pt, blue] (1,1)--(2,1);
  \draw[very thick, double, double distance=1.3pt, blue] (1,0)--(2,0);
  \draw[very thick, double, double distance=1.3pt, blue] (2,1)--(2,0);
  \draw[very thick, double, double distance=1.3pt, blue] (1,1)--(0,1);
  \draw[very thick, double, double distance=1.3pt, blue] (0,0)--(1,0);
 \draw (0.5,0.5) node{\Large $1$};
  \draw (1.5,0.5) node{\Large $2$};
\end{tikzpicture}}\ar@[red]@{-}[ld]^-{\color{red}1}
\\
&{\begin{tikzpicture}
  \draw (0,0) \rectanglepath;
  \draw (1,0) \rectanglepath;
  \draw[triple={[line width=1.5pt,blue] in
     [line width=5pt,white] in
    [line width=8pt,blue]}] (2,1)--(2,0);
  \draw[very thick, blue] (1,0)--(2,0);
  \draw[very thick, blue] (1,1)--(2,1);
  \draw[quadruple={[line width=1.5pt, white] in
  [line width=5pt,blue] in
      [line width=8pt,white] in
     [line width=12pt,blue]}] (0,0)--(0,1);
  \draw[triple={[line width=1.5pt,blue] in
     [line width=5pt,white] in
    [line width=8pt,blue]}] (1,0)--(1,1);
 \draw (0.5,0.5) node{\Large $1$};
  \draw (1.5,0.5) node{\Large $2$};
\end{tikzpicture}}
\ar@[red]@{-}[rd]_-{\color{red}2}
&&{\begin{tikzpicture}
  \draw (0,0) \rectanglepath;
  \draw (1,0) \rectanglepath;
  \draw[very thick, double, double distance=1.3pt, blue] (2,1)--(2,0);
  \draw[very thick, blue]  (0,1)--(1,1);
 \draw[very thick, blue]   (0,0)--(1,0);
  \draw[very thick, blue]  (1,0)--(1,1);
  \draw[very thick, double, double distance=1.3pt, blue] (1,0)--(2,0);
  \draw[very thick, double, double distance=1.3pt, blue] (1,1)--(2,1);
  \draw[triple={[line width=1.5pt,blue] in
     [line width=5pt,white] in
    [line width=8pt,blue]}]  (0,0)--(0,1);
 \draw (0.5,0.5) node{\Large $1$};
  \draw (1.5,0.5) node{\Large $2$};
\end{tikzpicture}}
\ar@[red]@{-}[ld]_-{\color{red}1}\ar@[red]@{-}[rd]^-{\color{red}2}
\\
&&{\begin{tikzpicture}
  \draw (0,0) \rectanglepath;
  \draw (1,0) \rectanglepath;
  \draw[very thick, double, double distance=1.3pt, blue] (2,1)--(2,0);
  \draw[very thick, double, double distance=1.3pt, blue] (1,0)--(2,0);
  \draw[very thick, double, double distance=1.3pt, blue] (1,1)--(2,1);
  \draw[quadruple={[line width=1.5pt, white] in
  [line width=5pt,blue] in
      [line width=8pt,white] in
     [line width=12pt,blue]}] (0,0)--(0,1);
  \draw[very thick, double, double distance=1.3pt, blue] (1,0)--(1,1);
 \draw (0.5,0.5) node{\Large $1$};
  \draw (1.5,0.5) node{\Large $2$};
\end{tikzpicture}}
\ar@[red]@{-}[rd]_-{\color{red}2}
&&
{\begin{tikzpicture}
  \draw (0,0) \rectanglepath;
  \draw (1,0) \rectanglepath;
  \draw[very thick, blue]   (0,1)--(1,1);
 \draw[very thick, blue]  (0,0)--(1,0);
  \draw[triple={[line width=1.5pt,blue] in
     [line width=5pt,white] in
    [line width=8pt,blue]}]  (0,0)--(0,1);
    \draw[triple={[line width=1.5pt,blue] in
     [line width=5pt,white] in
    [line width=8pt,blue]}]  (1,0)--(2,0);
  \draw[very thick, blue]  (2,0)--(2,1);
    \draw[triple={[line width=1.5pt,blue] in
     [line width=5pt,white] in
    [line width=8pt,blue]}]  (2,1)--(1,1); 
 \draw (0.5,0.5) node{\Large $1$};
  \draw (1.5,0.5) node{\Large $2$};
\end{tikzpicture}}
\ar@[red]@{-}[ld]^-{\color{red}1}
\\
&&&{\begin{tikzpicture}
  \draw (0,0) \rectanglepath;
  \draw (1,0) \rectanglepath;
  \draw[quadruple={[line width=1.5pt, white] in
  [line width=5pt,blue] in
      [line width=8pt,white] in
     [line width=12pt,blue]}] (0,0)--(0,1);
   \draw[very thick, blue]  (1,0)--(1,1);
    \draw[very thick, blue]  (2,0)--(2,1);
  \draw[triple={[line width=1.5pt,blue] in
     [line width=5pt,white] in
    [line width=8pt,blue]}]  (2,0)--(1,0);
    \draw[triple={[line width=1.5pt,blue] in
     [line width=5pt,white] in
    [line width=8pt,blue]}] (2,1)--(1,1);
 \draw (0.5,0.5) node{\Large $1$};
  \draw (1.5,0.5) node{\Large $2$};
\end{tikzpicture}}
\ar@[red]@{-}[d]^-{\color{red}2}
\\
&&&{\begin{tikzpicture}
  \draw (0,0) \rectanglepath;
  \draw (1,0) \rectanglepath;
 \draw[quadruple={[line width=1.5pt, white] in
  [line width=5pt,blue] in
      [line width=8pt,white] in
     [line width=12pt,blue]}]
     (0,0)--(0,1);
  \draw[quadruple={[line width=1.5pt, white] in
  [line width=5pt,blue] in
      [line width=8pt,white] in
     [line width=12pt,blue]}] (1,0) to (2,0);
  \draw[quadruple={[line width=1.5pt, white] in
  [line width=5pt,blue] in
      [line width=8pt,white] in
     [line width=12pt,blue]}] (1,1) to (2,1);
 \draw (0.5,0.5) node{\Large $1$};
  \draw (1.5,0.5) node{\Large $2$};
\end{tikzpicture}}
}
}
\qquad \qquad 
\scalebox{0.8}{\xymatrix@C=0.4em@R=1.5em{
&&&
{\begin{smallmatrix}
    &&&1\\&&1&&2\\&1&&2\\1&&2\\&2
\end{smallmatrix}}
\ar@[red]@{-}[d]^-{\color{red}1}
\\
&&&
{\begin{smallmatrix}
    &&1&&2\\&1&&2\\1&&2\\&2
\end{smallmatrix}}
\ar@[red]@{-}[rd]^-{\color{red}2}\ar@[red]@{-}[ld]_-{\color{red}1}
\\
&&
{\begin{smallmatrix}
    &&&&2\\&1&&2\\1&&2\\&2
\end{smallmatrix}}
\ar@[red]@{-}[rd]^-{\color{red}2}\ar@[red]@{-}[ld]_-{\color{red}1}
&&
{\begin{smallmatrix}
    &&1&&\\&1&&2\\1&&2\\&2
\end{smallmatrix}}
\ar@[red]@{-}[ld]_-{\color{red}1}
\\
&
{\begin{smallmatrix}
    &&&&2\\&&&2\\1&&2\\&2
\end{smallmatrix}}
\ar@[red]@{-}[rd]^-{\color{red}2}\ar@[red]@{-}[ld]_-{\color{red}1}
&&{\begin{smallmatrix}
    &1&&2\\1&&2\\&2
\end{smallmatrix}}\ar@[red]@{-}[ld]^-{\color{red}1}\ar@[red]@{-}[rd]^-{\color{red}2}
\\
{\begin{smallmatrix}
    &&&&2\\&&&2\\&&2\\&2
\end{smallmatrix}}
\ar@[red]@{-}[rd]_-{\color{red}2}
&&{\begin{smallmatrix}
    &&&2\\1&&2\\&2
\end{smallmatrix}}
\ar@[red]@{-}[rd]^-{\color{red}2}\ar@[red]@{-}[ld]_-{\color{red}1}
&&{\begin{smallmatrix}
    &1&\\1&&2\\&2
\end{smallmatrix}}\ar@[red]@{-}[ld]^-{\color{red}1}
\\
&{\begin{smallmatrix}
    &&&2\\&&2\\&2
\end{smallmatrix}}
\ar@[red]@{-}[rd]_-{\color{red}2}
&&{\begin{smallmatrix}
    &&\\1&&2\\&2
\end{smallmatrix}}
\ar@[red]@{-}[ld]_-{\color{red}1}\ar@[red]@{-}[rd]^-{\color{red}2}
\\
&&{\begin{smallmatrix}
    &&\\&&2\\&2
\end{smallmatrix}}
\ar@[red]@{-}[rd]_-{\color{red}2}
&&
{\begin{smallmatrix}
    \\\\&&\\1&&\\&2
\end{smallmatrix}}
\ar@[red]@{-}[ld]^-{\color{red}1}
\\
&&&{\begin{smallmatrix}
    &&\\&&\\&2
\end{smallmatrix}}
\ar@[red]@{-}[d]^-{\color{red}2}
\\
&&&{\begin{smallmatrix}
    &&\\&&\\&0
\end{smallmatrix}}
}
}
    \caption{An example of the lattice bijection for $d=4$.}
    \label{fig:4dimer}
\end{figure}

Our lattice bijection between $d$-dimer covers and submodules of the corresponding induced module induces a formula for the number of $d$-dimer covers in terms of the number of submodules of the corresponding induced module. Comparing this with the enumerative results from \cite{musiker2023}, we may relate the number of submodules of an induced module with matrix products or the higher continued fractions in the sense of \cite{musiker2023}. 

    \begin{corollary} Let $\mathcal{G}$ be snake graph and $\widetilde{M_\mathcal{G}}$ be the corresponding induced module as above.  
        \begin{enumerate}
            \item The number of $d$-dimer covers of $\mathcal{G}$ is equal to the number of submodules of $\widetilde{M_{\mathcal{G}}}$.
            \item The number of submodules of $\widetilde{M_{\mathcal{G}}}$ may be computed by the top left entry of the matrix product given in \cite[Theorem 1.1]{musiker2023}.
            \item The number of submodules of $\widetilde{M_{\mathcal{G}}}$ may be computed by the numerator of the continued fraction given in \cite[Theorem 1.3]{musiker2023}.
        \end{enumerate}
    \end{corollary}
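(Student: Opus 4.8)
The plan is to derive all three parts from the lattice bijection of Theorem~\ref{thm_lattice_bijection_double}, combined with the enumerative results of \cite{musiker2023}; the only part requiring genuine work is (1), and even there the work has essentially been done, since it is the $d$-fold version of Theorem~\ref{thm_lattice_bijection_double} recorded in the remark following it.

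For (1), I would first spell out the $d$-analogue of the constructions of Section~\ref{Sec:latticebij}: a $d$-dimer cover $D$ of $\mathcal{G}$ (each vertex incident to exactly $d$ edges) decomposes, via $D \ominus D_\textup{min}$, into enclosed snake multigraphs whose completions are ``good'' snake multigraphs now allowing edge multiplicities up to $d$; correspondingly the loopy-string formalism is replaced by words whose vertices may carry several loops and whose associated representations have local vector spaces up to $K^{d}$, living in $\module(\Lambda\otimes_K K[\epsilon]/(\epsilon^d))$. The maps $D\mapsto N_D$ and $N\mapsto D_\varphi$ of Propositions~\ref{prop:dimer--submod} and~\ref{prop:submod--dimer} then go through verbatim, giving mutually inverse bijections between the $d$-dimer covers of $\mathcal{G}$ and the submodules of $\widetilde{M_{\mathcal{G}}}= M\otimes_{\Lambda}(\Lambda\otimes_K K[\epsilon]/(\epsilon^d))$. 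Since a bijection of lattices is in particular a bijection of the underlying sets, the two cardinalities coincide, which is exactly (1).

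For (2) and (3), I would simply substitute. By \cite[Theorem 1.1]{musiker2023}, the number of $d$-dimer covers of the snake graph $\mathcal{G}$ is the top-left entry of the relevant product of transfer matrices indexed by the tiles of $\mathcal{G}$; by \cite[Theorem 1.3]{musiker2023} it equals the numerator of the associated higher continued fraction. Replacing ``number of $d$-dimer covers of $\mathcal{G}$'' by ``number of submodules of $\widetilde{M_{\mathcal{G}}}$'' by means of (1) yields (2) and (3) respectively.

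I expect the main obstacle to be purely one of dictionary alignment: one must check that the snake graph $\mathcal{G}$ appearing in \cite{musiker2023} — with its prescribed sequence of North/East gluings and with the face weights forgotten or set to $1$ — is literally the abstract snake graph attached to $\widetilde{M_{\mathcal{G}}}$ here, and that the notion of $d$-dimer cover used there agrees with the one in the remark after Theorem~\ref{thm_lattice_bijection_double}. Once these conventions are matched, (2) and (3) are immediate, and the entire substance of the corollary reduces to the $d$-fold generalisation of Theorem~\ref{thm_lattice_bijection_double}.
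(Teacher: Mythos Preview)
Your proposal is correct and matches the paper's approach exactly: the paper also derives the corollary immediately from the $d$-fold generalisation of Theorem~\ref{thm_lattice_bijection_double} (recorded in the remark after it) together with the enumerative results of \cite{musiker2023}, without giving a separate proof. Your remark about checking the dictionary alignment with \cite{musiker2023} is a reasonable caveat, but the paper treats this as routine and so can you.
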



\section{``Super" Caldero-Chapoton map for type $A$}\label{Sec:superCC}

Putting together the results proved in the previous sections, we now construct a super analogue of the Caldero-Chapoton map for type $A$ and prove it agrees with Musiker, Ovenhouse and Zhang's formula for computing super lambda lengths.
We first present some general results for $\widetilde{\Lambda}=\Lambda\otimes_K K[\epsilon]/(\epsilon^2)$, and then specialise to the case when $\Lambda$ is of type $A$ and prove the main result.

Let $\widetilde{M_\gamma}$ be an indecomposable induced module in mod$\,\widetilde{\Lambda}$ and $\mathcal{G}_\gamma$ its corresponding snake graph.  Denote the minimal double dimer cover by $D_{\textup{min}}$ and the minimal (single) dimer cover of $\mathcal{G}_\gamma$ by $P_{\textup{min}}$.

\begin{lemma}\label{lemma_index_double}
   With the notation above, we have that
   \begin{align*}
       \dfrac{\textup{wt}_2 (D_{\textup{min}})}{\textup{cross}(\gamma)}=X^{\textup{ind}_{\widetilde{\Lambda}}(\widetilde{M_\gamma})}.
   \end{align*}
\end{lemma}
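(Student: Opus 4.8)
The plan is to reduce this identity entirely to the classical comparison between the snake graph formula and the CC-map recorded in Remark~\ref{remark_BZ_hidden}, together with the invariance of the index under induction proved in Lemma~\ref{lem:index}.

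First I would unpack the left-hand side. By definition the minimal double dimer cover $D_{\textup{min}}$ consists only of double edges, namely two copies of every (boundary) edge appearing in the minimal single dimer cover $P_{\textup{min}}$; in particular it contains no single dimers, so $c(D_{\textup{min}})=\emptyset$ and no $\mu$-invariants occur in $\textup{wt}_2(D_{\textup{min}})$. Since the weight of a dimer is the square root of the corresponding edge weight (Definition~\ref{defn_weight_MOZ}), each edge $e\in P_{\textup{min}}$ contributes $(\sqrt{x_e})^2=x_e$, so that
\begin{align*}
\textup{wt}_2(D_{\textup{min}})=\prod_{e\in P_{\textup{min}}}x_e=\textup{wt}(P_{\textup{min}}).
\end{align*}
Moreover $\textup{cross}(\gamma)$ is the product of the face weights of $\mathcal{G}_\gamma$, which is literally the same quantity in the classical and the super settings.

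Next I would invoke Remark~\ref{remark_BZ_hidden}: since $P_{\textup{min}}$ corresponds to the zero submodule $N=0$ of $M_\gamma$ under the Br\"ustle--Zhang correspondence, one has
\begin{align*}
\dfrac{\textup{wt}(P_{\textup{min}})}{\textup{cross}(\gamma)}=X^{\textup{ind}_{\Lambda}(M_\gamma)},
\end{align*}
where $\Lambda=A_T$. Finally, Lemma~\ref{lem:index} gives $\textup{ind}_{\Lambda}(M_\gamma)=\textup{ind}_{\widetilde{\Lambda}}(\widetilde{M_\gamma})$, and combining the three displayed equalities yields the claim.

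I do not expect a serious obstacle here; the lemma is essentially a bookkeeping statement. The only point requiring genuine care is the first step: confirming that $D_{\textup{min}}$ is exactly $P_{\textup{min}}$ with each edge doubled, that it bounds no cycles (so that the $\mu$-invariant factors are all trivial), and that the square-root conventions of Definition~\ref{defn_weight_MOZ} combine to give precisely $\textup{wt}(P_{\textup{min}})$ rather than, say, its square root.
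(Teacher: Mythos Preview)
Your proposal is correct and follows essentially the same approach as the paper: both reduce $\textup{wt}_2(D_{\textup{min}})$ to $\textup{wt}(P_{\textup{min}})$ (the paper writes this as $\sqrt{\textup{wt}(P_{\textup{min}})}\sqrt{\textup{wt}(P_{\textup{min}})}$), then invoke Remark~\ref{remark_BZ_hidden} for the classical identity and Lemma~\ref{lem:index} for the equality of indices. Your version simply spells out more carefully why $D_{\textup{min}}$ has no cycles and hence no $\mu$-invariant contributions.
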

\begin{proof}
    By Lemma \ref{lem:index}, we have that $\textup{ind}_{\widetilde{\Lambda}}(\widetilde{M_\gamma})=\textup{ind}_{\Lambda}(M_\gamma)$. Moreover, we have that
    \begin{align*}
        \textup{wt}_2(D_{\textup{min}})=\sqrt{\textup{wt}(P_{\textup{min}})}\sqrt{\textup{wt}(P_{\textup{min}})}= \textup{wt}(P_{\textup{min}}).
    \end{align*}
    The result then follows from Remark \ref{remark_BZ_hidden}.
\end{proof}

Recall that by Lemma \ref{lemma_bilinear_equal}, we have that  $\langle S_i , \oplus_j S_j^{m_j}\rangle_{\widetilde{\Lambda}}=\langle S_i , \oplus_j S_j^{m_j}\rangle_{\Lambda}$, hence we may omit the algebra over which the bilinear form is taken.
 Moreover, note that since $D_{\textup{min}}$ consists only of double edges, then it has no cycles and $\textup{wt}_2 (D_{\textup{min}})$ is given by the edge component. This is not true for a general double dimer cover, as it may contain cycles. In the following lemma, we only work with the edge component of the weight of $D_N$. We will study the cycle component in a subsequent result.

\begin{lemma}\label{lemma_weight_edge_component}
    With the notation above, let $N$ be a submodule of $\widetilde{M_\gamma}$  and $D_N$ be the double dimer cover associated to $N$. We have that
  \begin{align*}
        \dfrac{ \prod\limits_{a \in D_N} \sqrt{x_a}}{\textup{wt}_2(D_{\textup{min}})}= \prod\limits_{i=1}^n \sqrt{x_i}^{\langle S_i , \oplus_j S_j^{m_j}\rangle},
    \end{align*}
    where by $a\in D_N$ we mean that $a$ is an edge in the double dimer $D_N$ (if it is a double edge, it is taken twice) and $\underline{\textup{dim}}(N)=\underline{\textup{dim}}(\oplus_j S_j^{m_j})$.
\end{lemma}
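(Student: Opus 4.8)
The plan is to reduce the claimed identity over $\widetilde{\Lambda}$ to the classical statement over $\Lambda$ recalled in Remark~\ref{remark_BZ_hidden}, using the lattice bijection of Theorem~\ref{thm_lattice_bijection_double} together with the decomposition of $D_N$ described in Proposition~\ref{lemma_decomposition}. Write $D_N\ominus D_\textup{min}=\bigcup_i C_i$, and decompose each connected component $C_i$ into $C_{i,1}$ (a single copy of its boundary edges) and the remaining single edges $C_{i,2},\dots,C_{i,m_i}$, as in Proposition~\ref{lemma_decomposition}. By the last paragraph of the example following that proposition, $\bigcup_i C_{i,1}$ corresponds to the $\Lambda$-module $N/N\epsilon$ and $\bigcup_i\bigcup_{j\ge 2}C_{i,j}$ corresponds to $N\epsilon$; recall also $N\epsilon\cong N/N\epsilon$ as $\Lambda$-modules and $\underline{\dim}(N)=\underline{\dim}(N\epsilon)+\underline{\dim}(N/N\epsilon)$, so in particular $\langle S_i,N\rangle=2\langle S_i,N/N\epsilon\rangle$ when $\langle-,-\rangle$ is applied to $\underline{\dim}(N)$ (here we use that $\langle S_i,-\rangle$ is additive on dimension vectors).

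Next I would translate the left-hand side into single-dimer language. Every edge weight in $\textup{wt}_2$ is a square root of the corresponding edge weight of $\mathcal{G}_\gamma$, so $\prod_{a\in D_N}\sqrt{x_a}=\prod_{a\in D_N}\textup{wt}_2(a)$ is exactly the edge component of $\textup{wt}_2(D_N)$. Using Remark~\ref{remark_superimpose_single_dimers}, $D_N$ is obtained by superimposing two single dimer covers $P$ and $P'$ of $\mathcal{G}_\gamma$: the double edges of $D_N$ contribute a copy to each of $P,P'$, and each cycle of $D_N$ contributes its minimal boundary cover to $P$ and its maximal one to $P'$. Hence the edge component of $\textup{wt}_2(D_N)$ equals $\sqrt{\textup{wt}(P)\,\textup{wt}(P')}$. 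Now $P$ differs from $P_\textup{min}$ exactly on the tiles enclosed by the cycles, where $P$ carries the \emph{maximal} boundary cover of the enclosed sub-snake-graph; dually $P'$ carries the \emph{minimal} one. Comparing with the single-dimer lattice bijection of \cite{CS21}, $P$ is the dimer cover corresponding to the submodule $N/N\epsilon$ of $M_\gamma$ (the cycle edges of $D_N$ encode exactly the tiles/tops removed to pass from $M_\gamma$ to $N/N\epsilon$), while $P'$ corresponds to the complementary submodule, i.e. the one with dimension vector $\underline{\dim}(M_\gamma)-\underline{\dim}(N/N\epsilon)$; in the notation of Remark~\ref{remark_BZ_hidden} these are related to $P_\textup{min}$ and $P_\textup{max}$ respectively.

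From here the computation is a routine application of Remark~\ref{remark_BZ_hidden}. Dividing the edge component $\sqrt{\textup{wt}(P)\textup{wt}(P')}$ by $\textup{wt}_2(D_\textup{min})=\textup{wt}(P_\textup{min})$ (Lemma~\ref{lemma_index_double}) and using $\textup{wt}(P_N)/\textup{wt}(P_\textup{min})=\prod_i x_i^{\langle S_i,N_P\rangle}$, where $N_P$ is the submodule of $M_\gamma$ matched to $P$ by \cite{CS21}, gives
\[
\dfrac{\sqrt{\textup{wt}(P)\,\textup{wt}(P')}}{\textup{wt}(P_\textup{min})}
=\sqrt{\prod_{i=1}^n x_i^{\langle S_i, N/N\epsilon\rangle}}\;\sqrt{\prod_{i=1}^n x_i^{\langle S_i,\, \underline{\dim}(M_\gamma)-\underline{\dim}(N/N\epsilon)\rangle}}\,,
\]
but $\langle S_i,\underline{\dim}(M_\gamma)\rangle$ contributes the factor $X^{\textup{ind}}$-correction that is already absorbed into $\textup{wt}(P_\textup{min})/\textup{cross}(\gamma)$ (Lemma~\ref{lemma_index_double} and Remark~\ref{remark_BZ_hidden}); after that cancellation the two square roots combine to $\prod_i x_i^{\langle S_i, N/N\epsilon\rangle}=\prod_i \sqrt{x_i}^{\,2\langle S_i,N/N\epsilon\rangle}=\prod_i\sqrt{x_i}^{\,\langle S_i,\oplus_j S_j^{m_j}\rangle}$, using $\underline{\dim}(N)=2\,\underline{\dim}(N/N\epsilon)$. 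I expect the main obstacle to be precisely the bookkeeping in this last step: one must carefully keep track of which boundary contributions are cancelled by $\textup{cross}(\gamma)$ versus by $\textup{wt}_2(D_\textup{min})$, and verify that the two single dimer covers $P,P'$ produced from $D_N$ really are the ones matched by \cite{CS21} to $N/N\epsilon$ and its complement — i.e. that superimposing the \cite{CS21}-dimers for a submodule and for its ``doubled'' partner reproduces exactly $D_N$. This is where Theorem~\ref{thm_lattice_bijection_double} and Proposition~\ref{lemma_decomposition} do the real work, so I would state the needed compatibility as a short lemma and check it on the component decomposition $\bigcup_i C_i$ before assembling the global identity.
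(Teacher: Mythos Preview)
Your approach has a genuine error that breaks the argument. You assert that ``$N\epsilon\cong N/N\epsilon$ as $\Lambda$-modules'' and hence $\underline{\dim}(N)=2\,\underline{\dim}(N/N\epsilon)$. But the paper states this isomorphism \emph{only for induced modules}; for an arbitrary submodule $N\subseteq \widetilde{M_\gamma}$ one has $N=\{n_1\otimes 1+n_2\otimes\epsilon:n_1\in N_1,\ n_2\in N_2\}$ with $N_1\le N_2\le M_\gamma$, so $N\epsilon\cong N_1$ while $N/N\epsilon$ has dimension vector $\underline{\dim}(N_2)$, and generically $N_1\not\cong N_2$. Example~\ref{example_FN} already shows this: there $N\epsilon=2$ while $N/N\epsilon=\begin{smallmatrix}1\\2\end{smallmatrix}$, so $\underline{\dim}(N)=(1,2)\neq 2\,\underline{\dim}(N/N\epsilon)=(2,2)$. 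Your final step ``$\prod_i\sqrt{x_i}^{\,2\langle S_i,N/N\epsilon\rangle}=\prod_i\sqrt{x_i}^{\,\langle S_i,\oplus_j S_j^{m_j}\rangle}$'' therefore fails.

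Relatedly, the single dimer covers $P,P'$ produced from $D_N$ by Remark~\ref{remark_superimpose_single_dimers} do \emph{not} correspond to $N/N\epsilon$ and a ``complementary'' submodule of $M_\gamma$. They correspond, under the classical bijection of \cite{CS21}, to the pair $N_1\le N_2$ above (one gets the minimal matching on each cycle, the other the maximal one), and the correct identity is $\underline{\dim}(N)=\underline{\dim}(N_1)+\underline{\dim}(N_2)$. The paper's proof simply uses this: write $\prod_{a\in D_N}\sqrt{x_a}=\sqrt{\mathrm{wt}(P_{N_1})}\sqrt{\mathrm{wt}(P_{N_2})}$ and $\mathrm{wt}_2(D_{\min})=\mathrm{wt}(P_{\min})$, then apply Remark~\ref{remark_BZ_hidden} to each factor $\sqrt{\mathrm{wt}(P_{N_k})/\mathrm{wt}(P_{\min})}=\prod_i\sqrt{x_i}^{\,\langle S_i,N_k\rangle}$ and multiply. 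No ``complementary submodule'', no $\langle S_i,\underline{\dim}(M_\gamma)\rangle$ correction, and no appeal to Proposition~\ref{lemma_decomposition} is needed.
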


\begin{proof}
    By Remark~\ref{remark_superimpose_single_dimers}, the double dimer cover $D_N$ can be obtained by superimposing two dimer covers of $\mathcal{G}$, say $P_{N_1}$ and $P_{N_2}$. Recall that $N_1$ and $N_2$ correspond to submodules $N_1$ and $N_2$ of the module $M_\gamma$ in mod$\,\Lambda$. Then, following the definitions, it is easy to see that
    \begin{align*}
         \prod\limits_{a \in D_N} \sqrt{x_a}=\sqrt{\textup{wt}(P_{N_1})} \sqrt{\textup{wt}(P_{N_2})}.
    \end{align*}
    Similarly, $D_{\textup{min}}$ can be obtained by superimposing two copies of the minimal (single) dimer cover $P_{\textup{min}}$ of $\mathcal{G}_\gamma$ and
    \begin{align*}
        \textup{wt}_2(D_{\textup{min}})=\sqrt{\textup{wt}(P_{\textup{min}})}\sqrt{\textup{wt}(P_{\textup{min}})}.
    \end{align*}
    Hence
    \begin{align*}
        \dfrac{ \prod\limits_{a \in D_N} \sqrt{x_a}}{\textup{wt}_2(D_{\textup{min}})}= \dfrac{\sqrt{\textup{wt}(P_{N_1})} \sqrt{\textup{wt}(P_{N_2})}}{\sqrt{\textup{wt}(P_{\textup{min}})}\sqrt{\textup{wt}(P_{\textup{min}})}}.
    \end{align*}

    Using the definition of symmetric difference and properties of union, intersection and difference of multisets, it is easy to see that $D_N\ominus D_{\textup{min}}$ is obtained superimposing $P_{N_1}\ominus P_{\textup{min}}$ and $P_{N_2}\ominus P_{\textup{min}}$. That is, the face weight of the first is equal to the sum of the face weights of the other two. Then, by Proposition~\ref{prop:dimer--submod} and the corresponding classic one, see \cite[Proposition 3.1]{CS21}, we conclude that $\underline{\textup{dim}}(N)=\underline{\textup{dim}}(N_1)+\underline{\textup{dim}}(N_2)$. The result then follows by Remark~\ref{remark_BZ_hidden}.
    \end{proof}

From now on, we focus on Jacobian algebras of type $A$, that is coming from triangulations of polygons, where we further assume triangulations do not have internal triangles.
We first prove a lemma about the Euler-Poincare characteristic. 

\begin{lemma} \label{lemma2:grassmannians computation equal 1} 
Let $\Ltil=\Lambda\otimes_K K[\epsilon]/(\epsilon^2)$ where $\Lambda$ is a Jacobian algebra coming from a triangulation (with no internal triangles) of a polygon.
Let $M$ be an indecomposable representation over $\Lambda$ and $M\otimes_\Lambda \Ltil = \widetilde{M}$ be the corresponding induced representation over $\Ltil$.    
Then for each nonempty submodule Grassmannian $\mathrm{Gr}_{\mathbf{e}} (\widetilde{M})$ the \emph{Euler-Poincar\'e characteristic} $\chi ( \mathrm{Gr}_{\mathbf{e}} (\widetilde{M}) )$ is 1. 
\end{lemma}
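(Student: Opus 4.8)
The plan is to reduce the computation of $\chi(\mathrm{Gr}_{\mathbf{e}}(\widetilde{M}))$ to a statement about the lattice of submodules, using the fact that for type $A$ everything is combinatorially rigid. First I would invoke Theorem~\ref{thm_lattice_bijection_double}: the submodules of $\widetilde{M}$ of a fixed dimension vector $\mathbf{e}$ are in bijection with the double dimer covers $D$ of $\mathcal{G}_M$ whose face function $h((D\ominus D_\textup{min})^c)$ equals $\mathbf{e}$. In the classical (non-super) type $A$ situation, the analogous fact is that each $\mathrm{Gr}_{\mathbf{e}}(M)$ for $M$ a string module over a gentle algebra of type $A$ is either empty or a point, because the submodules of $M$ of a given dimension vector are uniquely determined (string modules of type $A$ have multiplicity-free socle/top filtrations in the relevant sense, and the quiver Grassmannians of such modules are known to be isolated points — see e.g.\ the rigidity of string modules for Dynkin $A$). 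So the heart of the argument is to show the same uniqueness persists after tensoring with the dual numbers.

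The key steps, in order, would be: (1) Recall from Section~\ref{Sec:tensoring} that any submodule $N$ of $\widetilde{M}$ has the form $N = \{n_1\otimes 1 + n_2\otimes\epsilon \mid n_1\in N_1,\ n_2\in N_2\}$ for a uniquely determined pair of submodules $N_1\le N_2$ of $M$; this was established right before the definition of $F(N)$. (2) Observe that $\underline{\dim}(N) = \underline{\dim}(N_1) + \underline{\dim}(N_2)$, so fixing $\mathbf{e} = \underline{\dim}(N)$ constrains the pair $(\underline{\dim} N_1, \underline{\dim} N_2)$ to finitely many possibilities. (3) For each such pair of dimension vectors $(\mathbf{e}_1, \mathbf{e}_2)$ with $\mathbf{e}_1 + \mathbf{e}_2 = \mathbf{e}$, the set of submodules $N_1$ of $M$ with $\underline{\dim} N_1 = \mathbf{e}_1$ is either empty or a single point (by type $A$ rigidity of the string module $M$ over $\Lambda$, i.e.\ $\chi(\mathrm{Gr}_{\mathbf{e}_1}(M))\in\{0,1\}$ and in fact $\mathrm{Gr}_{\mathbf{e}_1}(M)$ is a point when nonempty), and likewise for $N_2$ — but with the extra constraint $N_1\le N_2$. (4) Conclude that $\mathrm{Gr}_{\mathbf{e}}(\widetilde{M})$ decomposes as a finite disjoint union, indexed by admissible pairs $(\mathbf{e}_1,\mathbf{e}_2)$, of schemes each of which is either empty or a single reduced point; hence $\chi(\mathrm{Gr}_{\mathbf{e}}(\widetilde{M}))$ equals the number of admissible pairs. (5) Finally, show this count is exactly $1$: among all decompositions $\mathbf{e} = \mathbf{e}_1 + \mathbf{e}_2$ there is a unique one realized by a nested pair $N_1\le N_2$ in $M$. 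This last uniqueness should follow from the lattice bijection — a double dimer cover $D$ of $\mathcal{G}_M$ with $h((D\ominus D_\textup{min})^c) = \mathbf{e}$ is determined by $\mathbf{e}$ because the submodule lattice of $\widetilde{M}$ is distributive / the Hasse structure is rigid in type $A$, so there is a unique submodule of each dimension vector.

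The main obstacle I expect is step (5), pinning down that the admissible nested pair $(N_1, N_2)$ — equivalently the double dimer cover of the prescribed face function — is genuinely unique, rather than just finite in number. One clean way to handle this: argue directly on the snake graph side. A double dimer cover $D$ with $(D\ominus D_\textup{min})^c = \bigcup\mathcal{H}_i$ having a prescribed face function $\mathbf{e}$ must have its connected components $\mathcal{H}_i$ forced, because in a type $A$ snake graph (a genuine snake graph, each tile attached North or East) the enclosed regions that a given face-weight-count can produce are uniquely determined — there is no freedom to enclose the "same" multiset of tiles in two different ways since the tiles are linearly ordered and the optimality condition of Lemma~\ref{loop-mazimal-zigzag} rigidifies which tiles get doubled edges. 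Alternatively, one can cite directly that the canonical submodule lattice $\mathcal{L}(\widetilde{M}_\mathcal{G})$ is a distributive lattice (it is the lattice of order ideals of a poset, inherited from the classical string module case via the $\epsilon$-filtration), and in a distributive lattice arising this way the dimension vector determines the element. A secondary, more technical point to verify carefully is that the scheme structure on each stratum is reduced, so that the Euler characteristic really counts points with multiplicity one; this follows because the relevant quiver Grassmannian strata for string-type modules over these special biserial algebras are smooth (indeed points), a fact that can be extracted from the explicit coordinates on $\mathrm{Gr}_{\mathbf{e}}(\widetilde{M})$ coming from the description $N = N_1\otimes 1 \oplus (\text{shift of } N_1)$ plus a choice inside $N_2/N_1$, which contributes no positive-dimensional moduli in type $A$.
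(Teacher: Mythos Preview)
Your approach is correct, but it is considerably more circuitous than the paper's, and the step you flag as the ``main obstacle'' is in fact immediate once you notice a simple constraint.

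The paper argues by direct linear algebra on $\widetilde{M}$ itself, without invoking the lattice bijection or the pair $(N_1,N_2)$. Since $\Lambda$ is type $A$ with no internal triangles, every indecomposable $M$ has $\dim M_i\in\{0,1\}$, so $V:=\widetilde{M}$ has $V_i\in\{0,K^2\}$ with the loop acting by $V_{\epsilon_i}=\begin{bsmallmatrix}0&0\\1&0\end{bsmallmatrix}$. For any subrepresentation $W$ with $\dim W_i=1$, invariance under $\epsilon_i$ forces $W_i=\overline{(0,1)}$; the cases $\dim W_i\in\{0,2\}$ are obviously rigid. Hence every nonempty $\mathrm{Gr}_{\mathbf{e}}(\widetilde{M})$ is a product of points, and $\chi=1$. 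That is the entire proof.

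Your route via nested pairs $N_1\le N_2\le M$ also works, but step~(5) does not require the double dimer machinery or distributivity. Since $M$ has entries in $\{0,1\}$, each entry of $\mathbf{e}=\underline{\dim}N_1+\underline{\dim}N_2$ lies in $\{0,1,2\}$; the nesting $N_1\le N_2$ gives $\underline{\dim}N_1\le\underline{\dim}N_2$ coordinatewise, so an entry $\mathbf{e}_i=2$ forces both to be $1$, $\mathbf{e}_i=0$ forces both to be $0$, and $\mathbf{e}_i=1$ forces $(\underline{\dim}N_1)_i=0$, $(\underline{\dim}N_2)_i=1$. Thus $(\underline{\dim}N_1,\underline{\dim}N_2)$ is determined by $\mathbf{e}$, and type $A$ rigidity of $M$ then pins down $(N_1,N_2)$. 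The appeal to Theorem~\ref{thm_lattice_bijection_double}, optimal snake sub-multigraphs, and scheme-theoretic smoothness is unnecessary for this lemma.
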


\begin{proof}
    Recall that the quiver of $\Ltil$ is obtained from the quiver $Q$ of $\Lambda$ by adding loops $\epsilon_i$ for each vertex together with the relations $\epsilon_i\alpha = \alpha\epsilon_i$ for every arrow $\alpha$.  Since $\Lambda$ is a Jacobian algebra coming from a triangulation of a polygon, then every indecomposable representation $M$ of $\Lambda$ is at most one dimensional at every vertex.  Thus, the induced representation $V:=M\otimes_\Lambda \Ltil$ satisfies the following conditions: 
\begin{enumerate}
    \item All $V_i$ are $K$ vector spaces of dimension 0 or 2.
    \item The matrices are $V_{\epsilon_i} = \begin{bsmallmatrix} 0 & 0 \\  1 &  0\end{bsmallmatrix}$
    for all $i\in Q_0$ and $V_\alpha$ is the natural inclusion for all arrows $\alpha$ in $Q$.
\end{enumerate}

Let $W = (W_i)_{i \in Q_0}$ be a subrepresentation of $V$. Then either $W_i$ is $K^2$, or it is $0$, or it is a one dimensional subspace $\overline{(x,y)}$ of $K^2$. Suppose $W_i$ is one dimensional, then the subrepresentation conditions require that for each generator of this subspace and any $(a,b)^t\in W_i$ we have: 
\[  \begin{bsmallmatrix} 0 & 0 \\ 1 & 0\end{bsmallmatrix} (a,b)^t = (0, a) \in \overline{(x,y)}. \] 
Hence $\overline{(x,y)} = \overline{(0,1)}$. So all possible submodule Grassmannians are given by (a direct product of) subspaces, such that each subspace is uniquely determined by its dimension and can be either $\{ 0 \}, \overline{(0,1)}$ or $K^2$. Then for all  dimension vectors $\mathbf{e}$ each nonempty submodule Grassmannian $\mathrm{Gr}_{\mathbf{e}}(V)$, considered as an algebraic variety, is a direct product of points.
Thus $\chi ( \mathrm{Gr}_{\mathbf{e}} (V) )$ is 1 (see \cite[Section 2.3]{pla18}).
\end{proof}

In the next lemma, we use some notation introduced earlier on. In particular, $\mu(N)$ is as in Definition~\ref{defn_mu_module}, while by $c(D_N)$ we mean the set of cycles formed by edges of the double dimer cover $D_N$ and the weight $\textup{wt}(C)$ for a cycle $C$ in $c(D_N)$ is as in Definition~\ref{defn_weight_MOZ}.
    \begin{lemma}\label{lemma_tethas_behave}
    Let $\Ltil=\Lambda\otimes_K K[\epsilon]/(\epsilon^2)$ where $\Lambda$ is a Jacobian algebra coming from a triangulation (with no internal triangles) of a polygon.
Let $M$ be an indecomposable representation in $ \module \Lambda$ and $M\otimes_\Lambda \Ltil = \widetilde{M}$ be the corresponding induced representation in $ \module \Ltil$. Then, for any submodule $N$ of $\widetilde{M}$ in $ \module \Ltil$, we have that
\begin{align*}
    \mu (N)=\prod_{C\in c(D_N)} \wt(C).
\end{align*}
    \end{lemma}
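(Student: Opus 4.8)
The plan is to relate both sides of the identity to the combinatorics of the decomposition of $D_N \ominus D_{\textup{min}}$ established in Proposition~\ref{lemma_decomposition} and the submodule structure of $N$ described in Section~\ref{Sec:tensoring}. Recall from the remark following Proposition~\ref{lemma_decomposition} that, writing $D_N \ominus D_{\textup{min}}$ as a union of connected components $C_i$ each decomposed as $C_{i,1} \cup C_{i,2} \cup \dots \cup C_{i,m_i}$, the parts $\bigcup_i \bigcup_{j\geq 2} C_{i,j}$ correspond to $N\epsilon$ and $\bigcup_i C_{i,1}$ corresponds to $N/N\epsilon = F(N)$ as $\Lambda$-modules. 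On the other hand, the cycles $c(D_N)$ of the double dimer cover are precisely the boundary cycles of the connected components of $D_N \ominus D_{\textup{min}}$ that contain single edges, i.e.\ those components $C_i$ on which $D_N$ does not agree with $D_{\textup{min}}$ purely via double edges; equivalently, $c(D_N)$ is in bijection with the indecomposable summands of the string module $F(N)$. So the first step is to make this correspondence precise: each cycle $C \in c(D_N)$ encloses a maximal set of tiles which, under the bijection of Section~\ref{Sec:latticebij}, is the (abstract snake graph of the) underlying string of an indecomposable direct summand $F(N)_C$ of $F(N)$.

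The second step is to identify, for a single cycle $C \in c(D_N)$, its weight $\wt(C)$ with $\mu(F(N)_C)$. By Definition~\ref{defn_weight_MOZ}, $\wt(C)$ is the product $\theta_i \theta_j$ of the odd variables in the bottom-left and top-right corners of $C$, taken in the positive order; by Definition~\ref{defn_mu_module}, $\mu(F(N)_C) = \theta_\Delta \theta_{\Delta'}$ where $\Delta, \Delta'$ are the two triangles in $\Delta(T)$ containing the indecomposable $F(N)_C$, taken in the $\Delta$-positive order. By Proposition~\ref{prop_orderings_equal} the two orderings coincide, so it suffices to check that the two triangles coincide as unordered pairs. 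For this I would use the geometric model: the cycle $C$ in the snake graph encloses exactly the tiles crossed by the arc corresponding to $F(N)_C$, and the bottom-left (resp.\ top-right) corner of $C$ is labelled by the $\mu$-invariant of the first (resp.\ last) triangle crossed by that arc — this is exactly how the corner labels of a tile are assigned at the start of Subsection~``Snake graphs and double dimer covers''. By Proposition~\ref{prop:triangle}, these first and last triangles are precisely the two triangles of $\Delta(T)$ to which the corresponding indecomposable object belongs. Hence $\wt(C) = \mu(F(N)_C)$.

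The third and final step is to assemble these pieces:
\begin{align*}
\prod_{C \in c(D_N)} \wt(C) = \prod_{C \in c(D_N)} \mu(F(N)_C) = \mu\!\left(\bigoplus_{C} F(N)_C\right) = \mu(F(N)) = \mu(N),
\end{align*}
where the second equality is the multiplicativity of $\mu$ over direct sums (Definition~\ref{defn_mu_module}, together with Remark~\ref{remark_summands_ordering} which guarantees the product is well defined independently of how the summands are listed), the third equality uses that the cycles of $D_N$ enumerate exactly the indecomposable summands of $F(N)$ (from step one), and the last is the definition $\mu(N) = \mu(F(N))$. The main obstacle I anticipate is the careful bookkeeping in step one: one must verify that the decomposition of $D_N \ominus D_{\textup{min}}$ into cycles-with-single-edges matches the decomposition of $F(N) = N/\widetilde{N\epsilon}$ into indecomposables, and in particular that a component which is ``all double edges'' (contributing to $N\epsilon$) contributes no cycle to $c(D_N)$ and no summand to $F(N)$ — this is exactly the dichotomy already spelled out in the proof of Proposition~\ref{lemma_decomposition}, so the argument should go through, but it requires stating the correspondence at the level of individual components rather than just dimension vectors.
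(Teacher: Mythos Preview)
Your overall strategy matches the paper's: you want to set up a bijection between the cycles in $c(D_N)$ and the indecomposable summands of $F(N)$, check that for a single cycle the two $\theta$'s agree via Proposition~\ref{prop:triangle} and Proposition~\ref{prop_orderings_equal}, and then multiply using Remark~\ref{remark_summands_ordering}. Steps two and three are correct and are exactly what the paper does.

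The gap is in your step one. First, $N/N\epsilon$ is \emph{not} equal to $F(N)=N/\widetilde{N\epsilon}$: the former has dimension $\dim N-\dim N\epsilon$ while the latter has dimension $\dim N-2\dim N\epsilon$. Second, the cycles in $c(D_N)$ are \emph{not} the boundaries $C_{i,1}$ of the connected components of $D_N\ominus D_{\textup{min}}$. Take the five-tile example following Proposition~\ref{lemma_decomposition}: there $D_N\ominus D_{\textup{min}}$ has a single connected component (enclosing all five tiles), so there is only one $C_{i,1}$ and $N/N\epsilon$ is indecomposable; but $c(D_N)$ consists of \emph{three} disjoint cycles (around tiles $1$, $3$, $5$) and $F(N)=S_1\oplus S_3\oplus S_5$ has three summands. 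So the route through Proposition~\ref{lemma_decomposition} and the remark after it does not give the bijection you need.

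The paper's proof sidesteps this by working with the face function of $(D_N\ominus D_{\textup{min}})^c$ directly: the entries equal to~$1$ are precisely the tiles with single boundary edges, i.e.\ the tiles enclosed by cycles of $D_N$ (not of the symmetric difference), and two adjacent value-$1$ entries lie in the same cycle. Since the value-$1$ entries give exactly the support of $F(N)$, this yields the desired bijection between $c(D_N)$ and the indecomposable summands of $F(N)$. Once you replace your step one by this argument, the rest of your proof goes through unchanged.
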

    \begin{proof}
        By Proposition \ref{prop:submod--dimer}, the module $N$ corresponds to a double dimer cover $D_N$ of the snake graph associated to $M$. Moreover, the face function $h((D_N\ominus D_{\textup{min}})^c)$ gives the dimension vector of $N$, by Proposition \ref{prop:dimer--submod}. Since we are in the type $A$ setup, entries in the face function can only have values $0,1$ or $2$. The entries with value $2$ are exactly those giving the dimension vector of the largest induced submodule of $N$, and this is $N\epsilon\otimes\widetilde{\Lambda}$ by Section~\ref{Sec:tensoring}. On the other hand, the entries of value $1$ are exactly those giving $F(N):=N/(N\epsilon\otimes\widetilde{\Lambda})$ and this is a module in $ \module \Lambda$ by Section~\ref{Sec:tensoring}.
        
        Note that, by definition, an entry is $1$ in the face function exactly when the corresponding tile has single boundary edges. Moreover, since $D_{\textup{min}}$ consists of double edges, the single edges in $D_N\ominus D_{\textup{min}}$ are exactly those in $D_N$ and the completion of this symmetric difference does not affect boundary edges. Hence, the entries of value $1$ correspond to the tiles enclosed in cycles in $D_N$ and it is easy to see that two adjacent entries in the face function are $1$ if and only if the corresponding tiles are enclosed by the same cycle.
        
        Then, each indecomposable summand $A$ of $F(N)$, corresponds both to an arc $\gamma_A$ in the triangulated polygon (since it is an indecomposable module over $\Lambda$) and to a cycle $C_A$ in $c(D_N)$. By Proposition \ref{prop:triangle}, $A$ belongs to exactly two triangles in $\Delta(T)$: the first and last triangle $\gamma_A$ passes through. Then, $\mu(A)$ is the product in the $\Delta$-positive ordering of the two thetas corresponding to these two triangles and $\mu(N)=\mu(F(N))=\mu(\oplus A_i)=\prod\mu(A_i)$ by Definition~\ref{defn_mu_module}.
        
        On the other hand, for $C_A\in c(D_N)$, its weight wt$(C_A)$ is defined to be the product, in the positive ordering, of the first and last thetas appearing in $C_A$. When looking at the corresponding arc $\gamma_A$ these are the $\mu$-invariants associated to the first and last triangle the arc $\gamma_A$ crosses.
        The result then follows by recalling that the two orderings coincide by Proposition \ref{prop_orderings_equal} and the order of multiplication of the wt$(C_{A_i})$'s and the $\mu(A_i)$'s does not matter by Remark \ref{remark_summands_ordering}.
    \end{proof}

    \begin{notation}\label{notation_mu}
    Let $\Ltil=\Lambda\otimes_K K[\epsilon]/(\epsilon^2)$ where $\Lambda$ is a Jacobian algebra coming from a triangulation (with no internal triangles) of a polygon. Let $M$ be an indecomposable representation in $\text{mod}\,\Lambda$ and $M\otimes_\Lambda \Ltil = \widetilde{M}$ be the corresponding induced representation in $ \module \Ltil$.
    By Lemma~\ref{lemma2:grassmannians computation equal 1}, for each dimension vector $\textbf{e}\in\mathbb{Z}^n$, there is at most one submodule $N$ of $\widetilde{M}$ with dimension vector $\textbf{e}$. Then define $\mu_{\textbf{e}}:=\mu(N)$ if such a submodule $N$ exists and $\mu_{\textbf{e}}:=0$ otherwise.
    \end{notation}

   We now present a super-analogue of the CC-map (we recalled the classic one in Section \ref{subsect: CC map}) and prove it is an alternative way to compute the super lambda lengths for type $A$. In other words, we prove this gives the same result as \cite[Theorem 6.2]{musiker21}, also recalled in Theorem~\ref{thm_MOZ_formula}, and hence the same result as recursively applying the super Ptolemy relations, recalled in Section~\ref{subsec-super-L-len}.

    \begin{theorem}\label{thm_superCC}
      Let $\Ltil=\Lambda\otimes_K K[\epsilon]/(\epsilon^2)$ where $\Lambda$ is a Jacobian algebra coming from a triangulation $T$ (with no internal triangles) of an $(n+3)$-gon. For an arc $\gamma$ in the polygon that is not in $T$, let $\widetilde{M_\gamma}$ be the corresponding indecomposable induced module in $\mathrm{mod} \,\Ltil$. Then, the corresponding super lambda length is
      \begin{align*}
         x_\gamma  
&= X^{\mathrm{ind}_{\widetilde{\Lambda}} (\widetilde{M}_\gamma)  } \sum\limits_{\mathbf{e}\, \in \mathbb{Z}^n }\chi (\mathrm{Gr}_{\mathbf{e}}( \widetilde{M}_\gamma ) )  \prod\limits_{i=1}^n \sqrt{x_i}^{\langle S_i , \oplus_j S_j^{m_j} \rangle_{\widetilde{\Lambda}}}\mu_{\mathbf{e}},
      \end{align*}
      
where $\mathbf{e}  = \underline{\mathrm{dim}} ( \bigoplus_j  S_j^{m_j})$ and $\langle - , - \rangle_{\Ltil}$ is the antisymmetrized bilinear form from Definition \ref{def:bilinear form}.  \end{theorem}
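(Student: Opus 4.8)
The plan is to reduce the super CC-formula to the Musiker--Ovenhouse--Zhang snake graph formula from Theorem~\ref{thm_MOZ_formula}, by establishing a term-by-term correspondence between the summands indexed by dimension vectors $\mathbf{e}$ on the representation-theoretic side and the summands indexed by double dimer covers $D\in\mathcal{DD}(\mathcal{G}_\gamma)$ on the combinatorial side. The bridge is the lattice bijection of Theorem~\ref{thm_lattice_bijection_double}: each double dimer cover $D$ corresponds to a unique submodule $N=N_D$ of $\widetilde{M_\gamma}$, and by Lemma~\ref{lemma2:grassmannians computation equal 1} each nonempty $\mathrm{Gr}_{\mathbf{e}}(\widetilde{M_\gamma})$ consists of a single point, so in fact each $\mathbf{e}$ appearing in the sum corresponds to exactly one submodule $N$ with $\underline{\dim}(N)=\mathbf{e}$, hence to exactly one double dimer cover. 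Thus both sums have the same index set, and it remains to match the individual terms.

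First I would fix the arc $\gamma\notin T$ and write out the MOZ formula as $x_\gamma=\frac{1}{\mathrm{cross}(\gamma)}\sum_{D\in\mathcal{DD}(\mathcal{G}_\gamma)}\mathrm{wt}_2(D)$, and for each $D=D_N$ split $\mathrm{wt}_2(D_N)=\big(\prod_{a\in D_N}\sqrt{x_a}\big)\prod_{C\in c(D_N)}\mathrm{wt}(C)$ into its edge component and its cycle component, following Definition~\ref{defn_weight_MOZ}. The edge component I would handle by multiplying and dividing by $\mathrm{wt}_2(D_{\textup{min}})$: Lemma~\ref{lemma_index_double} gives $\mathrm{wt}_2(D_{\textup{min}})/\mathrm{cross}(\gamma)=X^{\mathrm{ind}_{\widetilde{\Lambda}}(\widetilde{M_\gamma})}$, and Lemma~\ref{lemma_weight_edge_component} gives $\big(\prod_{a\in D_N}\sqrt{x_a}\big)/\mathrm{wt}_2(D_{\textup{min}})=\prod_{i=1}^n\sqrt{x_i}^{\,\langle S_i,\oplus_j S_j^{m_j}\rangle}$ where $\underline{\dim}(N)=\underline{\dim}(\oplus_j S_j^{m_j})=\mathbf{e}$. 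So the edge part of each MOZ term equals exactly $X^{\mathrm{ind}_{\widetilde{\Lambda}}(\widetilde{M_\gamma})}\prod_{i=1}^n\sqrt{x_i}^{\,\langle S_i,\oplus_j S_j^{m_j}\rangle}$, which is precisely the nonsuper part of the $\mathbf{e}$-summand in the claimed formula.

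Next I would match the cycle component to $\mu_{\mathbf{e}}$. By Notation~\ref{notation_mu}, $\mu_{\mathbf{e}}=\mu(N)$ for the unique submodule $N$ with $\underline{\dim}(N)=\mathbf{e}$ (and $\mu_{\mathbf{e}}=0$ if no such submodule exists, which corresponds to $\mathrm{Gr}_{\mathbf{e}}(\widetilde{M_\gamma})=\varnothing$, so the $\chi$-factor already kills that summand and there is nothing to check there). Lemma~\ref{lemma_tethas_behave} states exactly that $\mu(N)=\prod_{C\in c(D_N)}\mathrm{wt}(C)$, so the cycle component of the MOZ term is $\mu_{\mathbf{e}}$. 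Combining the two components, each $D_N$-summand of the MOZ formula equals $X^{\mathrm{ind}_{\widetilde{\Lambda}}(\widetilde{M_\gamma})}\,\chi(\mathrm{Gr}_{\mathbf{e}}(\widetilde{M_\gamma}))\prod_{i=1}^n\sqrt{x_i}^{\,\langle S_i,\oplus_j S_j^{m_j}\rangle}\mu_{\mathbf{e}}$ with $\mathbf{e}=\underline{\dim}(N)$, since $\chi(\mathrm{Gr}_{\mathbf{e}}(\widetilde{M_\gamma}))=1$ by Lemma~\ref{lemma2:grassmannians computation equal 1}. Summing over all $D\in\mathcal{DD}(\mathcal{G}_\gamma)$, equivalently over all $\mathbf{e}$ with nonempty Grassmannian, and then extending the sum over all $\mathbf{e}\in\mathbb{Z}^n$ harmlessly (the extra terms vanish), yields the asserted identity by Theorem~\ref{thm_MOZ_formula}.

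The one point that requires genuine care, rather than quoting a lemma, is the bookkeeping of signs and ordering in the cycle/odd part: the $\mu$-invariants anticommute, so a priori the product $\prod_{C\in c(D_N)}\mathrm{wt}(C)$ and the product $\mu(F(N))=\prod_i\mu(A_i)$ over indecomposable summands $A_i$ of $F(N)$ could differ by a sign. This is exactly where Proposition~\ref{prop_orderings_equal} (the $\Delta$-positive ordering equals the positive ordering) and Remark~\ref{remark_summands_ordering} (each $\mu(A_i)$ is a product of two $\theta$'s, hence these factors commute with one another, so the order of the summands is irrelevant) are needed, and Lemma~\ref{lemma_tethas_behave} already packages this; so in the write-up I would simply invoke that lemma but flag that it is the substantive input. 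Everything else is a direct substitution, so the proof is short: assemble Lemmas~\ref{lemma_index_double}, \ref{lemma_weight_edge_component}, \ref{lemma2:grassmannians computation equal 1}, \ref{lemma_tethas_behave}, Theorem~\ref{thm_lattice_bijection_double} and Theorem~\ref{thm_MOZ_formula}, and conclude.
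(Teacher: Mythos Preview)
Your proposal is correct and follows essentially the same route as the paper: reduce the stated formula to the Musiker--Ovenhouse--Zhang expansion of Theorem~\ref{thm_MOZ_formula} via the lattice bijection of Theorem~\ref{thm_lattice_bijection_double}, handle the edge part of $\mathrm{wt}_2(D_N)$ with Lemmas~\ref{lemma_index_double} and~\ref{lemma_weight_edge_component}, the cycle part with Lemma~\ref{lemma_tethas_behave}, and collapse the Euler characteristic using Lemma~\ref{lemma2:grassmannians computation equal 1}. The only ingredient you did not explicitly name is Lemma~\ref{lemma_bilinear_equal}, which the paper invokes to pass between $\langle -,-\rangle_{\widetilde{\Lambda}}$ in the statement and $\langle -,-\rangle_{\Lambda}$ appearing (implicitly) in Lemma~\ref{lemma_weight_edge_component}; this is harmless but worth citing for completeness.
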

  
    \begin{proof}
    First, we simplify the stated expression, that is we prove that
    \begin{align*}
         X^{\mathrm{ind}_{\widetilde{\Lambda}} (\widetilde{M}_\gamma)  } \sum\limits_{\mathbf{e}\, \in \mathbb{Z}^n }\chi (\mathrm{Gr}_{\mathbf{e}}( \widetilde{M}_\gamma ) )  \prod\limits_{i=1}^n \sqrt{x_i}^{\langle S_i , \oplus_j S_j^{m_j} \rangle_{\widetilde{\Lambda}}}\mu_{\mathbf{e}}=X^{\mathrm{ind}_{\Lambda} (M_\gamma)  } \sum\limits_{\substack{N\subseteq \widetilde{M}_\gamma \\\textup{dim}(N)=\mathbf{e}}}  \prod\limits_{i=1}^n \sqrt{x_i}^{\langle S_i , \oplus_j S_j^{m_j} \rangle_{\Lambda}}\mu(N).
         \end{align*}
  In fact, $\textup{ind}_{\widetilde{\Lambda}} (\widetilde{M}_\gamma)=\textup{ind}_{\Lambda} (M_\gamma)$ by Lemma~\ref{lem:index}, $\chi (\mathrm{Gr}_{\mathbf{e}}( \widetilde{M}_\gamma ) ) =1$ if there exists exactly one submodule $N$ of $\widetilde{M}_\gamma$ with dimension vector $\mathbf{e}$ and $0$ otherwise by Lemma~\ref{lemma2:grassmannians computation equal 1}, the antisymmetrized bilinear forms coincide on simple modules by Lemma~\ref{lemma_bilinear_equal} and $\mu_{\mathbf{e}}=\mu(N)$ whenever a submodule $N$ with dimension vector $\textbf{e}$ exists by Notation~\ref{notation_mu}.

    We now prove that the simplified expression agrees with \cite[Theorem 6.2]{musiker21} and hence the formula computes the super lambda length associated to $\gamma$. Let $\mathcal{G}_\gamma$ be the snake graph associated to the arc $\gamma$ and recall that the submodules $N$ of $\widetilde{M}_\gamma$ are in bijection with the double dimer covers $D_N$ of $\mathcal{G}_\gamma$ by Theorem~\ref{thm_lattice_bijection_double}. Combining Lemma~\ref{lemma_index_double} and
     Lemma~\ref{lemma_weight_edge_component}, for a double dimer cover $D_N$ we have that 
    \begin{align*}
        \dfrac{ \prod\limits_{a \in D_N} \sqrt{x_a}}{\textup{cross}(\gamma)}=
        \dfrac{ \textup{wt}_2(D_{\textup{min}})}{\textup{cross}(\gamma)}\dfrac{ \prod\limits_{a \in D_N} \sqrt{x_a}}{\textup{wt}_2(D_{\textup{min}})}
        =X^{\textup{ind}_{\Lambda}(M_\gamma)} \prod\limits_{i=1}^n \sqrt{x_i}^{\langle S_i , \oplus_j S_j^{m_j}\rangle_\Lambda},
    \end{align*}
    where $\underline{\textup{dim}}\,(N)=\underline{\textup{dim}}(\oplus_j S_j^{m_j})$.
    Moreover, by Lemma~\ref{lemma_tethas_behave}, we have that
    \begin{align*}
    \mu (N)=\prod_{C\in c(D_N)} \wt(C).
    \end{align*}
    Hence, taking the sum over all the possible submodules $N$ of $\widetilde{M}_\gamma$ or over all possible double dimer covers $D_N$ of $\mathcal{G}_\gamma$ respectively in the two formulae, we see that the second expression in the statement coincides with \cite[Theorem 6.2]{musiker21}.
    \end{proof}

\begin{remark}\label{remark_simplified_form}
    As pointed out in the proof of Theorem~\ref{thm_superCC}, the expression for the super lambda length of an arc $\gamma$ in a triangulated polygon can be simplified to
      \begin{align*}
         x_\gamma  
&= X^{\mathrm{ind}_{\Lambda} (M_\gamma)  } \sum\limits_{\substack{N\subseteq \widetilde{M}_\gamma \\\textup{dim}(N)=\mathbf{e}}}  \prod\limits_{i=1}^n \sqrt{x_i}^{\langle S_i , \oplus_j S_j^{m_j} \rangle_{\Lambda}}\mu(N),
      \end{align*}
      where $\mathbf{e}  = \underline{\mathrm{dim}} ( \bigoplus_j  S_j^{m_j})$.
      Note that in this version both the index and the antisymmetrized bilinear form are computed over the algebra $\Lambda$.
\end{remark}

\begin{definition}\label{defn_superCC} Let $\Ltil=\Lambda\otimes_K K[\epsilon]/(\epsilon^2)$ where $\Lambda$ is a Jacobian algebra coming from a triangulation $T$ (with no internal triangles) of an $(n+3)$-gon and $\mathcal{A}$ is the super algebra associated to $T$. 
    The \emph{cluster character} of $E$, where $E$ is either an induced $\Ltil$-module or a shifted projective, 
    is defined as the map $CC$ with values in $\mathcal{A}$ as follows.
    \begin{itemize}
        \item If $E=\widetilde{M}_\gamma$ is an indecomposable induced module in $\module\Ltil$, then

        \[CC(E):=X^{\mathrm{ind}_{\widetilde{\Lambda}} (E)  } \sum\limits_{\mathbf{e}\, \in \mathbb{Z}^n }\chi (\mathrm{Gr}_{\mathbf{e}}( E ) )  \prod\limits_{i=1}^n \sqrt{x_i}^{\langle S_i , \oplus_j S_j^{m_j} \rangle_{\widetilde{\Lambda}}}\mu_{\mathbf{e}}, \]

        where $\mathbf{e}  = \underline{\mathrm{dim}} ( \bigoplus_j  S_j^{m_j})$. That is, $CC(E)=x_\gamma$ as computed in Theorem~\ref{thm_superCC}.
        \item If $E=P_i[1]$ is an indecomposable shifted projective as in Section~\ref{Sec:tensoring}, then $CC(E):=x_i$.

        \item If $E=\oplus_{i=1}^r E_i$, where each $E_i$ is either an indecomposable induced module in $\module\Ltil$ or an indecomposable shifted projective, then $CC(E):=\prod_{i=1}^r CC(E_i)$.
    \end{itemize}
\end{definition}

\begin{remark}
    It would be tempting to define the cluster character of a decomposable induced module using a similar formula to the one for an indecomposable one. However, as we show in the following example, this would not give a multiplicative $CC$-map.
    
    Considering Example~\ref{eg_A_2_sectional}, we have that

    \[CC(\begin{smallmatrix}
        1\\1
    \end{smallmatrix})=x_1^{-1}(1+\sqrt{x_1} \theta_2\theta_1+x_1),
    \]

    where the summands correspond to the submodules $0$, $1$ and $\begin{smallmatrix}
        1\\1
    \end{smallmatrix}$ respectively, and $\theta_2\theta_1=\mu(1)=\mu_{\mathbf{(1,0)}}$. Then, noting that $\mu(1)^2=0$, we have that 

    \[CC(\begin{smallmatrix}
        1\\1
    \end{smallmatrix})\cdot CC(\begin{smallmatrix}
        1\\1
    \end{smallmatrix})=
    x_1^{-2}(1+2\sqrt{x_1} \theta_2\theta_1+2 x_1+2x_1\sqrt{x_1}\theta_2\theta_1+x_1^2).
    \]

    On the other hand, applying a formula involving the Euler characteristic to the decomposable module $\begin{smallmatrix}
        1\\1
    \end{smallmatrix} \oplus \begin{smallmatrix}
        1\\1
    \end{smallmatrix}$ would give us a term with coefficient $3$ corresponding to $\mathbf{e}=(2,0)$, and hence $CC(\begin{smallmatrix}
        1\\1
    \end{smallmatrix}\oplus \begin{smallmatrix}
        1\\1
    \end{smallmatrix})\neq CC(\begin{smallmatrix}
        1\\1
    \end{smallmatrix})\cdot CC(\begin{smallmatrix}
        1\\1
    \end{smallmatrix})$. This problem is caused by the fact that $\mathbf{e}=(2,0)$ corresponds both to the submodule $\begin{smallmatrix}
        1\\1
    \end{smallmatrix}$ with $\mu$-term equal to 1 and the submodule $1\oplus 1$ with $\mu$-term equal to 0.

    Alternatively, we could define the cluster character of a decomposible induced module $E=\oplus_{i=1}^r \widetilde{M_i}$ as 

    \[CC(E)=X^{\mathrm{ind}_{\widetilde{\Lambda}} (E)  } \sum\limits_{\substack{(N,\iota)\\ \iota: N\hookrightarrow E}}  \prod\limits_{i=1}^n \sqrt{x_i}^{\langle S_i , \oplus_j S_j^{m_j} \rangle_{\widetilde{\Lambda}}}\mu(N), \]
    
    where $\underline{\mathrm{dim}}(N)  = \underline{\mathrm{dim}} ( \bigoplus_j  S_j^{m_j})$. Here the sum runs over submodules $N$ of $E$ together with a choice of an embedding $\iota: N\hookrightarrow E$ up to isomorphisms.  Submodules $N$ of $E$ are sums of submodules $N_i$'s of $\widetilde{M_i}$'s. Note that some of the $N_i$'s might be submodules of more than one summand of $E$ and that, as in the above example, the same dimension vector could correspond to different submodules of $E$. This alternative formula takes both of these facts into account. Moreover, it could easily be generalised to allow summands of $E$ to be shifted projectives.
\end{remark}

Our CC-map recovers the combinatorial formula for the super lambda lengths and since super lambda lengths respect super Ptolemy relations, we obtain the following result.

\begin{corollary}\label{corollary_super_CC_ptolemy}
 Let $\Ltil=\Lambda\otimes_K K[\epsilon]/(\epsilon^2)$ where $\Lambda$ is a Jacobian algebra coming from a triangulation $T$ (with no internal triangles) of an $(n+3)$-gon. 
If,  for $i=1,\dots, 6$, $\gamma_i$ are arcs as in Figure~\ref{fig:corollary_super_ptolemy}, let $\widetilde{M_i}$ be the corresponding indecomposable induced module in $\mathrm{mod} \,\Ltil$ or the corresponding shifted projective $P_i[1]$ if $\gamma_i$ is in $T$, then 
 \[CC(\widetilde{M_1})\cdot CC(\widetilde{M_2})= CC(E)+CC(E')+\sqrt{CC(E'')} \sigma \theta\] 
 where $E=\widetilde{M_3}\oplus \widetilde{M_5}$, $E'=\widetilde{M_4}\oplus \widetilde{M_6}$, $E''=E\oplus E'$, $\sigma$ and $\theta$ are the $\mu$ invariants associated to the triangles in the figure.

\begin{figure}[h]
    \centering
 \begin{tikzpicture}
 \begin{scope}
   \node[draw,minimum size=3cm,regular polygon,regular polygon sides=3] (a) at (0,0) {};
  \draw (a.side 3) node[right]{$\gamma_4$};
  \draw (a.side 2) node[below right]{$\gamma_1$};
  \draw (a.side 1) node[left]{$\gamma_3$};
  \draw (a.center) node[right]{$\theta$};
  \node[rotate=180, draw,minimum size=3cm,regular polygon,regular polygon sides=3,anchor= side 2] (b) at (a.side 2) {};
  \draw (b.side 3) node[left]{$\gamma_6$};
  \draw (b.side 1) node[right]{$\gamma_5$};
  \draw (b.center) node[right]{$\sigma$};
  \draw (a.corner 1)--node[above left] {$\gamma_2$}(b.corner 1);
  \draw (a.corner 3)--(a.corner 2) node[currarrow,
    pos=0.4, 
    xscale=1,
    sloped]{};
    \draw (b.corner 1)--(a.corner 1) ;
 \end{scope}
 \end{tikzpicture}
 \caption{The CC-map respects Ptolemy relations.}\label{fig:corollary_super_ptolemy}
\end{figure}
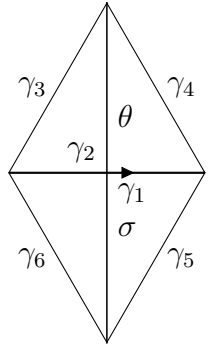

\end{corollary}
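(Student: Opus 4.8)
The plan is to deduce the statement directly from the identification of the super CC-map with super lambda lengths (Theorem~\ref{thm_superCC}) together with the multiplicativity built into Definition~\ref{defn_superCC}, so that the asserted equality becomes literally one of the super Ptolemy relations recalled in Section~\ref{subsec-super-L-len}.

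First I would unwind all the terms. By the last clause of Definition~\ref{defn_superCC} we have $CC(E) = CC(\widetilde{M_3})\cdot CC(\widetilde{M_5})$, $CC(E') = CC(\widetilde{M_4})\cdot CC(\widetilde{M_6})$, and $CC(E'') = CC(E)\cdot CC(E')$, whence $\sqrt{CC(E'')} = \sqrt{CC(E)\,CC(E')}$. For each index $i$, Theorem~\ref{thm_superCC} gives $CC(\widetilde{M_i}) = x_{\gamma_i}$ when $\gamma_i \notin T$, while if $\gamma_i \in T$ the symbol $\widetilde{M_i}$ denotes the shifted projective $P_i[1]$ and $CC(P_i[1]) := x_i = x_{\gamma_i}$ by Definition~\ref{defn_superCC}; in the degenerate case where $\gamma_i$ is a boundary segment one has $x_{\gamma_i} = 1$. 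Substituting, the claimed identity is equivalent to
\[ x_{\gamma_1} x_{\gamma_2} = x_{\gamma_3} x_{\gamma_5} + x_{\gamma_4} x_{\gamma_6} + \sqrt{x_{\gamma_3} x_{\gamma_4} x_{\gamma_5} x_{\gamma_6}}\,\sigma\theta , \]
an identity in the super algebra $\mathcal{A}$ between the even variables $x_{\gamma_i}$ and the odd variables $\sigma,\theta$.

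Next I would match Figure~\ref{fig:corollary_super_ptolemy} with the flip of Figure~\ref{fig:super_ptolemy}. By construction $\gamma_1$ and $\gamma_2$ are the two diagonals of a quadrilateral with sides $\gamma_3,\gamma_4,\gamma_5,\gamma_6$, and $\gamma_2$ is obtained from the oriented arc $\gamma_1$ by the $\pi/2$ rotation defining the flip; the pair $\{\gamma_3,\gamma_5\}$ (resp. $\{\gamma_4,\gamma_6\}$) is a pair of opposite sides, so with the dictionary $\gamma_1 = e$, $\gamma_2 = f$, $\{\gamma_3,\gamma_5\} = \{a,c\}$, $\{\gamma_4,\gamma_6\} = \{b,d\}$, and $\theta,\sigma$ the $\mu$-invariants of the two triangles adjacent to $\gamma_1$, the first super Ptolemy relation $ef = ac + bd + \sqrt{acbd}\,\sigma\theta$ is exactly the displayed identity. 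Since the expansion formula of Theorem~\ref{thm_MOZ_formula} — equivalently our Theorem~\ref{thm_superCC} — computes genuine super lambda lengths, and these satisfy the super Ptolemy relations under flips by construction in decorated super Teichm\"uller theory, the identity holds and the corollary follows.

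The only step that is not purely formal is the combinatorial bookkeeping in this last matching: one must be sure that $E$ and $E'$ select the two pairs of \emph{opposite} sides of the quadrilateral and are not mixed, and that $\sigma$ and $\theta$ are attached to the correct triangles in the correct (positive) order $\sigma > \theta$. Here it helps that on a disk with no internal triangles there is a unique spin structure and the default orientation of Definition~\ref{defn_default_orientation} fixes the order of the $\mu$-invariants; moreover the product $\sigma\theta$ occurring in the even-variable expansion of $x_{\gamma_2}$ is invariant under changing the representative orientation, since $\sigma\theta = \sigma'\theta'$ after a flip. Granting these conventions, all cases (some $\gamma_i$ internal to $T$, some a boundary segment) are covered, and no further computation is needed.
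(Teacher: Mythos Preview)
Your proposal is correct and follows essentially the same approach as the paper, which does not give a detailed proof but simply notes before the corollary that ``our CC-map recovers the combinatorial formula for the super lambda lengths and since super lambda lengths respect super Ptolemy relations, we obtain the following result.'' You have spelled out explicitly the unwinding via Definition~\ref{defn_superCC} and the matching with Figure~\ref{fig:super_ptolemy}, which the paper leaves implicit.
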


We conclude by illustrating our result in the running example.

\begin{example}
    Consider the triangulated pentagon from Figure~\ref{fig:triang_polygon_eg} with arc $\gamma$ indicated in red. The corresponding indecomposable $kA_2$-module is $M_\gamma=\begin{smallmatrix}1\\2\end{smallmatrix}$ with indecomposable induced $\widetilde{kA_2}$-module
$\widetilde{M_\gamma}=\begin{smallmatrix}&1&\\1&&2\\&2&\end{smallmatrix}$, see Example~\ref{example_FN}. The submodule lattice of $\widetilde{M_\gamma}$ is illustrated in Figure~\ref{fig:SG_SM}. We apply the simplified formula from Remark~\ref{remark_simplified_form} to compute the super lambda length $x_\gamma$.
First, note that $M_\gamma=\begin{smallmatrix}1\\2\end{smallmatrix}=I_2$ is an injective $kA_2$-module. Hence $\mathrm{ind}_{kA_2} (M_\gamma)=[0]-[I_2]$ and
\begin{align*}
   X^{\mathrm{ind}_{kA_2} (M_\gamma)}=\frac{1}{x_2}.
\end{align*}
The only nonzero Ext$^1$-group between simples is the $1$-dimensional group $\textup{Ext}^1_{kA_2}(S_1,S_2)$. Hence, applying the definition, we have
\begin{align*}
    \langle S_1,S_1\rangle_{kA_2} = \langle S_2,S_2\rangle_{kA_2} =0,\, \langle S_1,S_2\rangle_{kA_2} = -1, \, \langle S_2,S_1\rangle_{kA_2} =1.
\end{align*}
As shown in Example~\ref{eg_A_2_sectional}, $\mu(\begin{smallmatrix}1&&2\\&2&\end{smallmatrix})=\theta_2\theta_1$. Similarly, one can compute $\mu(\begin{smallmatrix}1\\2\end{smallmatrix})=\theta_3\theta_1$ and $\mu(\begin{smallmatrix} 2\end{smallmatrix})=\theta_3\theta_2$. Since the remaining three submodules $N$ of $\widetilde{M_\gamma}$ are induced modules, we have that $F(N)=0$ and $\mu(N)=1$. Applying the formula, we conclude that
\begin{align*}
    CC(\widetilde{M_\gamma})=x_\gamma =\frac{1}{x_2}\Big(\sqrt{x_1}^{\,-2}\sqrt{x_2}^2+\sqrt{x_1}^{\,-2}\sqrt{x_2}\theta_2\theta_1+\sqrt{x_1}^{\,-2}+\sqrt{x_1}^{\,-1}\sqrt{x_2}\theta_3\theta_1+\sqrt{x_1}^{\,-1}\theta_3\theta_2+1\Big).
\end{align*}
Simplifying and reordering the above expression, we can observe that, as expected, it coincides with the expression computed in Example~\ref{eg_double_dim} using Musiker, Ovenhouse and Zhang's formula. 
\end{example}

\bibliographystyle{alpha}
\bibliography{main.bib}

\end{document}